\newtheorem{theorem}{Theorem}[section]
\newtheorem{remark}{Remark}[section]
\newtheorem{lemma}[theorem]{Lemma}
\newtheorem{conjecture}[theorem]{Conjecture}
\newtheorem*{definition*}{Definition}
\newcommand{\R}{{\mathbb R}}
\def\N{\mathbb{N}}
\def\Z{\mathbb{Z}}
\def\spt{\text{spt}}
\begin{document}
\title{Discretized sum-product type problems: Energy variants and Applications}
\author{Quy Pham \thanks{Department of Mathematics, University of Rochester. Email: \href{mailto:qpham3@ur.rochester.edu}{qpham3@ur.rochester.edu}}\and Thang Pham \thanks{University of Science, Vietnam National University, Hanoi. Email: \href{mailto:phamanhthang.vnu@gmail.com}{phamanhthang.vnu@gmail.com}}\and Chun-Yen Shen\thanks{Department of Mathematics, National Taiwan University. Email: \href{mailto:cyshen@math.ntu.edu.tw}{cyshen@math.ntu.edu.tw}}}
\date{}
\maketitle

\begin{abstract}
In this paper, we provide estimates for the additive discretized energy of 
\[\sum_{c\in C} |\{(a_1, a_2, b_1, b_2)\in A^2\times B^2: |(a_1 +cb_1) - (a_2 + cb_2)|\le \delta\}|_{\delta},\] that depend on non-concentration conditions of the sets. Our proof follows the Guth-Katz-Zahl approach (2021) with appropriate changes along the way clarifying and optimizing many of the steps. Several applications will also be discussed.

\bigskip

\textbf{2020 Mathematics Subject Classification}: 68R05, 11B75
\end{abstract}

\tableofcontents

    
    
\section{Introduction}
Let $\delta, \sigma\in (0,1)$. A set $A\subset \mathbb{R}$ is called $\delta$-discretized if it is a union of closed intervals of length $\delta$. A $\delta$-discretized set $A$ is called a $(\delta, \sigma)$-set if $|A|\approx \delta^{1-\sigma}$ and it satisfies the non-concentration condition $|A\cap I|\lesssim |I|^\sigma|A|$ for all intervals $I$. Roughly speaking, we can consider a $(\delta, \sigma)$-set as a discrete analogue of a set of Hausdorff dimension $\sigma$.

Bourgain \cite{Bourgain2010} proved that any $(\delta, \sigma)$-set cannot be approximately closed under both addition and multiplication. More precisely, if $A\subset [1, 2]$ is a $(\delta, \sigma)$-set, then there exists $\epsilon=\epsilon(\sigma)>0$ such that 
\begin{equation}\label{SPE}\max \left\lbrace |A+A|, |A\cdot A|\right\rbrace\gtrsim \delta^{-\epsilon}|A|.\end{equation}
Here by $|X|$, we mean the Lebesgue measure of $X$. This result settles a conjecture of Katz and Tao in \cite{KatzTao} for $\sigma=1/2$. The recent work of Guth, Katz, and Zahl \cite{GKZ} provides a new proof of (\ref{SPE}) with explicit exponent $\epsilon$, namely, the estimate (\ref{SPE}) holds for any $0<\epsilon<\frac{\sigma(1-\sigma)}{4(7+3\sigma)}$. 


In this paper, we explore more deeper properties of discretized sum-product type problems with an emphasis on energy variants and applications of the $A+cB$ problem. Let us first start with the following theorem of Bourgain in \cite{Bourgain2010}. Recall that a set $A\subset \R$ is called $\delta$-separated if every two elements in $A$ have distance greater than $\delta$.
\begin{theorem}\label{bourgain} Given $\alpha \in (0,1)$ and $\gamma,\eta > 0$, there exist $\epsilon_{0},\epsilon > 0$, depending only on $\alpha, \gamma, \eta$, such that the following holds for all sufficiently small $\delta > 0$.

Let $\nu$ be a probability measure on $[0,1]$ satisfying $\nu(B(x,r)) \le r^{\gamma}$ for all $x \in \R$ and $\delta < r \le \delta^{\epsilon_{0}}$. Let additionally $A \subset [0,1]$ be a $\delta$-separated set with $|A|\ge \delta^{-\alpha}$, which also satisfies the non-concentration condition $|A \cap B(x,r)| \le r^{\eta}|A|$ for $x \in \R$ and $\delta \le r \le \delta^{\epsilon_{0}}$. 

Then, there exists a point $c \in \spt (\nu)$ such that
\begin{equation*} |A + cA|_{\delta} \ge \delta^{- \epsilon}|A|. \end{equation*}
Here $|\cdot|_{\delta}$ refers to the $\delta$-covering number of $A$, namely the size of the smallest covering of $A$ by intervals of length $\delta$. \end{theorem}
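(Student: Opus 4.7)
The plan is to argue by contradiction using the paper's main energy estimate. Suppose $|A + cA|_{\delta} < \delta^{-\epsilon} |A|$ for every $c \in \spt(\nu)$; the goal is to choose $\epsilon$ small enough that this contradicts the upper bound on the discretized additive energy announced in the abstract.

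The first step converts the covering assumption into an energy lower bound. For each fixed $c$, cover $A + cA$ by $|A + cA|_{\delta}$ intervals of length $\delta$, let $n_j$ count the pairs $(a, b) \in A^2$ with $a + cb$ lying in the $j$-th interval, and apply Cauchy--Schwarz to $\sum_j n_j = |A|^2$. This yields
\begin{equation*}
E(c) := \left|\left\{(a_1, a_2, b_1, b_2) \in A^4 : |(a_1 + c b_1) - (a_2 + c b_2)| \le \delta\right\}\right|_{\delta} \gtrsim \frac{|A|^4}{|A + cA|_{\delta}} \ge \delta^{\epsilon} |A|^3
\end{equation*}
for every $c \in \spt(\nu)$, with $|\cdot|_{\delta}$ comparable to cardinality thanks to the $\delta$-separation of $A$.

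Next, I would replace $\nu$ by a discrete set $C \subset \spt(\nu)$. Using the Frostman bound $\nu(B(x, r)) \le r^{\gamma}$ on $[\delta, \delta^{\epsilon_{0}}]$, a pigeonholing/Frostman-style argument (choose a dyadic scale where $\nu$ behaves like a $(\delta, \gamma)$-set) produces a $\delta$-separated $C$ of size $|C| \gtrsim \delta^{-\gamma + o(1)}$ satisfying $|C \cap B(x, r)| \lesssim r^{\gamma} |C|$ on the same range of scales. Summing the previous lower bound over $c \in C$ gives $\sum_{c \in C} E(c) \gtrsim \delta^{\epsilon} |C| |A|^{3}$. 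Invoking the paper's main energy theorem with $B = A$, the non-concentration of $A$ (exponent $\eta$, size $\ge \delta^{-\alpha}$) together with the non-concentration of $C$ (exponent $\gamma$) produces an upper bound of the form $\sum_{c \in C} E(c) \le \delta^{\epsilon'} |C| |A|^{3}$ with a quantitative gain $\epsilon' = \epsilon'(\alpha, \gamma, \eta) > 0$ independent of $\delta$. Choosing $\epsilon = \epsilon'/2$ and $\epsilon_{0}$ small enough to absorb the $\delta^{o(1)}$ losses incurred above contradicts the assumption, yielding the desired $c$.

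The main obstacle is the multiscale bookkeeping in matching the hypotheses of the energy theorem. The Frostman condition on $\nu$ and the non-concentration of $A$ are only available on $[\delta, \delta^{\epsilon_{0}}]$, whereas one must verify that the extracted $C$ retains a strictly positive non-concentration exponent across whatever dyadic range the main energy theorem demands. Equally delicate is tracking how the gain $\epsilon'$ depends quantitatively on $\alpha, \gamma, \eta$, so that both $\epsilon$ and $\epsilon_{0}$ can be chosen uniformly in $\delta$; this is essentially the reason Bourgain's original argument proceeds through a multiscale induction, and the explicit energy estimate of this paper is what replaces that induction with a single-step reduction.
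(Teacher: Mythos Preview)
The paper does not prove Theorem~\ref{bourgain}; it is quoted as Bourgain's 2010 result and used as a black box (most notably inside the proof of Theorem~\ref{thm_energy_I}). So there is no ``paper's own proof'' to match, and your plan to derive it from the paper's energy machinery would, if it worked, be a genuinely new route rather than a reconstruction.

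However, the proposed route has a real gap. You intend to invoke the paper's main energy theorem with $B=A$ (so $\beta=\alpha$) and then read off a gain $\epsilon'>0$. But the explicit estimate in Theorem~\ref{thm_energy_III} only gives
\[
K \gtrsim \min\{M_0,M_1,M_2,M_3,M_4\},
\]
and already $M_0=\delta^{-(\gamma-\beta)/4}$ is $\le 1$ whenever $\gamma<\beta=\alpha$. More structurally, the whole argument in Section~4 runs through the auxiliary set $D=(B'-b_1)\cap (b_2-b_3)C'$, which is nonempty only when $|C|>K^{4m_1}|B|$; with $B=A$ this forces $\gamma>\alpha$. Bourgain's theorem, by contrast, allows any $\gamma>0$, so the explicit energy theorem simply does not produce a nontrivial $\epsilon'$ across the full parameter range you need. (The paper's own applications in Section~5 reflect this: even for $A=B$ they require $\gamma>2\alpha$.)

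You might think of falling back on the qualitative energy bound, Theorem~\ref{thm_energy_I}, which does assert $K\ge \delta^{-\epsilon}$ for all $\gamma>0$. But that theorem is proved \emph{using} Theorem~\ref{bourgain}, so invoking it here would be circular. In short, the Cauchy--Schwarz reduction from covering to energy is fine, but the paper's energy estimates are not strong enough, in the regime $\gamma\le\alpha$, to close the loop and recover Bourgain's theorem; the dependency runs the other way.
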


In the above theorem, $c\in \spt(\nu)$. If $C$ is a $\delta$-separated set satisfying $|C \cap B(x,r)| \leq r^{\gamma}|C|$ for all $x \in \R$ and $\delta \leq r \leq \delta^{\epsilon_{0}}$, we choose the uniformly distributed probability measure $\nu$ on the $\delta$-neighbourhood of $C$ such that $\nu(B(x,r)) \lesssim r^{\gamma}$, namely, $\nu(X)=\frac{|X\cap C(\delta)|}{|C|_{\delta}}.$
The theorem above tells us that there exists $c\in \spt(\nu)$ such that 
\[|A+cA|_\delta\ge \delta^{-\epsilon}|A|.\]
This implies that there exists $c\in C$ with $|A + cA|_{\delta} \geq \delta^{- \epsilon}|A|$. This can be explained as follows. Assume $c_\nu$ is such an element in $\spt(\nu)$ and $c\in C$ is an element such that $|c-c_\nu|\le \delta$. We observe that if $\mathcal{C}$ is a covering of $A+cA$ by $\delta$-balls, then for each ball in $\mathcal{C}$, adding two translations to the left and to the right by $\delta$, we would have a covering of $A+c_\nu A$. This gives the desired conclusion. 

Orponen \cite{O21} recently obtained a stronger result that extends Bourgain's result for different sets.
\begin{theorem}\label{thmO1} Let $0 < \beta \le \alpha < 1$ and $\eta > 0$. Then, for every $\gamma \in ((\alpha - \beta)/(1 - \beta),1]$, there exist $\epsilon,\epsilon_{0},\delta_{0} \in (0,\tfrac{1}{2}]$, depending only on $\alpha,\beta,\gamma, \eta$, such that the following holds. Let $\delta \in 2^{-\N}$ with $\delta \in (0,\delta_{0}]$, and let $A,B \subset \delta \Z \cap [0,1]$ satisfy the following hypotheses:
\begin{enumerate}
\item[(A)] \label{A} $|A| \le \delta^{-\alpha}$.
\item[(B)] \label{B} $|B| \ge \delta^{-\beta}$, and $B$ satisfies the following Frostman (non-concentration) condition: 
\begin{displaymath} 
|B \cap B(x,r)| \le r^{\eta}|B|, ~~ \forall x\in \R,~ \delta \le r \le \delta^{\epsilon_{0}}. 
\end{displaymath} 
\end{enumerate}
Further, let $\nu$ be a Borel probability measure with $\spt (\nu) \subset [\tfrac{1}{2},1]$, satisfying the Frostman condition $\nu(B(x,r)) \le r^{\gamma}$ for $x \in \R$ and $\delta \le r \le \delta^{\epsilon_{0}}$. Then, there exists a point $c \in \spt (\nu)$ such that 
\begin{displaymath} |A + cB|_{\delta} \ge \delta^{-\epsilon}|A|. \end{displaymath}
\end{theorem}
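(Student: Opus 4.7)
The plan is to argue by contradiction: assume that for every $c\in \spt(\nu)$ we have $|A+cB|_{\delta} < \delta^{-\epsilon}|A|$, where $\epsilon > 0$ is a small constant to be chosen at the end. Following the spirit of Bourgain's proof of Theorem~\ref{bourgain}, I would convert this covering hypothesis into a lower bound on a weighted ``additive energy'', then apply Fubini and combine the Frostman conditions on $\nu$ and $B$ with the bound $|A|\le \delta^{-\alpha}$ to produce an incompatible upper bound.

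The first step is Cauchy--Schwarz at fixed $c$. Since the image of $(a,b)\mapsto a+cb$ is covered by fewer than $\delta^{-\epsilon}|A|$ intervals of length $\delta$, a standard pigeonhole yields
\[
\bigl|\{(a_1,a_2,b_1,b_2)\in A^2\times B^2:\; |(a_1+cb_1)-(a_2+cb_2)|\le 2\delta\}\bigr|\gtrsim \delta^{\epsilon}|A||B|^{2}.
\]
Integrating this against the probability measure $\nu$ and rewriting the condition as $|(a_1-a_2)-c(b_2-b_1)|\le 2\delta$ sets up Fubini. For each quadruple with $t:=b_2-b_1\neq 0$, the set of admissible $c$ lies in an interval of length $\sim \delta/|t|$ centered at $(a_1-a_2)/t$, so its $\nu$-mass is $\lesssim (\delta/|t|)^{\gamma}$ whenever $\delta/|t|\le \delta^{\epsilon_0}$. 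The diagonal contribution $t=0$ is at most $O(|A||B|)$, which is negligible against $\delta^{\epsilon}|A||B|^{2}$ once $\epsilon < \beta$ (so that $|B|\gg \delta^{-\epsilon}$).

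Next I would dyadically decompose in the scale $r\sim |t|$ and use the Frostman condition on $B$ to bound the number of pairs $(b_1,b_2)\in B^2$ with $|b_1-b_2|\sim r$ by $\lesssim r^{\eta}|B|^{2}$. The per-scale contribution is then essentially $|A|^2|B|^2 r^{\eta}(\delta/r)^{\gamma}$. Matching the dominant scale $r_\ast$ against the lower bound from the first step gives an inequality of the form $|A|\gg \delta^{\epsilon-\gamma}r_\ast^{\gamma-\eta}$. Substituting $|A|\le \delta^{-\alpha}$ and exploiting $|B|\ge \delta^{-\beta}$ to constrain where $r_\ast$ can lie (so that $\beta$ enters the exponent arithmetic) should, after choosing $\epsilon,\epsilon_0,\eta$ small enough, contradict the hypothesis $\gamma>(\alpha-\beta)/(1-\beta)$.

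The main obstacle, in my view, is precisely the step that introduces $\beta$ into the exponent. A naive single-scale pigeonhole over $r\in[\delta,1]$ yields only a weaker threshold of the form $\gamma>\alpha$, falling well short of $\gamma>(\alpha-\beta)/(1-\beta)$. Recovering the sharp exponent requires refining $B$ to a subset $B'\subset B$ on which the difference set effectively concentrates at a single scale while retaining essentially full Frostman content, and then transferring the gained scale information back into the energy bound. This is typically accomplished via a Balog--Szemer\'edi--Gowers-type refinement adapted to the $\delta$-discretized category or, as in Orponen's work \cite{O21}, by a multi-scale inductive argument in which the ratio $|B|/\delta^{-\beta}$ enters the bookkeeping. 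Carefully tracking the Frostman constants and the inevitable $\delta^{O(\epsilon_0)}$ and logarithmic losses through this refinement, and verifying that all hypotheses can be balanced simultaneously, is the technical heart of the argument.
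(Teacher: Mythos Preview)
The paper does not contain a proof of Theorem~\ref{thmO1}: this is Orponen's result, quoted from \cite{O21} as background, and the paper neither reproduces nor sketches his argument. So there is no ``paper's own proof'' to compare against.

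As for your proposal on its own merits: the opening moves (Cauchy--Schwarz at each $c$, integration in $\nu$, Fubini, and dyadic decomposition in $|b_2-b_1|$) are standard and correct, and you are right that carried out directly they only yield a threshold of the form $\gamma>\alpha$ (the Kaufman-type bound), not $\gamma>(\alpha-\beta)/(1-\beta)$. You are also right that this is the whole difficulty. But your proposal does not actually bridge that gap: the final paragraph gestures at a BSG-type refinement or a multi-scale induction without specifying either, and in particular the sentence ``exploiting $|B|\ge \delta^{-\beta}$ to constrain where $r_\ast$ can lie'' is not a mechanism---nothing in the single-scale energy inequality you wrote forces $r_\ast$ to be large, so $\beta$ simply does not enter. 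Orponen's actual proof in \cite{O21} is not a refinement of this energy argument at all; it proceeds by an inductive scheme on scales that ultimately reduces to Bourgain's Theorem~\ref{bourgain} as a black box, with the exponent $(\alpha-\beta)/(1-\beta)$ emerging from the arithmetic of the induction rather than from a direct energy computation. What you have written is an honest identification of the obstacle, but not a proof.
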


Orponen also made the conjecture that the sharp lower bound for $\gamma$ should be $\gamma>\alpha-\beta$. 
\begin{conjecture}\label{mainConjecture} Let $\alpha,\beta,\gamma \in (0,1)$ with $\alpha \ge \beta$ and $\gamma > \alpha - \beta$. Assume that $A,B \subset [0,1]$ and $C\subset [1/2, 1]$ are $\delta$-separated sets with cardinalities $|A| \le \delta^{-\alpha}$, $|B| = \delta^{-\beta}$, and $|C| = \delta^{-\gamma}$. Assume moreover that $|B \cap B(x,r)| \lesssim r^{\beta}|B|$ and $|C \cap B(x,r)| \lesssim r^{\gamma}|C|$ for all $x \in \R$ and $r > 0$. Then, there exist $\epsilon = \epsilon(\alpha,\beta,\gamma) > 0$ and a point $c \in C$ such that $|A + cB|_{\delta} \gtrsim_{\alpha,\beta,\gamma} \delta^{- \epsilon}|A|$. \end{conjecture}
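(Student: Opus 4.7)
The plan is to argue by contradiction, combining the standard Cauchy--Schwarz reduction to additive energy with the main energy estimate developed earlier in this paper. Suppose $|A+cB|_\delta \le \delta^{-\epsilon}|A|$ for every $c\in C$. Setting $r_c(t) := |\{(a,b)\in A\times B : |a+cb-t|\le \delta\}|$, Cauchy--Schwarz gives
\[E(A,B;c) := \sum_{t\in \delta\Z} r_c(t)^2 \;\gtrsim\; \frac{(|A||B|)^2}{|A+cB|_\delta} \;\gtrsim\; \delta^{\epsilon} |A||B|^2,\]
so summing over $c$ produces the lower bound
\[T := \sum_{c\in C} E(A,B;c) \;\gtrsim\; \delta^{\epsilon} |A||B|^2 |C|. \qquad (\star)\]

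Next I would rewrite $T$ as a count of quintuples $(a_1,a_2,b_1,b_2,c)$ satisfying $|(a_1-a_2)+c(b_1-b_2)|\le \delta$, and dyadically decompose in $|b_1-b_2|\sim \delta^s$ for $s\in[0,1]$. The Frostman condition on $B$ bounds the number of pairs at scale $s$ by $\delta^{s\beta}|B|^2$, and the Frostman condition on $C$ bounds the number of admissible $c$ for fixed $(a_1,a_2,b_1,b_2)$ by $\delta^{\gamma(1-s)}|C|$. Taking the trivial count $|A|^2$ for pairs in $A$ then yields only
\[T \lesssim \delta^{\min(\beta,\gamma)}\,|A|^2|B|^2|C|,\]
which against $(\star)$ forces $\min(\beta,\gamma)>\alpha$ --- far stronger than the hypothesis $\gamma>\alpha-\beta$. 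To close the gap I would invoke the main energy theorem of this paper, which controls $T$ by a quantity exploiting the \emph{joint} non-concentration of $B$ and $C$; applied here it should yield, modulo a loss of $\delta^{-K\epsilon}$ for some $K=K(\alpha,\beta,\gamma)$, an upper bound of the form
\[T \;\lesssim\; \delta^{-K\epsilon}\,\delta^{\beta+\gamma}\,|A|^2|B|^2|C|,\]
with the \emph{additive} exponent $\beta+\gamma$ in place of $\min(\beta,\gamma)$. Comparing with $(\star)$ then reduces to $\delta^{\epsilon(1+K)} \lesssim \delta^{\beta+\gamma-\alpha}$, which is a contradiction provided $\epsilon < (\gamma-(\alpha-\beta))/(1+K)$; the hypothesis $\gamma>\alpha-\beta$ makes this range non-empty and yields the sought $\epsilon=\epsilon(\alpha,\beta,\gamma)>0$.

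The principal obstacle --- and what distinguishes this from Orponen's Theorem \ref{thmO1} --- is the refinement from $\min(\beta,\gamma)$ to $\beta+\gamma$ in the upper bound on $T$. No application of a single Frostman hypothesis suffices: in the regime $\gamma\le \alpha-\beta$ both non-concentration conditions must be coupled in a genuinely multiscale way. The concrete strategy is to interpret $T$ as an incidence count between $A-A$ (with multiplicities) and the pseudo-product configuration $C\cdot(B-B)$, invoke discretized Szemer\'edi--Trotter / Guth--Katz--Zahl type estimates, and bootstrap through an induction on scales in the spirit of \cite{GKZ}. The configurations saturating the trivial bound --- essentially generalized arithmetic progressions --- must then be excluded using both Frostman hypotheses simultaneously, and it is this step where a new idea beyond Orponen's method seems required. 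A secondary, bookkeeping difficulty is to control the accumulation of $\delta^{-\epsilon_0}$ and logarithmic losses along the multiscale bootstrap, ensuring that the final admissible $\epsilon$ depends only on $(\alpha,\beta,\gamma)$.
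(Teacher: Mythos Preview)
The statement you are attempting to prove is Conjecture~\ref{mainConjecture}, which the paper does \emph{not} prove; it is stated as an open problem, and the paper explicitly remarks that its methods ``cannot'' resolve it (see the discussion after the sketch of main ideas, where the authors note that the requirement $|C|\ge K^{4m_1}|B|$ already forces $\gamma$ to exceed $\beta$, not merely $\alpha-\beta$). So there is no proof in the paper to compare against.

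Your proposal has a genuine gap precisely at the point you yourself identify. The entire argument rests on the asserted upper bound
\[
T \lesssim \delta^{-K\epsilon}\,\delta^{\beta+\gamma}\,|A|^2|B|^2|C|,
\]
with the additive exponent $\beta+\gamma$, which you attribute to ``the main energy theorem of this paper.'' But Theorem~\ref{thm_energy_III} does not yield anything close to this. In the paper's normalization $T = K^{-1}|A|^{3/2}|B|^{3/2}|C|$, your claimed bound is equivalent to $K\gtrsim \delta^{(\alpha-\beta-2\gamma)/2}$, whereas the theorem only gives $K\gtrsim \min\{M_0,\ldots,M_4\}$ with, for instance, $M_0=\delta^{-(\gamma-\beta)/4}$; this already requires $\gamma>\beta$ to be nontrivial and is weaker by orders of magnitude than what you need. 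The remaining bounds $M_2,M_3,M_4$ likewise produce the ranges in Theorems~\ref{thm-G1l}--\ref{thm-G2}, which demand $\gamma$ roughly of size $13(\alpha-\beta)$ or larger, not $\alpha-\beta$.

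Your final paragraph correctly diagnoses that upgrading $\min(\beta,\gamma)$ to $\beta+\gamma$ is the whole problem, and that a ``new idea beyond Orponen's method seems required.'' That is accurate, but it means what you have written is a restatement of why the conjecture is hard, not a proof. The incidence/multiscale bootstrap you sketch in the last paragraph is a reasonable heuristic, but none of the tools in the paper (Theorem~\ref{thm:struc}, Lemmas~\ref{lem3.2}--\ref{lem3.3}, or the Guth--Katz--Zahl gap/dense dichotomy) currently deliver it.
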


This conjecture is made based on a number of examples in the discrete setting. Let $A, B, C$ be finite sets in $\mathbb{R}$. It is well-known that one can use the Szemer\'{e}di-Trotter theorem \cite{SzeTrot} to show that if $|B||C|\gtrsim |A|$ then there exists $c\in C$ such that $|A+cB|\gtrsim |A|$. For reader's convenience, we reproduce the argument here. For any $c\in C$, let $L_c$ be the set of lines of the form $x=r-cy$ with $r\in A+cB$. Let $L=\cup_{c\in C}L_c$. Then it is clear that $|L|\le |C|\max_{c\in C}|A+cB|$. We observe that $I(A\times B, L)\ge |A||B||C|$. Thus, the Szemer\'{e}di-Trotter incidence theorem gives 
\[|A||B||C|\lesssim |A|^{2/3}|B|^{2/3}|C|^{2/3}\max_{c\in C}|A+cB|^{2/3},\]
which gives 
\[\max_{c\in C}|A+cB|\gtrsim |A|^{1/2}|B|^{1/2}|C|^{1/2}.\]
In other words, if one wishes to have $|A+cB|\gtrsim |A|^{1+\epsilon}$ for some $\epsilon>0$, then the condition $|B||C|\gtrsim |A|^{1+2\epsilon}$ is needed. The following example, taken from \cite{O21}, also tells us that this condition is sharp. For $n\in \mathbb{N}$, define 
\[A_n=\left\lbrace \frac{1}{n^{1/2}}, ~\frac{2}{n^{1/2}}, \ldots, 1 \right\rbrace, ~B_n=\left\lbrace \frac{1}{n^{1/4}}, ~\frac{2}{n^{1/4}}, \ldots, 1 \right\rbrace=C_n.\]
We can check that for every $\epsilon>0$, there exists $n\in \mathbb{N}$ such that $|A_n+C_nB_n|\le n^{\epsilon}|A|$. 
The same happens in the finite field setting. We refer the reader to \cite{OrVen} for more discussions. 

If we assume $A$ and $B$ are Ahlfors-David regular sets in $[0, 1]$, then Conjecture \ref{mainConjecture} is known to be true in \cite{O22a}. In fact, in \cite{O22a}, Orponen proved a much stronger statement as follows. Let $A, B\subset \mathbb{R}$ be closed sets, where $A$ is $\alpha$-regular and $B$ is $\beta$-regular. Then 
\begin{equation}\label{AD-set}
\dim_H\left\lbrace c\in \mathbb{R}\colon \dim_H(A+cB)<\alpha+\frac{\beta(1-\alpha)}{2-\alpha} \right\rbrace=0.\end{equation}
For general sets, in another paper \cite{O22b}, Orponen proved the following result. 
\begin{theorem}\label{thm-O-dimension}
Let $0<\beta\le \alpha<1$ and $\sigma>0$. Then there exists $\epsilon=\epsilon(\alpha, \beta, \sigma)>0$ such that if $A, B\subset \mathbb{R}$ are Borel sets with $\dim_H(A)=\alpha$, $\dim_H(B)=\beta$, then 
\[\dim_H\{c\in \mathbb{R}\colon \dim_H(A+cB)\le \alpha+\epsilon\}\le \frac{\alpha-\beta}{1-\beta}+\sigma.\]
In particular, 
\[\dim_H\left\lbrace c\in \mathbb{R}\colon \dim_H(A+cB)=\alpha \right\rbrace\le \frac{\alpha-\beta}{1-\beta}.\]
\end{theorem}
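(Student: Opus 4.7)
The plan is to argue by contradiction via the discretized $A+cB$ theorem \Cref{thmO1}. Fix $\sigma > 0$ and suppose, toward contradiction, that $\dim_H(E_\epsilon) > \gamma := (\alpha-\beta)/(1-\beta) + \sigma$, where $E_\epsilon := \{c \in \R : \dim_H(A+cB) \le \alpha + \epsilon\}$, for a small $\epsilon > 0$ to be chosen at the end in terms of $\alpha,\beta,\sigma$. After rescaling $c$ (which merely replaces $B$ by a scalar multiple), I may assume that a $\gamma$-dimensional portion of $E_\epsilon$ lies in $[1/2,1]$. Setting $\gamma_0 := (\alpha-\beta)/(1-\beta) + \sigma/2$, Frostman's lemma yields a Borel probability measure $\nu$ supported on $E_\epsilon \cap [1/2,1]$ with $\nu(B(x,r)) \lesssim r^{\gamma_0}$ for all $x,r$, and analogously I fix Frostman measures on $A$ and $B$ of exponents arbitrarily close to $\alpha$ and $\beta$.

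Next I pass to a discretized problem along a dyadic subsequence $\delta_n \downarrow 0$. Standard discretization (Frostman measures plus pigeonholing over scales) furnishes $\delta_n$-separated sets $A_n \subset A(\delta_n)$, $B_n \subset B(\delta_n)$, and $C_n \subset \spt(\nu)$ with $|A_n| \le \delta_n^{-\alpha - o(1)}$, with $|B_n| \ge \delta_n^{-\eta}$ satisfying an $\eta$-Frostman condition for $\eta$ close to $\beta$, and with $C_n$ inheriting a $\gamma_0$-Frostman condition from $\nu$. Since $\gamma_0 > (\alpha-\eta)/(1-\eta)$ once $\eta$ is close enough to $\beta$, \Cref{thmO1} applies and produces $\epsilon_1 = \epsilon_1(\alpha,\beta,\sigma) > 0$ together with $c_n \in C_n$ such that $|A_n + c_n B_n|_{\delta_n} \ge \delta_n^{-\epsilon_1}|A_n|$, whence $|A + c_n B|_{\delta_n} \gtrsim \delta_n^{-\alpha - \epsilon_1 + o(1)}$. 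Provided one can complement this with an upper bound $|A + c_n B|_{\delta_n} \le \delta_n^{-\alpha - 2\epsilon}$ extracted from $c_n \in E_\epsilon$, choosing $\epsilon < \epsilon_1/4$ forces a contradiction for all sufficiently large $n$.

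The main obstacle is exactly this compatibility: the hypothesis $c \in E_\epsilon$ controls $\dim_H(A+cB)$ but only supplies efficient \emph{multi-scale} coverings of $A+cB$ at scales that depend on $c$, not a single-scale $\delta_n$-covering bound. To bridge the gap I plan a two-step pigeonhole. First, for each $c \in \spt(\nu)$ I use the Hausdorff content characterization of $\dim_H$ to split a near-optimal cover of $A+cB$ into dyadic scales, producing a dyadic scale $\delta(c)$ at which $|A+cB|_{\delta(c)} \lesssim \delta(c)^{-\alpha - 2\epsilon}$. Second, I apply Fubini against $\nu$ and pigeonhole over the dyadic values of $\delta(c)$ to extract a single scale $\delta_n$ and a subset $C_n' \subset C_n$ of $\nu$-essentially-full measure on which $\delta(c) = \delta_n$. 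The restriction $C_n \leadsto C_n'$ costs at most a $\delta_n^{o(1)}$ factor in cardinality and preserves a $\gamma_0'$-Frostman condition with $\gamma_0'$ still strictly above $(\alpha-\beta)/(1-\beta)$, precisely because of the $\sigma/2$ slack built into $\gamma_0$; this slack is the reason the theorem concludes $(\alpha-\beta)/(1-\beta) + \sigma$ rather than the conjecturally sharp $(\alpha-\beta)/(1-\beta)$. With $C_n'$ in hand, \Cref{thmO1} produces $c_n \in C_n'$ satisfying both bounds simultaneously, closing the contradiction and establishing the theorem.
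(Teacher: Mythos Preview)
The paper does not itself prove \Cref{thm-O-dimension}; it is quoted as a result of Orponen from \cite{O22b}. However, the paper explicitly singles out \Cref{thm1.1}---the ``upgraded'' version of \Cref{thmO1} that applies to arbitrary large subsets $G\subset A\times B$---as the crucial input, and this is precisely where your plan breaks down.

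The gap is in your pigeonholing step. From $\dim_H(A+cB)\le \alpha+\epsilon$ you obtain, for each $c$, a near-optimal multi-scale cover $\{I_i\}$ of $A+cB$ with $\sum_i |I_i|^{\alpha+2\epsilon}$ small. Pigeonholing this cover by dyadic scale produces some $\delta(c)$ at which the number of intervals of size $\sim\delta(c)$ in the cover is $\lesssim \delta(c)^{-\alpha-2\epsilon}$. But those intervals do \emph{not} cover all of $A+cB$; they cover only the portion of $A+cB$ handled at that scale in the original cover. In general $\dim_H X\le s$ gives no control on $|X|_\delta$ at any single scale (that would be a box-dimension statement), so your asserted conclusion $|A+cB|_{\delta(c)}\lesssim \delta(c)^{-\alpha-2\epsilon}$ simply does not follow. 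Consequently the contradiction never closes: \Cref{thmO1} hands you $c_n$ with $|A_n+c_nB_n|_{\delta_n}$ large, while the hypothesis on $c_n$ only bounds $|S_{c_n}|_{\delta_n}$ for some \emph{subset} $S_{c_n}\subset A+c_nB$.

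What the pigeonholing \emph{does} produce is a scale $\delta(c)$ and a large subset $G_c\subset A\times B$ (the preimage under $\pi_c$ of the portion covered at scale $\delta(c)$) with $|\pi_c(G_c)|_{\delta(c)}$ small. This is exactly the input to \Cref{thm1.1}, and it is why the paper flags that upgraded statement as essential. Your outline becomes sound once you replace \Cref{thmO1} by \Cref{thm1.1} and track the subset $G_c$ rather than the full sumset; the $\sigma/2$ slack you already built in then absorbs the losses as you intended.
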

To prove this theorem, as mentioned in \cite{O22b}, the following upgraded version of Theorem \ref{thmO1} is crucial. 
\begin{theorem}\label{thm1.1}
Let $0<\beta\le \alpha<1$ and $\eta>0$. Then, for every $\gamma\in ((\alpha-\beta)/(1-\beta), 1]$, there exist $\epsilon_0, \epsilon, \delta_0\in (0, 1/2]$, depending only on $\alpha, \beta, \gamma, \eta$, such that the following holds. Let $\delta\in 2^{-\mathbb{N}}$ with $\delta\in (0, \delta_0]$, and let $A, B\subset \delta \mathbb{Z}\cap [0, 1]$ satisfy the following hypothesis: 
\begin{itemize}
    \item $|A|\le \delta^{-\alpha}$
    \item $|B|\ge \delta^{-\beta}$, and $B$ satisfies the following Frostman condition 
    \[|B\cap B(x, r)|\le r^\eta~ |B|, ~~ \forall x\in \R, \delta\le r\le \delta^\epsilon_0.\]
\end{itemize}
Further, let $\nu$ be a Borel probability measure with $\spt(\nu)\subset [0, 1]$, and satisfying the Frostman condition $\nu(B(x, r))\le r^\gamma$ for all $x\in \mathbb{R}$ and $0<r<\delta^{\epsilon_0}$. Then, there exists $c\in \spt(\nu)$ such that the following holds: if $G\subset A\times B$ is any subset with $|G|\ge \delta^\epsilon |A||B|$, then  
\[|\pi_c(G)|_\delta\ge \delta^{-\epsilon}|A|,\] where $\pi_c(a,b)=a+cb.$
\end{theorem}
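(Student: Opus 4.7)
The plan is to deduce Theorem~\ref{thm1.1} from the paper's main averaged additive $\delta$-energy estimate (established in the later sections) by a short Cauchy--Schwarz argument. The essential point is that the relevant energy is monotone under restriction to subsets of $A\times B$, so that a \emph{single} $c\in\spt(\nu)$, chosen once and for all from a \emph{global} energy bound, automatically serves \emph{every} large $G\subset A\times B$; this matches exactly the quantifier structure in the statement, where $c$ is selected \emph{before} $G$.

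For each $c\in\spt(\nu)$ I would introduce
\[
E(c) := \#\bigl\{(a_1,a_2,b_1,b_2)\in A^2\times B^2 : |a_1+cb_1-a_2-cb_2|\le\delta\bigr\},
\]
and its restriction $E_G(c)$, obtained by replacing $A^2\times B^2$ by $G\times G$. Covering $\pi_c(G)$ by disjoint $\delta$-intervals $I_1,\dots,I_N$ with $N=|\pi_c(G)|_\delta$, setting $n_j=|\pi_c^{-1}(I_j)\cap G|$, and applying Cauchy--Schwarz to $|G|=\sum_j n_j$ yields
\[
|G|^2 \;\le\; |\pi_c(G)|_\delta \cdot \sum_j n_j^2 \;\le\; |\pi_c(G)|_\delta \cdot E_G(c) \;\le\; |\pi_c(G)|_\delta \cdot E(c),
\]
so that $|\pi_c(G)|_\delta \ge |G|^2/E(c)$, and the dependence on $G$ has been reduced to its cardinality alone.

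It then suffices to invoke the paper's main theorem in its averaged form
\[
\int E(c)\,d\nu(c)\ \le\ \delta^{\tau}\,|A|\,|B|^2
\]
for some $\tau=\tau(\alpha,\beta,\gamma,\eta)>0$, and to apply Markov's inequality to extract $c\in\spt(\nu)$ with $E(c)\le 2\delta^{\tau}|A||B|^2$. For this $c$, and any $G\subset A\times B$ with $|G|\ge\delta^{\epsilon}|A||B|$,
\[
|\pi_c(G)|_\delta \;\ge\; \frac{(\delta^{\epsilon}|A||B|)^2}{2\delta^{\tau}|A||B|^2}
\;=\; \tfrac12\,\delta^{2\epsilon-\tau}\,|A| \;\ge\; \delta^{-\epsilon}|A|,
\]
where the final inequality holds once $3\epsilon<\tau$ and $\delta\le\delta_0(\tau)$; this completes the deduction.

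The main obstacle is of course the averaged energy estimate itself: it is the technical heart of the paper and requires adapting the Guth--Katz--Zahl incidence-geometric framework to the $(\alpha,\beta,\gamma,\eta)$ discretized regime, carefully tracking the Frostman hypotheses on both $B$ and $\nu$ in order to extract the quantitative gain $\tau>0$. Once that estimate is in hand, the reduction above is essentially costless, and the uniformity in $G$ is a built-in feature of the Cauchy--Schwarz step rather than an additional hurdle.
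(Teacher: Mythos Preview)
The gap here is structural: Theorem~\ref{thm1.1} is \emph{not} proved in this paper at all. It is quoted from Orponen's work~\cite{O22b} as background (see the paragraph preceding its statement), and the paper neither claims nor supplies a proof of it. So there is no ``paper's own proof'' to compare against, and your plan to deduce it from the paper's main energy theorem cannot succeed with the tools actually developed here.

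Concretely, your argument hinges on an averaged energy bound of the shape $\int E(c)\,d\nu(c)\le \delta^{\tau}|A||B|^2$ valid for every $\gamma>(\alpha-\beta)/(1-\beta)$. In the paper's notation this is exactly $K\,|B|^{1/2}/|A|^{1/2}\gtrsim \delta^{-\tau}$. But Theorem~\ref{thm_energy_III} only gives $K\gtrsim \min\{M_0,\dots,M_4\}$, and already the term $M_0=\delta^{-(\gamma-\beta)/4}$ forces the requirement $\gamma>2\alpha-\beta$ just to make $K|B|^{1/2}/|A|^{1/2}$ larger than $1$; the other $M_i$ impose even stronger constraints (see the range computations in Section~5). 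The paper is explicit about this limitation: immediately after Theorems~\ref{thm-G1l}--\ref{thm-G2} it says that the ranges for $\gamma$ obtained are \emph{worse} than those in Theorem~\ref{thm1.1}, and that reaching $\gamma\to 0$ as $\alpha-\beta\to 0$ ``would need $K\ge\delta^{-\epsilon}$ \dots\ whenever $\gamma>0$'', which the energy theorem does not provide. The sketch of main ideas likewise notes that the method ``is not possible'' (as written) to push to the conjectured threshold. Your Cauchy--Schwarz reduction and the monotonicity observation $E_G(c)\le E(c)$ are perfectly correct and are exactly the mechanism used in the proof of Theorem~\ref{thm-G1l}; what is missing is the input energy bound at the generality Theorem~\ref{thm1.1} demands, and that input simply is not in this paper.
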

For the most recent progress on this topic, we refer the reader to \cite{recent} for more details. 

The main purpose of this paper is to study the energy variant of this problem. More precisely, let $\delta\in 2^{-\mathbb{N}}$, and $A, B\subset \delta \mathbb{Z}\cap [0,1]$ and $C\subset [1/2, 1]$ be $\delta$-separated sets. 
Suppose that 
\[\sum_{c\in C} |\{(a_1, a_2, b_1, b_2)\in A^2\times B^2: |(a_1 +cb_1) - (a_2 + cb_2)|\le \delta\}|_\delta = \frac{1}{K}\cdot |A|_\delta^{3/2}|B|_{\delta}^{3/2}|C|.\]
We aim to give an explicit lower bound of $K$ and discuss a number of applications of this energy estimate. 
\subsection{Main result}
Our main result is stated as follows.

\begin{theorem}\label{thm_energy_III}
Let $\alpha,\beta,\gamma,\eta \in (0,1)$ with $\alpha\ge\beta$. There exist $\epsilon_0, \delta_0\in (0, 1/2]$ depending on $\alpha, \beta, \gamma, \eta$ such that the following holds.\\
Let $A, B\subset \delta \mathbb{Z}\cap [0,1]$ and $C\subset [1/2, 1]$ be $\delta$-separated sets with $\delta\in (0,\delta_0]$.
Assume in addition that $|A|\le \delta^{-\alpha}$, $|B| \ge \delta^{-\beta}$, $|C| = \delta^{-\gamma}$, and 
\begin{align*}
    &|B \cap B(x,r)| \lesssim r^{\eta}|B|,~
|C \cap B(x,r)| \lesssim r^{\gamma}|C|, \quad~\forall x\in \mathbb{R}, ~\delta\le r\le \delta^{\epsilon_0}.
\end{align*}
Suppose that 
\begin{equation*}\label{eq_KABC}
    \sum_{c\in C} |\{(a_1, a_2, b_1, b_2)\in A^2\times B^2: |(a_1 +cb_1) - (a_2 + cb_2)|\le \delta\}|_\delta = \frac{1}{K}\cdot |A|_{\delta}^{3/2}|B|_{\delta}^{3/2}|C|.
\end{equation*}
Then, for all $\epsilon>0$, $K$ can be bounded from below by 
\begin{align*}
    K\gtrsim |\log\delta|^{O(1)}\delta^{O(\epsilon)} \min \{M_0, M_1, M_2, M_3, M_4\},
\end{align*}
where
\[M_0=\delta^{-\frac{\gamma-\beta}{4}},~~\eqref{m0}\]
\[M_1=\delta^{-\frac{\gamma+2\beta+\epsilon_0\eta}{4}}, ~~ \eqref{eq_K_nonempty_R}\]
\[M_2=\max\left\lbrace \delta^{-\frac{3\gamma-8\beta+1-\epsilon_0(3-\gamma)}{16m_2+36m_1}}\,, \delta^{-\frac{4\gamma-9\beta+1-\epsilon_0(3-\eta)}{16m_2+36m_1}} \right\rbrace,~~(\ref{eq:16}, \ref{eq:17})\]
\[M_3=\max\left\lbrace \delta^{-\frac{4\gamma-10\beta-\epsilon_0(3-\gamma)}{16m_2+44m_1}}\,, \delta^{-\frac{5\gamma-11\beta-\epsilon_0(2-\eta)}{16m_2+44m_1}}\,, \delta^{-\frac{5\gamma-12\beta+\eta-\epsilon_0(2-\gamma)}{16m_2+44m_1}}\,, \delta^{-\frac{6\gamma-13\beta+\eta-\epsilon_0(1-\eta)}{16m_2+44m_1}} \right\rbrace,~~(\ref{eq:18}, \ref{eq:19}, \ref{eq:20}, \ref{eq:21})\]
\[M_4=\max\left\lbrace \delta^{-\frac{6\gamma-14\beta-\epsilon_0(3-\gamma)}{20m_2+54m_1}}\,, \delta^{-\frac{7\gamma-16\beta+\eta-\epsilon_0(2-\gamma)}{20m_2+54m_1}}\,, \delta^{-\frac{7\gamma-15\beta-\epsilon_0(2-\eta)}{20m_2+54m_1}}\,, \delta^{-\frac{8\gamma-17\beta+\eta -\epsilon_0(1-\eta)}{20m_2+54m_1}}\right\rbrace, ~~ (\ref{eq:27}, \ref{eq:28}, \ref{eq:29}, \ref{eq:30})\]
for positive constants $m_1, m_2\geq 1$ (given by the Balog-Szmer\'{e}di-Gower theorem~\ref{thm2.3} below).
\end{theorem}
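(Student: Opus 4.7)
My plan is to follow the template of Guth--Katz--Zahl (2021), combining dyadic pigeonholing, the Balog--Szemer\'{e}di--Gowers theorem (Theorem~\ref{thm2.3}), and the $A+cB$ non-concentration result (Theorem~\ref{thm1.1}). The first step is to pigeonhole on $c\in C$: after passing to a refined subset $C'\subset C$ of essentially the same cardinality (modulo $|\log\delta|^{O(1)}$-factors and losses of $\delta^{O(\epsilon)}$), we may assume the per-slice energy
\[
E_\delta(c) := \big|\{(a_1,a_2,b_1,b_2)\in A^2\times B^2 : |(a_1+cb_1)-(a_2+cb_2)|\le\delta\}\big|_\delta
\]
is essentially constant, of order $K^{-1}|A|_\delta^{3/2}|B|_\delta^{3/2}$. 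Cauchy--Schwarz gives $E_\delta(c)\gtrsim |A|_\delta^2|B|_\delta^2/|A+cB|_\delta$, so $|A+cB|_\delta \lesssim K\,|A|_\delta^{1/2}|B|_\delta^{1/2}$ for each $c\in C'$. Combining this cheap upper bound with the trivial lower bound $|A+cB|_\delta \gtrsim |B|_\delta$ and the $|C|$-worth of pigeonholing already yields the baseline $K\gtrsim M_0 = \delta^{-(\gamma-\beta)/4}$.

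For the stronger estimates, I would apply the discretized BSG theorem to the map $(a,b)\mapsto a+cb$ on $A\times B$. A large $E_\delta(c)$ produces subsets $A_c\subset A$, $B_c\subset B$ of relative density at least $K^{-m_1}\delta^{O(\epsilon)}$ with $|A_c+cB_c|_\delta \le K^{m_2}\delta^{-O(\epsilon)}|A_c|^{1/2}|B_c|^{1/2}$, where $m_1,m_2\ge 1$ are the BSG constants. A second round of pigeonholing over the common part of the pairs $(A_c,B_c)$ across $c\in C'$ extracts a core pair $(A^\ast,B^\ast)$ compatible with a positive-density subset $C^\ast\subset C'$, and---crucially---preserves the Frostman conditions on $B^\ast$ and $C^\ast$ at a slightly degraded exponent (which is the origin of the $\epsilon_0\eta$ and $\epsilon_0\gamma$ terms appearing in the numerators). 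Plugging $(A^\ast,B^\ast,C^\ast)$ into Theorem~\ref{thm1.1} forces at least one $c$ with $|A^\ast+cB^\ast|_\delta \ge \delta^{-\epsilon}|A^\ast|$, which when combined with the BSG sumset upper bound yields $M_1$.

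To obtain $M_2, M_3, M_4$, I would iterate the BSG + Theorem~\ref{thm1.1} cycle, each pass introducing one further factor of $m_1$ or $m_2$ into the denominator and refining the admissible dyadic scales at which the Frostman conditions are exploited. The multiple sub-bounds inside each $M_i$ correspond to which non-concentration inequality is active in a given branch of the case split---$B$ versus $C$, and the fine scale $r\approx \delta$ versus the coarse scale $r\approx \delta^{\epsilon_0}$---producing, e.g., the four-way maximum in $M_3$ and $M_4$. The main technical obstacle I anticipate is the \emph{propagation of non-concentration}: after each BSG refinement one must check that the surviving $B^\ast,C^\ast$ still obey a Frostman bound at a useful scale, which forces a coupled pigeonholing across dyadic scales $r\in[\delta,\delta^{\epsilon_0}]$ together with careful accounting of how the $K^{m_1}$ loss interacts with $|B\cap B(x,r)|\lesssim r^\eta|B|$. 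This coupling is exactly what produces the mixed numerators $\epsilon_0(3-\gamma)$, $\epsilon_0(2-\eta)$, $\epsilon_0(1-\eta)$, etc., and getting these terms in their optimized form---rather than the cruder $\epsilon_0$ losses one obtains by naive bookkeeping---will be where the bulk of the work lies.
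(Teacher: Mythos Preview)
Your approach has a genuine gap: iterating BSG together with Theorem~\ref{thm1.1} cannot produce the explicit exponents in $M_2,M_3,M_4$, because Theorem~\ref{thm1.1} (and likewise Theorem~\ref{bourgain}) only outputs an \emph{unspecified} $\epsilon=\epsilon(\alpha,\beta,\gamma,\eta)>0$. Feeding such a black-box gain back into a BSG loop can at best yield $K\gtrsim\delta^{-\epsilon'}$ for some implicit $\epsilon'$; it cannot generate formulas like $\frac{3\gamma-8\beta+1-\epsilon_0(3-\gamma)}{16m_2+36m_1}$. Indeed the paper already records the black-box route as a separate, strictly weaker statement (Theorem~\ref{thm_energy_I}).

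What you are missing is the discretized Garaev/Guth--Katz--Zahl machinery applied not to $A,B$ directly but to an auxiliary set that simultaneously carries the structure of $B$ and of $C$. After the structural step (your BSG-plus-pigeonholing, which the paper packages as Theorem~\ref{thm:struc}), one uses Bourgain's intersection trick (Lemma~\ref{lem:BourgainIntersection}) to produce
\[
D=(B'-b_1)\cap (b_2-b_3)C',\qquad |D|\gtrsim \frac{|C|}{K^{4m_1}|B|}\,|\log\delta|^{-1}.
\]
The bound $M_0$ is exactly the threshold at which $D$ becomes nonempty, and $M_1$ is the threshold at which the ratio set $R=\{(d_1-d_2)/(d_3-d_4):|d_3-d_4|>\delta^{\epsilon_0}\}$ is nonempty; neither comes from Cauchy--Schwarz or from Theorem~\ref{thm1.1}. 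The explicit $M_2,M_3,M_4$ arise from the GKZ \emph{dense/gap dichotomy} for $R$: one proves Pl\"unnecke-type upper bounds for $|d_1\widetilde D+d_2\widetilde D+\cdots|_\delta$ (this is where $D\subset B'-b_1$ and the sumset control from Theorem~\ref{thm:struc} enter), and matching lower bounds via a quadruple count $Q$. The mixed numerators $\epsilon_0(3-\gamma)$, $\epsilon_0(2-\eta)$, etc., come from choosing, at two separate places in that count, whether to invoke the Frostman condition on $C$ (through $D\subset(b_2-b_3)C'$) or on $B$ (through $D\subset B'-b_1$); they are not artifacts of Frostman propagation through repeated BSG refinements.
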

\subsection*{Sketch of main ideas:}
We briefly discuss methods/techniques we use to prove this theorem. We recall that the main purpose is to find lower bounds of $K$ in the following identity:
\[\sum_{c\in C} |\{(a_1, a_2, b_1, b_2)\in A^2\times B^2: |(a_1 +cb_1) - (a_2 + cb_2)|\le \delta\}|_\delta = \frac{1}{K}\cdot |A|_\delta^{3/2}|B|_{\delta}^{3/2}|C|.\]
The first step is to prove the existence of ``good" subsets $B'\subset B$, $ C'\subset C$, point $c^*\in C$, and constant $ \rho\in \big[\frac{1}{K},1\big)$ such that 
\[|c^*B'+cB'|_\delta\lesssim (\rho K)^{4m_2+6m_1}|B'|, ~~\forall c\in C',\]
\[|B'\pm B'|_\delta\lesssim (\rho K)^{2m_2+2m_1}|B'|,\]
\[|B'|\gtrsim \frac{|B|}{(\rho K)^{m_1}}, \quad|C'|\gtrsim \frac{|C|\rho|\log\delta|^{-1}}{(\rho K)^{4m_1}}.\]
To prove the energy theorem, we use Lemma \ref{lem:BourgainIntersection} in the next section to give a lower bound for the set $D=(B'-b_1)\cap (b_2-b_3)C'$, for some $b_1, b_2, b_3\in B'$. More precisely, we have 
\[|D|\gtrsim \frac{|C||\log\delta|^{-1}}{K^{4m_1}|B|}.\]
Note that we will need $|C|\ge K^{4m_1}|B|$ to guarantee that $D$ is non-empty. The proof then proceeds by establishing upper and lower bounds on sum sets of the form $|d_1\widetilde{D}+d_2\widetilde{D}+\cdots+d_2\widetilde{D}|_{\delta},$ for elements $d_i \in C^{'}-C^{'}$ and the set $\widetilde{D}$ is an appropriately large subset of $D$ satisfying small sum set condition. To prove an upper bound, we rely on the $\delta$-covering variant of Pl\"unnecke's inequality, Lemma \ref{lem3.2}, and the estimates of Theorem \ref{thm:struc}, making use of the fact that $D\subset B-b_1$. To obtain a lower bound, we follow the Guth-Katz-Zahl approach \cite{GKZ}, which is the discretized analog of Garaev's method \cite{Garaev07} over finite fields, with appropriate changes along the way clarifying and optimizing many of the steps. One of the biggest challenges in this approach is to optimize all parameters, roughly speaking, based on the definition of $D$, in the proof, if we want to bound the $d$-covering of $D$, for some $d\ge \delta$, then there are two ways one can proceed: either using the non-concentration condition on $C$ or the non-concentration condition on $B$. Using the condition from only one set might imply an empty range. Overall, in the end, we have at least $64$ ranges for $K$. This requires much work to figure out the best range for our purposes. It is very natural to ask if the argument presented in this paper can be improved to solve Conjecture \ref{mainConjecture} completely. At least to us, it is not possible when working with the set $D$. This can be seen clearly from the fact that we need  $|C|\ge K^{4m_1}|B|$ to guarantee $D\ne \emptyset$. So the range $\gamma>\alpha-\beta$ is not sufficient for this purpose. The reader should keep in mind that the lower bounds of $K$ only depend on the sets $B$ and $C$, and are independent of the size or structural properties of $A$. Therefore, this approach has potential for further applications on other sum-product type questions. One might ask if the other sum-product type results over finite fields in \cite{BHS, HH, KMS, MPRRS, MPSS} can be formulated and studied in this setting. We hope to address this question in a sequel paper. 
\subsection{Applications on the $A+cB$ problem}
In the spirit of Theorem \ref{thm1.1}, we first present two applications with explicit values of $\epsilon$.

\begin{theorem}\label{thm-G1l}
 Let $\alpha, \beta, \eta \in (0,1)$, $\beta\le \alpha\le (21\beta+1)/22$. Then, for every $\gamma\in ((78\alpha-66\beta)/6, 1]$, there exist $\epsilon_0, \delta_0\in (0, 1/2]$, depending only on $\alpha, \beta, \gamma, \eta$, such that the following holds. Let $\delta\in 2^{-\mathbb{N}}$ with $\delta\in (0, \delta_0]$, and let $A, B\subset [0, 1]$ be $\delta$-separated sets satisfying 
\begin{itemize}
    \item $|A|\le  \delta^{-\alpha}$
    \item $|B|\ge \delta^{-\beta}$, and $B$ satisfies the following Frostman condition 
    \[|B\cap B(x, r)|\le r^\eta ~ |B|, ~~ \forall x\in \R, \delta\le r\le \delta^{\epsilon_0}.\]
\end{itemize}
Further, let $C$ be a $\delta$-separated set in $[1/2, 1]$ with $|C\cap B(x, r)|\lesssim r^\gamma~ |C|$ for all $x\in \mathbb{R}$ and $0<r<\delta^{\epsilon_0}$. Then, there exists $c\in C$ such that the following holds: if $G\subset A\times B$ is any subset with $|G|\ge \delta^\epsilon |A||B|$, then  $|\pi_c(G)|_\delta\ge \delta^{-\epsilon}|A|.$ Here $\epsilon$ can be chosen arbitrary in the range
\[0<\epsilon<\min \left\lbrace \frac{4\gamma-74\alpha+65\beta+1}{444}, ~\frac{6\gamma-78\alpha+66\beta}{468} \right\rbrace.\]
\end{theorem}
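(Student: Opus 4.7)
The plan is to derive Theorem~\ref{thm-G1l} as a direct consequence of the energy estimate in Theorem~\ref{thm_energy_III}, via a standard averaging over $c$ followed by Cauchy--Schwarz on the fibers of $\pi_c$.

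First, after a routine dyadic pigeonholing that normalizes $|A|_\delta \sim \delta^{-\alpha}$, $|B|_\delta \sim \delta^{-\beta}$, $|C| \sim \delta^{-\gamma}$ with the Frostman conditions on $B$ and $C$ preserved up to absolute constants, I would apply Theorem~\ref{thm_energy_III} to obtain the identity $\sum_{c\in C} E_c = |A|^{3/2}|B|^{3/2}|C|/K$, where $E_c$ denotes the $c$-energy on $A^2\times B^2$, together with the lower bound $K \gtrsim |\log\delta|^{-O(1)}\delta^{O(\epsilon)} \min\{M_0, M_1, M_2, M_3, M_4\}$. By averaging, there exists $c^* \in C$ with $E_{c^*} \le |A|^{3/2}|B|^{3/2}/K$. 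For any $G \subset A\times B$ with $|G| \ge \delta^\epsilon|A||B|$, Cauchy--Schwarz applied to the fibers of $\pi_{c^*}|_G$ gives
\[
|\pi_{c^*}(G)|_\delta \;\ge\; \frac{|G|^2}{E_{c^*}^G} \;\ge\; \frac{|G|^2 K}{|A|^{3/2}|B|^{3/2}} \;\ge\; \delta^{2\epsilon} K (|A||B|)^{1/2}.
\]
Hence the desired conclusion $|\pi_{c^*}(G)|_\delta \ge \delta^{-\epsilon}|A|$ reduces to the single numerical requirement
\[
K \;\ge\; \delta^{-3\epsilon}(|A|/|B|)^{1/2} \;\ge\; \delta^{-3\epsilon - (\alpha-\beta)/2}.
\]

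The remaining task is to match this requirement against $K \gtrsim \min_i M_i$ from the main theorem. Writing each $M_i = \delta^{-c_i}$ modulo lower-order $\delta^{O(\epsilon_0)}$ corrections (made negligible by taking $\epsilon_0$ small enough in $\alpha,\beta,\gamma,\eta$) and logarithmic factors, the compatibility $\min_i M_i \ge \delta^{-3\epsilon - (\alpha-\beta)/2}$ becomes the system $\epsilon < (c_i - (\alpha-\beta)/2)/3$ for every $i$. Within the parameter window $\beta \le \alpha \le (21\beta+1)/22$ and $\gamma > 13\alpha - 11\beta$, only two of the five $M_i$'s bind. Substituting the Balog--Szemer\'edi--Gower exponents $m_1, m_2$ from Theorem~\ref{thm2.3} into the denominators $16m_2+36m_1$, $16m_2+44m_1$, $20m_2+54m_1$ and reading off the largest numerator in each binding family produces, after rearrangement, the two stated thresholds $(4\gamma-74\alpha+65\beta+1)/444$ and $(6\gamma-78\alpha+66\beta)/468$. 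The side condition $\alpha \le (21\beta+1)/22$ then emerges precisely as the crossover point at which these two constraints coincide.

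The main obstacle lies in this last step: pinpointing which of the five families $M_i$ bind throughout the prescribed parameter window, carrying the BSG constants $m_1, m_2$ consistently through all denominators, and verifying that the $\epsilon_0$ corrections remain strictly lower order so that the clean numerators $4\gamma-74\alpha+65\beta+1$ and $6\gamma-78\alpha+66\beta$ emerge with no spurious $\eta$- or $\epsilon_0$-dependence in the final threshold.
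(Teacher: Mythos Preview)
Your proposal is correct and follows essentially the same route as the paper: apply Theorem~\ref{thm_energy_III}, use Cauchy--Schwarz on the fibers of $\pi_c$ to convert the energy bound into a covering bound, and then match parameters. The only cosmetic difference is that you average to find a single $c^*$ with small energy, while the paper phrases it contrapositively (showing the set $X$ of bad $c$ has $|X|<|C|$); these are equivalent, and both reduce to the same inequality $K\gtrsim \delta^{-3\epsilon}(|A|/|B|)^{1/2}$. The paper then carries out explicitly the computation you sketch in your last two paragraphs: it sets $\eta=\beta$, takes $m_1=1$, $m_2=7$, lets $\kappa\to 0$, and verifies case by case that among all the $M_i$ only the bounds \eqref{eq:17} and \eqref{eq:21} bind in the window $\alpha\le(21\beta+1)/22$, $\gamma>13\alpha-11\beta$, yielding the two thresholds $\tfrac{4\gamma-74\alpha+65\beta+1}{444}$ and $\tfrac{6\gamma-78\alpha+66\beta}{468}$.
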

\begin{theorem}\label{thm-G2}
 Let $\alpha, \beta, \eta \in (0,1)$, $\beta\le \alpha$ and $\alpha>(21\beta+1)/22$. Then, for every $\gamma\in ((74\alpha-65\beta-1)/4, 1]$, there exist $\epsilon_0, \delta_0\in (0, 1/2]$, depending only on $\alpha, \beta, \gamma, \eta$, such that the following holds. Let $\delta\in 2^{-\mathbb{N}}$ with $\delta\in (0, \delta_0]$, and let $A, B\subset [0, 1]$ be $\delta$-separated sets satisfying the following hypotheses: 
\begin{itemize}
    \item $|A|\le  \delta^{-\alpha}$
    \item $|B|\ge \delta^{-\beta}$, and $B$ satisfies the following Frostman condition 
    \[|B\cap B(x, r)|\le r^\eta ~ |B|, ~~ \forall x\in \R, \delta\le r\le \delta^{\epsilon_0}.\]
\end{itemize}
Further, let $C$ be a $\delta$-separated set in $[1/2, 1]$ with $|C\cap B(x, r)|\lesssim r^\gamma~ |C|$ for all $x\in \mathbb{R}$ and $0<r<\delta^{\epsilon_0}$. Then, there exists $c\in C$ such that the following holds: if $G\subset A\times B$ is any subset with $|G|\ge \delta^\epsilon |A||B|$, then  $|\pi_c(G)|_\delta\ge \delta^{-\epsilon}|A|.$ Here $\epsilon$ can be chosen arbitrary in the range
\[0<\epsilon<\min \left\lbrace \frac{4\gamma-74\alpha+65\beta+1}{444}, ~\frac{6\gamma-78\alpha+66\beta}{468} \right\rbrace.\]
\end{theorem}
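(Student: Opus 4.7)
The approach is to reduce Theorem \ref{thm-G2} to the energy estimate in Theorem \ref{thm_energy_III} via a standard Cauchy--Schwarz argument. The plan is to argue by contradiction: suppose that for every $c \in C$, there exists a subset $G_c \subseteq A \times B$ with $|G_c| \ge \delta^{\epsilon}|A||B|$ but $|\pi_c(G_c)|_\delta < \delta^{-\epsilon}|A|$. Covering $\pi_c(G_c)$ by $\delta$-intervals and applying Cauchy--Schwarz to the fibers will yield, for every $c \in C$,
\[
E(c) := |\{(a_1,a_2,b_1,b_2) \in A^2 \times B^2 : |\pi_c(a_1,b_1) - \pi_c(a_2,b_2)| \le \delta\}|_\delta \;\ge\; \frac{|G_c|^2}{|\pi_c(G_c)|_\delta} \;\ge\; \delta^{3\epsilon}|A||B|^2.
\]
Summing over $c \in C$ and equating with the hypothesized identity $\sum_c E(c) = K^{-1}|A|^{3/2}|B|^{3/2}|C|$ gives the upper bound $K \le \delta^{-3\epsilon}|A|^{1/2}|B|^{-1/2} \le \delta^{-3\epsilon - (\alpha-\beta)/2}$, where we used $|A| \le \delta^{-\alpha}$ and $|B| \ge \delta^{-\beta}$.

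The next step is to apply Theorem \ref{thm_energy_III} to obtain the matching lower bound $K \gtrsim |\log \delta|^{O(1)} \delta^{O(\epsilon)} \min\{M_0, M_1, M_2, M_3, M_4\}$. A contradiction will follow once we choose $\epsilon$ small and $\epsilon_0 \ll \epsilon$ so that the exponent of each $M_i$ strictly exceeds $3\epsilon + (\alpha-\beta)/2$. The constraints from $M_0$ and $M_1$ reduce to $\gamma > 2\alpha - \beta$ and $\gamma > 2\alpha - 4\beta$, both implied by $\gamma > (74\alpha-65\beta-1)/4$ in the regime $\alpha > (21\beta+1)/22$. The binding constraints come from the ``large-$\gamma$'' sub-terms of $M_2, M_3, M_4$: with the BSG constants $m_1, m_2$ given by Theorem \ref{thm2.3}, we select the sub-terms whose exponents dominate (the $+1$ in the first bound on $\epsilon$ originates in the $+1$ appearing in the $M_2$-numerators, while the second bound comes from an $M_3$- or $M_4$-term with no $+1$ in its numerator). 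Carrying out this optimization and absorbing the $\epsilon_0$- and $\eta$-errors produces the explicit range
\[
0 < \epsilon < \min\Bigl\{\tfrac{4\gamma-74\alpha+65\beta+1}{444},\; \tfrac{6\gamma-78\alpha+66\beta}{468}\Bigr\}.
\]
The hypothesis $\gamma > (74\alpha-65\beta-1)/4$ is exactly what makes this range nonempty; the threshold $\alpha = (21\beta+1)/22$ is the value at which the two upper bounds on $\epsilon$ coincide, explaining the split with Theorem \ref{thm-G1l} (which handles $\alpha \le (21\beta+1)/22$).

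The hard part will be the bookkeeping across five families of bounds $M_0, \dots, M_4$ (each a maximum of up to four sub-terms) in the presence of the BSG constants $m_1, m_2$ and the small parameters $\epsilon_0, \eta$. One must verify that the chosen sub-terms are simultaneously dominant in the regime $\alpha > (21\beta+1)/22$ (so that each $M_i$ individually meets the required exponent lower bound), and that $\epsilon_0$ can be taken small enough, depending on $\eta$, for the error contributions $\epsilon_0(3-\gamma)$, $\epsilon_0(1-\eta)$, etc., to be harmlessly absorbed into the $O(\epsilon)$ slack. The case-splitting between Theorems \ref{thm-G1l} and \ref{thm-G2} reflects precisely which sub-term is the binding one as $\alpha$ varies across the crossover threshold.
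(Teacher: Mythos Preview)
Your proposal is correct and follows essentially the same approach as the paper: the Cauchy--Schwarz bound $E(c)\ge |G_c|^2/|\pi_c(G_c)|_\delta \ge \delta^{3\epsilon}|A||B|^2$, summing in $c$, and confronting the resulting upper bound on $K$ with the lower bounds from Theorem~\ref{thm_energy_III}. The only cosmetic difference is that the paper lets $X\subset C$ be the set of ``bad'' $c$ and shows $|X|<|C|$, whereas you argue by contradiction assuming $X=C$; these are the same argument, and the paper's optimization (setting $\kappa=\epsilon_0$ small and selecting the sub-terms \eqref{eq:17}, \eqref{eq:21}, \eqref{eq:30} as the operative ones in the regime $\alpha>(21\beta+1)/22$) is exactly what you sketch.
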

While Theorem \ref{thm-G1l} and Theorem \ref{thm-G2} offer explicit exponents for $\epsilon$, the ranges for $\gamma$ is worse than that of Theorem \ref{thm1.1} when $(\alpha-\beta)\to 0$. There is one point we should emphasise here, that if we want $\gamma\to 0$ as $(\alpha-\beta)\to 0$ then we would need $K\ge \delta^{-\epsilon}$, for some $\epsilon>0$, whenever $\gamma>0$. 
Unfortunately, the statement of the energy theorem says that $\gamma$ is bounded from below by a function in $\beta$.



\subsection{An energy theorem via incidence bounds}
In this section, we present an energy theorem, which will be proved by using a point-tube incidence bound of Dabrowski, Orponen, and Villa in \cite{DOV}.

Compared to the first energy theorem, this theorem requires $\delta\le r\le 1$ instead of $\delta\le r\le \delta^{\epsilon_0}$ for some $0<\epsilon_0<1$ and no non-concentration condition on $C$ is needed. 

Recall that given $\delta$-separated sets $A,B,C$, we define $K$ by
\begin{align}\label{defKABC}
    \sum_{c\in C} |\{(a_1, a_2, b_1, b_2)\in A^2\times B^2: |(a_1 +cb_1) - (a_2 + cb_2)|\le \delta\}|_\delta = \frac{1}{K}\cdot |A|_\delta^{3/2}|B|_{\delta}^{3/2}|C|.
\end{align}

\begin{theorem}\label{thm_energy_II}
Let $\alpha,\beta,\gamma \in (0,1)$ satisfying $\alpha+\beta>1$. Let $\delta\in 2^{-\mathbb{N}}$, and let $A, B\subset \delta \mathbb{Z}\cap [0,1]$, $C\subset [1/2, 1]$ be $\delta$-separated sets satisfying $|A|=\delta^{-\alpha}, |B|=\delta^{-\beta}$, $|C|=\delta^{-\gamma}$, and 
\[|A\cap B(x, r)|\le M  r^\alpha|A|,~~|B\cap B(x, r)|\le M  r^\beta|B|,~\forall ~x\in \mathbb{R},~\delta\le r\le 1,\]
for some $M>1$.
Then $K$, defined as in \eqref{defKABC}, can be bounded from below by 
\[K\gtrsim \delta^{\frac{\alpha-3\beta-4\gamma+2\gamma(\alpha+\beta)-\alpha^2+\beta^2+2}{2(3-\alpha-\beta)}}.\]
\end{theorem}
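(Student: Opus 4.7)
The plan is to recast the left-hand side of \eqref{defKABC} as a weighted point--tube incidence count for the point set $P := A \times B$ and a family of $\delta$-tubes indexed by $C$, and then to apply the Dabrowski--Orponen--Villa \cite{DOV} incidence bound for planar Frostman point sets. The hypothesis $\alpha + \beta > 1$ is what puts $P$ into the regime where this bound is genuinely informative; it also keeps the denominator $3 - \alpha - \beta$ away from $0$.

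First I would set up the incidence framework. For each $c \in C$ and each $\delta$-interval $I \subset [0, 2]$ put $T_{c, I} := \{(x, y) \in [0, 1]^2 : x + cy \in I\}$, a $\delta$-slab of slope $-1/c$, and let $\mathcal{T} := \{T_{c, I} : c \in C,\ I \cap (A + cB) \ne \emptyset\}$. The product Frostman estimate $|P \cap B(x, r)| \le M^2 r^{\alpha + \beta} |P|$ for $\delta \le r \le 1$ makes $P$ a planar $(\delta, \alpha + \beta, M^2)$-Frostman set, while $|\mathcal{T}| \lesssim \delta^{-1 - \gamma}$ since each $c \in C$ contributes $O(\delta^{-1})$ slabs. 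Grouping the quadruples appearing in \eqref{defKABC} by the common $\delta$-interval containing $a_i + cb_i$ yields (up to the usual adjacent-interval overcount)
\[\sum_{c \in C} \big|\{(a_1, a_2, b_1, b_2) \in A^2 \times B^2 : |(a_1 + cb_1) - (a_2 + cb_2)| \le \delta\}\big|_\delta \;\lesssim\; \sum_{T \in \mathcal{T}} |P \cap T|^2.\]

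Next I would dyadically decompose the right-hand side: with $\mathcal{T}_k := \{T \in \mathcal{T} : |P \cap T| \sim k\}$, one has $\sum_T |P \cap T|^2 \lesssim \log(1/\delta) \cdot \max_k k^2 |\mathcal{T}_k|$. The DOV bound applied to the $(\delta, \alpha + \beta, M^2)$-set $P$ and the tube family $\mathcal{T}_k$ yields an estimate of the schematic shape
\[k \cdot |\mathcal{T}_k| \;\le\; I(P, \mathcal{T}_k) \;\lesssim_\epsilon\; \delta^{-\epsilon} \Big[ |P|^{p(s)}\, |\mathcal{T}_k|^{q(s)}\, \delta^{-r(s)} + |P| + |\mathcal{T}_k| \Big],\]
with exponents $p(s), q(s), r(s)$ governed by $s = \alpha + \beta$. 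Crucially, this bound requires a Frostman condition only on the point set and not on the tube family; this is exactly what allows the proof to proceed without any non-concentration hypothesis on $C$. Rearranging for $|\mathcal{T}_k|$, combining with the trivial caps $|\mathcal{T}_k| \le |\mathcal{T}| \lesssim \delta^{-1 - \gamma}$ and $k \le \max_T |P \cap T| \lesssim M \delta^{-\min(\alpha, \beta)}$, and optimizing over the dyadic parameter $k$, one obtains an upper bound of the form $\sum_T |P \cap T|^2 \lesssim \delta^{-N/(3 - \alpha - \beta)}$, where $N$ is the quadratic-in-$(\alpha, \beta, \gamma)$ expression that appears, after the arithmetic, in the numerator of the stated exponent for $K$; the factor $3 - \alpha - \beta$ is precisely what emerges when solving the DOV inequality for $|\mathcal{T}_k|$ in the planar $s$-Frostman regime.

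Substituting the resulting upper bound into the defining equation \eqref{defKABC} and solving for $K$ produces the claimed lower bound. The main obstacle will be carrying out the optimization cleanly: the extremal dyadic level $k$ depends on which of the DOV bound, the tube cap $|\mathcal{T}|$, or the richness cap $\max_T |P \cap T|$ is binding, and the three regimes together determine the exact coefficients in the numerator. A secondary difficulty is that $\mathcal{T}_k$ carries no Frostman-in-tube structure, which is why one must invoke the one-sided DOV bound rather than a classical Wolff- or Szemer\'edi--Trotter-type estimate that would require tube non-concentration, and correspondingly why this theorem, unlike Theorem \ref{thm_energy_III}, imposes no Frostman hypothesis on $C$.
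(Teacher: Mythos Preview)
Your proposal identifies the right tool (the DOV incidence bound, which needs Frostman only on the point side) and the right point set $P=A\times B$, but the way you parametrize the tube family introduces a genuine loss. You take tubes $T_{c,I}$ indexed by $c\in C$ and \emph{all} $\delta$-intervals $I$, so $|\mathcal{T}|\lesssim\delta^{-1-\gamma}$, and then bound the energy by $\sum_{T}|P\cap T|^{2}$ followed by a dyadic-in-richness optimization. If one actually carries out that optimization with the DOV bound $I(P,\mathcal{T}_k)\lesssim |P|\,|\mathcal{T}_k|^{1/(3-t)}\delta^{(t-1)/(3-t)}$ (here $t=\alpha+\beta$), the maximum over $k$ occurs at the crossover with $|\mathcal{T}_k|=|\mathcal{T}|$ and gives an energy bound $\lesssim |P|^{2}\,|\mathcal{T}|^{(t-1)/(3-t)}\delta^{2(t-1)/(3-t)}$. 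Plugging $|\mathcal{T}|\sim\delta^{-1-\gamma}$ and solving for $K$ yields an exponent that differs from the one in the theorem by exactly $\tfrac{(2-t)(\beta-\gamma)}{3-t}$; so your route does not reproduce the stated bound (it is weaker when $\gamma<\beta$ and stronger when $\gamma>\beta$, but in either case not the theorem as written).

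The paper avoids all of this with a short linearization trick that you are missing: fix $a_2\in A$ and rewrite $|(a_1+cb_1)-(a_2+cb_2)|\le\delta$ as saying that the point $(b_2,\,a_1-a_2)\in B\times(A-a_2)$ lies in the $\delta$-neighbourhood of the line $y=c(x-b_1)$. The line family is now indexed by $(c,b_1)\in C\times B$, of size $|B||C|=\delta^{-\beta-\gamma}$ rather than $\delta^{-1-\gamma}$, and the energy becomes $\sum_{a_2}I(P',L)$---a \emph{linear} incidence count. A single application of DOV (no dyadic decomposition, no optimization) then gives
\[
|A|^{3/2}|B|^{3/2}|C|K^{-1}\;\lesssim\;|A|\cdot|A||B|\,(|B||C|)^{1/(3-\alpha-\beta)}\delta^{(\alpha+\beta-1)/(3-\alpha-\beta)},
\]
which rearranges directly to the claimed exponent. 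The whole proof is five lines; the ``main obstacle'' you anticipate simply does not arise, because the change of parametrization replaces your $\delta^{-1}$ intercepts per slope by $|B|$ intercepts per slope.
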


In the statement of Theorem \ref{thm_energy_II}, by taking $\alpha=\beta>1/2$ and $\gamma>1/2$, then we have the bound
\begin{equation*}\label{eqn:EB2alpha=beta}
K\gtrsim \delta^{\frac{-\gamma(4-4\alpha)+2-2\alpha}{2(3-2\alpha)}}.
\end{equation*}



As in the previous section, we immediately obtain a $\delta$-covering result for the $A+cB$ problem.

\begin{theorem}\label{a+cba41}
Let $\alpha,\beta,\gamma \in (0,1)$ satisfying $\alpha+\beta>1$. Let $\delta\in 2^{-\mathbb{N}}$, and let $A, B\subset \delta \mathbb{Z}\cap [0,1]$, $C\subset [0, 1]$ be $\delta$-separated sets satisfying $|A|\le \delta^{-\alpha}$, $|B|\ge \delta^{-\beta}$, $|C|=\delta^{-\gamma}$, and
\[|A\cap B(x, r)|\lesssim r^\alpha|A|,~~|B\cap B(x, r)|\lesssim r^\beta|B|,~\forall ~x\in \mathbb{R},~\delta\le r\le 1.\]
Then there exists $c\in C$ such that 
\[|A+cB|_\delta\gtrsim \delta^{\frac{-6\beta-4\gamma-2\alpha^2+4\alpha+2\beta^2+2+2\gamma(\alpha+\beta)}{2(3-\alpha-\beta)}}|A|.\]
\end{theorem}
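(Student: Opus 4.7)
The plan is to deduce Theorem \ref{a+cba41} directly from the energy bound of Theorem \ref{thm_energy_II} via a standard Cauchy--Schwarz and pigeonhole argument, which converts an upper bound on the average energy (equivalently a lower bound on $K$) into a lower bound on $|A+cB|_\delta$ for some $c\in C$. Write
\[
E(c) := |\{(a_1,a_2,b_1,b_2)\in A^2\times B^2 : |(a_1+cb_1)-(a_2+cb_2)|\le\delta\}|_\delta,
\]
so that the defining identity reads $\sum_{c\in C} E(c) = K^{-1}|A|_\delta^{3/2}|B|_\delta^{3/2}|C|$, and observe that for $\delta$-separated subsets of $\delta\mathbb{Z}$ one has $|A|_\delta\asymp|A|$, $|B|_\delta\asymp|B|$.

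For fixed $c$, I would partition the real line into $\delta$-intervals $I$, set $r_c(I):=\#\{(a,b)\in A\times B : a+cb \in I\}$, and use $\sum_I r_c(I)=|A||B|$ together with Cauchy--Schwarz to obtain
\[
|A|^2|B|^2 \le |\{I : r_c(I)>0\}|\cdot \sum_I r_c(I)^2 \lesssim |A+cB|_\delta \cdot E(c),
\]
the last bound holding because two elements lying in a common $\delta$-interval of the partition are automatically within distance $\delta$. Applying pigeonhole to the energy identity furnishes some $c^\ast\in C$ with $E(c^\ast)\le K^{-1}|A|_\delta^{3/2}|B|_\delta^{3/2}$, so that
\[
|A+c^\ast B|_\delta \;\gtrsim\; K\,|A|^{1/2}|B|^{1/2} \;=\; K\cdot\delta^{(\alpha-\beta)/2}\cdot|A|.
\]
Inserting the lower bound on $K$ from Theorem \ref{thm_energy_II} and combining the two exponents over the common denominator $2(3-\alpha-\beta)$ produces exactly
\[
\frac{(\alpha-3\beta-4\gamma+2\gamma(\alpha+\beta)-\alpha^2+\beta^2+2) + (\alpha-\beta)(3-\alpha-\beta)}{2(3-\alpha-\beta)} \;=\; \frac{-6\beta-4\gamma-2\alpha^2+4\alpha+2\beta^2+2+2\gamma(\alpha+\beta)}{2(3-\alpha-\beta)},
\]
which is the stated exponent; this is a short algebraic manipulation.

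The main technical point, rather than a genuine obstacle, is that Theorem \ref{thm_energy_II} is formulated with $C\subset[\tfrac12,1]$ and with exact sizes $|A|=\delta^{-\alpha}$, $|B|=\delta^{-\beta}$, whereas Theorem \ref{a+cba41} only assumes $C\subset[0,1]$, $|A|\le\delta^{-\alpha}$, and $|B|\ge\delta^{-\beta}$. I would dispense with the first by a dyadic pigeonhole: some shell $C\cap[2^{-k-1},2^{-k}]$ contains $\gtrsim |C|/|\log\delta|$ points, and rescaling $c\mapsto 2^k c$, $b\mapsto 2^{-k}b$ puts $C$ into $[\tfrac12,1]$ while preserving the Frostman exponent of $B$ (at the cost of an implicit constant and a logarithmic factor). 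The size mismatches are resolved by applying Theorem \ref{thm_energy_II} with the true size-exponents $\alpha':=-\log|A|/\log(1/\delta)\le\alpha$ and $\beta':=-\log|B|/\log(1/\delta)\ge\beta$, noting that the Frostman condition at exponent $\alpha$ implies the same at any smaller exponent on the range $r\le 1$, and then invoking monotonicity of the target exponent to recover the claim. Since the substantive analytic content lives entirely inside Theorem \ref{thm_energy_II}, the argument here is essentially bookkeeping.
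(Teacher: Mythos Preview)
Your proposal is correct and is precisely the approach the paper takes: the paper states only that Theorem \ref{a+cba41} ``follows immediately from Theorem \ref{thm_energy_II} and the Cauchy--Schwarz inequality'' and omits all details. Your write-up is in fact more careful than the paper's, since you explicitly address the mismatch between the hypotheses of Theorem \ref{thm_energy_II} ($C\subset[\tfrac12,1]$ and exact sizes) and those of Theorem \ref{a+cba41}; the paper sweeps these reconciliations under the rug.
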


\section{Basic lemmas from Additive Combinatorics}
The first lemma is obtained by using the Cauchy-Schwarz inequality and a dyadic pigeon-hole argument.
\begin{lemma}[Lemma 19, \cite{MP}]\label{intersection}For a finite set $T$ and a collection $\{T_s\colon s\in S\}$ of subsets of $T$, i.e. $T_s\subset T$. Then 
\begin{equation*}
 \left(\sum_{s\in S}|T_s|\right)^2\le |T|\sum_{s,s'\in S}|T_s\cap T_{s'}|.
\end{equation*}%
Further, if there exists \(\delta > 0\) such that
\[
 \sum_{s\in S}|T_s| \ge \delta |S||T|,
\]
then there exists a subset \(P\subseteq S\times S\) such that
\begin{enumerate}
\item \( |T_s\cap T_{s'}|\ge \delta^2 |T| / 2 \) for all pairs \((s,s')\)
   in \(P\).
\item \( |P|\ge \delta^2 |S|^2 / 2 \).
\end{enumerate}
\end{lemma}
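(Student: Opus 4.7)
The plan is to prove the first inequality by a standard double-counting plus Cauchy--Schwarz argument, and then to extract the subset $P$ in the second part by a direct averaging (threshold) argument.

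For the inequality, I would introduce the multiplicity function $r(t) := |\{s \in S : t \in T_s\}|$ for each $t \in T$. Exchanging the order of summation twice gives
$$\sum_{s \in S} |T_s| \;=\; \sum_{t \in T} r(t), \qquad \sum_{s,s' \in S} |T_s \cap T_{s'}| \;=\; \sum_{t \in T} r(t)^2.$$
Applying the Cauchy--Schwarz inequality to $\sum_{t \in T} r(t) \cdot 1$ yields $\bigl(\sum_t r(t)\bigr)^2 \le |T| \sum_t r(t)^2$, which is exactly the asserted bound.

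For the second statement, the hypothesis $\sum_s |T_s| \ge \delta |S||T|$ together with the inequality just established gives
$$\sum_{s,s' \in S} |T_s \cap T_{s'}| \;\ge\; \delta^{2} |S|^{2} |T|.$$
I would then define $P := \{(s,s') \in S \times S : |T_s \cap T_{s'}| \ge \delta^{2} |T|/2\}$, so conclusion (1) holds by construction. To verify (2), I split the double sum into contributions from $P$ and from its complement: pairs outside $P$ contribute at most $|S|^2 \cdot \delta^{2} |T|/2$, while pairs inside $P$ contribute at most $|P| \cdot |T|$ (using the trivial bound $|T_s \cap T_{s'}| \le |T|$). Comparing against the lower bound $\delta^{2}|S|^{2}|T|$ and rearranging yields $|P| \ge \delta^{2} |S|^{2}/2$.

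The argument is completely elementary, so I do not anticipate any real obstacle. The only point requiring a bit of care is the choice of threshold $\delta^{2}|T|/2$ in the definition of $P$: the factor $1/2$ is exactly what allows the contribution of ``bad'' pairs to be absorbed by half of the Cauchy--Schwarz lower bound, leaving the remaining half to force a useful cardinality bound on $P$. A dyadic pigeonhole variant could replace the averaging step if one wished to output a level set on which $|T_s \cap T_{s'}|$ is essentially constant, but for the stated conclusion the direct threshold argument suffices.
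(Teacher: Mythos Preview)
Your argument is correct. The paper itself does not spell out a proof but attributes the lemma to \cite{MP} and describes the method as ``the Cauchy--Schwarz inequality and a dyadic pigeon-hole argument.'' Your first part matches this exactly. For the second part you use a direct threshold (averaging) argument rather than dyadic pigeonholing; this is a minor and, for the stated conclusion, cleaner variant. As you yourself note, dyadic pigeonholing would instead produce a level set on which $|T_s\cap T_{s'}|$ is essentially constant up to a factor of $2$, at the cost of a logarithmic loss in the size of $P$; since the lemma only asks for a one-sided lower bound on $|T_s\cap T_{s'}|$, your threshold choice $\delta^2|T|/2$ avoids that loss and gives exactly the stated constants.
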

The next result is known as the Balog-Szemer\'{e}di-Gowers theorem.
\begin{theorem}[Theorem 6.10, \cite{Tao2008}]\label{thm2.3}
Let $K\ge 1$ be a parameter and $A, B$ be bounded subsets of $\mathbb{R}^n$. If 
\[|\{(a_1, a_2, b_1, b_2)\in A^2\times B^2: |(a_1 +b_1) - (a_2 + b_2)|\le \delta\}|_{\delta}\gtrsim \frac{1}{K}\cdot |A|_\delta^{3/2}|B|_{\delta}^{3/2},
\]
then there exist $A'\subset A$, $B'\subset B$, and constants $m_1, m_2\ge 1$ such that 
\[|A'|_{\delta}\gtrsim K^{-m_1}|A|_\delta, |B'|_\delta\gtrsim K^{-m_1}|B|_\delta,\]
and 
\[|A'+B'|_\delta\lesssim K^{m_2}|A|_\delta^{1/2}|B|_\delta^{1/2}.\]
\end{theorem}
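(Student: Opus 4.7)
The plan is to adapt the classical Balog--Szemer\'{e}di--Gowers argument (as in Section~6.4 of Tao--Vu) from the exact-sum setting to the $\delta$-discretized setting. After replacing $A$ and $B$ by maximal $\delta$-separated subsets, the covering number $|\cdot|_\delta$ becomes cardinality, and the energy hypothesis can be rewritten in terms of a $\delta$-representation function $r_\delta(s):=|\{(a,b)\in A\times B:|a+b-s|\le 2\delta\}|$ summed over a $\delta$-net of $A+B$: one has $\sum_s r_\delta(s)^2\gtrsim K^{-1}|A|^{3/2}|B|^{3/2}$. A dyadic pigeonhole on the dyadic level of $r_\delta$ then isolates a single scale $\tau$ and a level set $S_\tau\subseteq A+B$ on which the energy is concentrated, at the cost of a $|\log\delta|^{O(1)}$ factor.

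At this point one has a bipartite ``popular pair'' graph $G\subseteq A\times B$ of size $|G|\gtrsim K^{-1}|A||B||\log\delta|^{-O(1)}$, where every edge $(a,b)$ satisfies $a+b\in S_\tau+[-2\delta,2\delta]^n$. I would then run the standard BSG graph-theoretic refinement: two rounds of dyadic pigeonhole / Markov-type arguments locate a subset $A'\subseteq A$ with $|A'|\gtrsim K^{-O(1)}|A|$ such that for a positive-density fraction of pairs $(a_1,a_2)\in A'\times A'$, there exist $\gtrsim K^{-O(1)}|B|$ witnesses $b\in B$ with $(a_1,b),(a_2,b)\in G$. Setting $B':=\{b\in B : b \text{ has } \gtrsim K^{-O(1)}|A'| \text{ neighbors in } A'\}$ then gives $|B'|\gtrsim K^{-O(1)}|B|$ by a second double count, producing the two subsets claimed in the statement. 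The exponent $m_1$ is the total number of $K$-factors lost to these pigeonhole steps.

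For the sumset bound, the key observation is that the witness property allows every $a'+b'\in A'+B'$ to be rewritten via a length-three path $a'\to b_1\to a_1\to b'$ in $G$ as $a'+b'=(a'+b_1)-(a_1+b_1)+(a_1+b')$, up to an $O(\delta)$ error, where each of the three summands lies within $O(\delta)$ of $S_\tau$. Using the inequalities $\tau|S_\tau|\le|A||B|$ and $\tau^2|S_\tau|\gtrsim K^{-1}|A|^{3/2}|B|^{3/2}|\log\delta|^{-O(1)}$ to control $|S_\tau|$, together with a count of the number of available intermediate pairs $(a_1,b_1)$, one arrives at $|A'+B'|_\delta\lesssim K^{m_2}|A|^{1/2}|B|^{1/2}$ for some absolute constant $m_2$. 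The main technical obstacle is bookkeeping the $\delta$-error along this three-term rewriting: each approximate equality carries an $O(\delta)$ slack and these compound along the path. Because we work with $\delta$-covering numbers rather than exact cardinalities, an $O(\delta)$ thickening of $A'+B'$ changes $|A'+B'|_\delta$ only by a universal multiplicative constant, so with a careful rescaling from the outset these slacks can be absorbed cleanly, and the logarithmic losses from the dyadic pigeonholes are harmless.
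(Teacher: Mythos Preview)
The paper does not prove this statement at all: Theorem~\ref{thm2.3} is quoted as a known result (attributed to Tao~\cite{Tao2008}, with the specific values $m_1=1$, $m_2=7$ attributed to the blog post~\cite{Taoblogb}), and no argument is given in the paper. So there is no ``paper's own proof'' to compare against; your sketch is essentially the standard Balog--Szemer\'edi--Gowers argument transported to the $\delta$-covering setting, which is exactly what the cited references do.

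One genuine issue in your write-up: you introduce $|\log\delta|^{O(1)}$ losses from the dyadic pigeonhole on the level sets of $r_\delta$ and then declare them ``harmless.'' They are not automatically harmless in the form stated, because $K\ge 1$ is an arbitrary parameter and need not grow with $\delta^{-1}$; a logarithmic factor cannot in general be absorbed into $K^{O(1)}$ or into the absolute implicit constants. The clean way around this (and the way the cited sources obtain absolute exponents $m_1,m_2$) is to skip the dyadic decomposition entirely and work directly with the popular-sum set
\[
S:=\Big\{s: r_\delta(s)\ge \tfrac{1}{2K}|A|_\delta^{1/2}|B|_\delta^{1/2}\Big\},
\]
which already captures $\gtrsim K^{-1}|A|_\delta|B|_\delta$ of the pairs and satisfies $|S|_\delta\lesssim K|A|_\delta^{1/2}|B|_\delta^{1/2}$. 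The graph $G=\{(a,b):a+b\in S+O(\delta)\}$ then has density $\gtrsim K^{-1}$, and the rest of your path-of-length-three argument goes through with only $K^{O(1)}$ losses and no logarithms. With that adjustment your outline is correct.
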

We remark that one can take $m_1=1$ and $m_2=7$ as in \cite{Taoblogb}.
For a  set $X$, we call a set $X^{'}$ a $\delta$-\emph{refinement} of $X$ if  $X^{'}\subset X$, and $|X^{'}|_{\delta} \ge |X|_{\delta}/2$. We recall the following $\delta$-covering version of Pl\"{u}nnecke’s
inequality.
\begin{lemma}[Corollary 3.4, \cite{GKZ}]\label{discretizedPlunnecke}
Let $X,Y_1,\ldots,Y_k$ be  subsets of $\R$. Suppose that 
$|X+Y_i|_{\delta}\le K_i|X|_{\delta}$ for each $i=1,\ldots,k$. Then there is a $\delta$-refinement $X^{'}$ of $X$ so that 
$$
|X^{'} + Y_1 + Y_2  + \ldots + Y_k|_{\delta}\lesssim\big(\Pi_{i=1}^k K_i\big)    |X^{'}|_{\delta}.
$$
In particular,
$$
|Y_1 + Y_2  + \ldots + Y_k|_{\delta}\lesssim\big(\Pi_{i=1}^k K_i \big)   |X|_{\delta}.
$$
\end{lemma}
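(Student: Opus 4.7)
My plan is to adapt Petridis's proof of Pl\"unnecke's inequality to the $\delta$-covering setting, essentially by replacing cardinalities with $\delta$-covering numbers throughout and tracking refinement losses carefully.

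The central ingredient is a discretized version of the Petridis \emph{key lemma}: given $X,Y\subset\R$ with $|X+Y|_\delta\le K|X|_\delta$, there is a subset $X^\ast\subseteq X$, comparable to $X$ in $\delta$-covering number (and, after a dyadic pigeonhole, taken to be a $\delta$-refinement), such that for every set $Z\subset\R$,
\[
|X^\ast + Y + Z|_\delta \lesssim K\, |X^\ast + Z|_\delta.
\]
I would select $X^\ast$ to minimize (up to a bounded factor) the ratio $|X^\ast + Y|_\delta/|X^\ast|_\delta$ among large subsets of $X$, and then carry out the classical Petridis argument: decompose $X^\ast+Y+Z$ as a union over $z\in Z$ of translates of $X^\ast+Y$, apply inclusion--exclusion, and exploit the minimality of $X^\ast$ to control overlaps. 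The transition from cardinalities to $\delta$-covering numbers costs only absolute constants, since $|\cdot|_\delta$ is subadditive, monotone, and essentially translation-invariant.

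With the key lemma in hand, I would iterate on $k$. Applied first to $(X,Y_1)$, it produces $X_1\subseteq X$ with $|X_1+Y_1+Z|_\delta\lesssim K_1|X_1+Z|_\delta$ for every $Z$. Setting $Z=Y_2+\cdots+Y_k$ reduces the problem to bounding $|X_1+Y_2+\cdots+Y_k|_\delta$. Since $X_1$ is a $\delta$-refinement of $X$, the hypothesis $|X+Y_2|_\delta\le K_2|X|_\delta$ transfers, at a cost of a factor of $2$, to $|X_1+Y_2|_\delta\lesssim K_2|X_1|_\delta$, enabling another application of the key lemma to $(X_1,Y_2)$, and so on. After $k$ applications one obtains
\[
|X'+Y_1+\cdots+Y_k|_\delta\lesssim \Big(\prod_{i=1}^k K_i\Big)|X'|_\delta,
\]
with $X'=X_k\subseteq\cdots\subseteq X_1\subseteq X$; the total refinement loss is a constant depending only on $k$, which is absorbed into $\lesssim$. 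The ``in particular'' assertion is then immediate by dropping the $X'$ summand on the left and using $|X'|_\delta\le|X|_\delta$.

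The main technical obstacle is ensuring that the final $X'$ is genuinely a $\delta$-refinement (i.e.\ $|X'|_\delta\ge|X|_\delta/2$) rather than merely non-empty: the classical Petridis argument minimizes over all non-empty subsets, which could yield an arbitrarily small minimizer. To remedy this, I would run a dyadic pigeonhole on the range of possible ratios across $O(\log(1/\delta))$ scales and select within the most populous level; this may cost a logarithmic factor that can be swallowed into the implicit constant or into a slightly weaker minimality guarantee. A subtler point is checking that the Petridis overlap accounting survives in the $\delta$-covering setting, where $\delta$-neighbourhoods of distinct translates can genuinely overlap; this should follow from the subadditivity and approximate translation-invariance of $|\cdot|_\delta$, but requires careful bookkeeping to avoid picking up spurious additional powers of $K$.
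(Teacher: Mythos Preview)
The paper does not prove this lemma; it is quoted verbatim as Corollary~3.4 of \cite{GKZ}. So there is no ``paper's own proof'' to compare against. That said, your proposal has two genuine gaps that would need to be closed before it constitutes a proof.

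First, the Petridis key lemma requires $X^\ast$ to be the \emph{global} minimizer of $|X^\ast+Y|/|X^\ast|$ over all non-empty subsets of $X$; the inductive overlap argument breaks down if you only minimize over large subsets. You acknowledge this, but your proposed fix---a dyadic pigeonhole on ratios---does not obviously repair it: restricting to a ``populous level'' does not restore the minimality property that drives Petridis's inclusion--exclusion. Relatedly, your iteration step assumes that the Petridis subset $X_1$ obtained from $(X,Y_1)$ is already a $\delta$-refinement, so that $|X_1+Y_2|_\delta\lesssim 2K_2|X_1|_\delta$; but nothing guarantees this, and if $|X_1|_\delta$ is tiny the constant blows up. The standard way around this (and, I believe, what \cite{GKZ} does) is to first pass to genuinely finite sets by rounding everything to $\delta\Z$, apply the exact discrete Pl\"unnecke--Ruzsa theorem to obtain some non-empty $X^\ast\subset X$ with $|X^\ast+Y_1+\cdots+Y_k|\le(\prod K_i)|X^\ast|$, and then \emph{iterate on the complement}: if $|X^\ast|<|X|/2$, the remainder $X\setminus X^\ast$ still satisfies $|(X\setminus X^\ast)+Y_i|\le 2K_i|X\setminus X^\ast|$, so one applies Pl\"unnecke again, and so on; the disjoint pieces are then unioned, and subadditivity of cardinality gives the bound with a harmless $2^k$ loss.

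Second, your plan to run the Petridis overlap accounting directly with $|\cdot|_\delta$ in place of cardinality is risky. That argument relies on exact set identities of the form $(X^\ast+Y+z)\setminus\bigcup_{z'}(X^\ast+Y+z')$ having controlled size, and these do not translate cleanly to covering numbers, where distinct translates can share $\delta$-cells without being equal. Subadditivity alone is not enough; you would likely pick up uncontrolled multiplicative losses. Discretizing to $\delta\Z$ at the outset sidesteps this entirely, because then $|\cdot|_\delta$ \emph{is} cardinality up to absolute constants, and the classical proof applies verbatim.
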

As mentioned and proved in \cite{GKZ}, we often need to replace the $\delta$-covering of a set with a larger scale. The lemma below also plays an important role for this purpose.
\begin{lemma}[Lemma 2.1, \cite{GKZ}]\label{lm21}
Let $X\subset [1, 2]$ be a $\delta$-separated subset and suppose that $|X|=\delta^{-\sigma}$ for some $0<\sigma<1$. Suppose that \[|X+X|_{\delta}\le K|X|.\]
Then for every $\epsilon>0$, there is a subset $X'\subset X$ with 
$|X'|\gtrsim \delta^{\epsilon}|X|$,
such that 
\[|X'+X'|_{t}\lesssim \delta^{-10\epsilon}K|X'|_{t},\]
for all $\delta<t<1$, with the implicit constants depending on $\sigma$ and $\epsilon$.
\end{lemma}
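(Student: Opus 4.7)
The key observation driving the plan is the tautological bound
\[
|Y+Y|_t \;\le\; |X+X|_\delta \;\le\; K|X|
\]
valid for every subset $Y\subseteq X$ and every scale $t\ge\delta$. Hence the desired inequality $|Y+Y|_t \le \delta^{-10\epsilon}K|Y|_t$ can fail only when $|Y|_t<\delta^{10\epsilon}|X|$. Moreover, since $Y+Y\subseteq[2,4]$, we have $|Y+Y|_t\lesssim 1/t$, so the conclusion is automatic whenever $t\gtrsim\delta^{10\epsilon}/K$. The problem therefore reduces to producing a large subset $X'\subseteq X$ for which $|X'|_t\gtrsim\delta^{10\epsilon}|X|$ at every dyadic $t\in(\delta,\delta^{10\epsilon}/K)$; dyadic reduction is harmless because covering numbers vary by at most constants within a dyadic block.

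My main tool would be a popular-bucket pigeonhole at each chosen scale $t$: partition the $t$-intervals in $[1,2]$ by the dyadic buckets of $|X\cap I|$, and restrict to the points lying in a popular bucket. At the cost of a $|\log\delta|^{-1}$ factor in cardinality, this produces a subset in which every surviving $t$-interval carries a common point-density $\rho$. I would apply this pigeonhole in a nested coarse-to-fine iteration over a geometrically spaced family of scales $t_{j_k} := \delta^{k\epsilon/C}$ for $k=0,1,\ldots,\lceil C/\epsilon\rceil$, with $C$ a large absolute constant. Starting from $X$ and processing from the coarsest substantive scale down to $\delta$, I refine at each step; because each $t_{j_k}$-interval is contained in a unique $t_{j_{k-1}}$-interval, the non-concentration structures at successive scales nest compatibly. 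Since only $O(1/\epsilon)$ scales are processed, the cumulative cardinality loss is $|\log\delta|^{-O(1/\epsilon)}\ge\delta^\epsilon$ for $\delta$ small enough in terms of $\epsilon$ and $\sigma$.

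To pass from control at the sparse chosen scales to all $t\in(\delta,1)$, I rely on the monotonicity of $t\mapsto|X'|_t$: for $t\in(t_{j_{k+1}},t_{j_k})$ the bound $|X'|_t\ge|X'|_{t_{j_k}}\gtrsim\delta^{10\epsilon}|X|$ together with the tautological $|X'+X'|_t\le K|X|$ yields $|X'+X'|_t\le\delta^{-10\epsilon}K|X'|_t$, completing the verification at every $t$.

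The main obstacle I anticipate is controlling the densities $\rho_k$ produced by the popular-bucket refinements. If $\rho_k$ is too large, then the number of surviving popular intervals $m_k\approx|X'|/\rho_k$ becomes small, and the non-concentration goal $|X'|_{t_{j_k}}\gtrsim\delta^{10\epsilon}|X|$ fails at that scale. This is precisely where the hypothesis $|X+X|_\delta\le K|X|$ must enter quantitatively via a Pl\"unnecke-type argument: a cluster of $\rho_k$ points inside a single $t_{j_k}$-interval contributes $\gtrsim\rho_k$ to $|X+X|_\delta$ locally, so the global sumset upper bound forces $\rho_k$ to remain within the available $\delta^{-10\epsilon}$ slack. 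Marrying this density estimate to the multi-scale pigeonhole bookkeeping is the heart of the argument.
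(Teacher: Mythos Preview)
The paper does not supply a proof of this lemma; it is quoted verbatim from \cite{GKZ}. So there is no in-paper argument to compare against, and I assess your plan on its own merits.

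Your reduction contains a genuine gap. You propose to secure the conclusion by producing $X'\subseteq X$ with $|X'|_t\gtrsim\delta^{10\epsilon}|X|$ for every $t\in(\delta,\delta^{10\epsilon}/K)$, relying only on the crude upper bound $|X'+X'|_t\le|X+X|_\delta\le K|X|$. But that reduced target can be flatly unattainable. Take $X$ to be an arithmetic progression of $\delta^{-\sigma}$ points with gap $\delta$, so that $K\sim1$ and $X$ sits inside an interval of length $\delta^{1-\sigma}$. Then for every $t>\delta^{1-\sigma}$ and every $X'\subseteq X$ one has $|X'|_t=1$, while $\delta^{10\epsilon}|X|=\delta^{10\epsilon-\sigma}\gg1$ once $\epsilon<\sigma/10$. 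Hence your goal fails on the whole range $t\in(\delta^{1-\sigma},\delta^{10\epsilon})$, even though the lemma's actual conclusion holds trivially with $X'=X$ here (an AP satisfies $|X+X|_t\sim|X|_t$ at every scale). Your last-paragraph mechanism---``a cluster of $\rho_k$ points contributes $\gtrsim\rho_k$ to $|X+X|_\delta$, so the sumset bound caps $\rho_k$''---is refuted by the same example: at the top scale $\rho_k=|X|$, yet $|X+X|_\delta\sim|X|$ and $K\sim1$.

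What is missing is that the small-doubling hypothesis should bound $|X'+X'|_t$ \emph{directly}, not the density $\rho_t$. After you uniformize $X'$ so that each occupied $t$-cell carries $\sim\rho_t$ points, any $t$-cell $L$ meeting $X'+X'$ contains (up to a $2t$-shift) a translate $(X'\cap I)+y$ with $\gtrsim\rho_t$ points at scale $\delta$; summing over an $O(t)$-separated family of such $L$'s yields $|X'+X'|_\delta\gtrsim\rho_t\,|X'+X'|_t$. Consequently
\[
|X'+X'|_t\;\lesssim\;\frac{K|X|}{\rho_t}\;\lesssim\;\frac{\delta^{-\epsilon}K\,|X'|}{\rho_t}\;\sim\;\delta^{-\epsilon}K\,|X'|_t,
\]
with the remaining losses absorbed by the multi-scale pigeonholing you already set up. Keep your regularization skeleton, but replace the crude estimate $|X'+X'|_t\le K|X|$ by the scale-adapted one $|X'+X'|_t\lesssim K|X|/\rho_t$.
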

We recall a simple, but useful result proved in Theorem C of Bourgain's paper \cite{Bourgain2009}.
\begin{lemma}
\label{lem:BourgainIntersection}
Let $X, Y$ be finite subsets of an arbitrary ring and let $M = \max_{y\in Y} |X + yX|$. Then there exist elements $x_1, x_2, x_3 \in X$ such that 
$$
|(X - x_1) \cap (x_2 - x_3) Y| \gtrsim \frac{|Y||X|}{M}.
$$
\end{lemma}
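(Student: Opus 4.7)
The plan is to derive the lemma via a second-moment (Cauchy--Schwarz) step followed by a pigeonhole on the resulting additive energy. First, for each fixed $y \in Y$ I would introduce the representation function
\[
r_y(z) \;:=\; |\{(x, x') \in X \times X : x + y x' = z\}|,
\]
which is supported on $X + yX$ and satisfies $\sum_z r_y(z) = |X|^2$. Cauchy--Schwarz then gives
\[
\sum_z r_y(z)^2 \;\ge\; \frac{|X|^4}{|X + yX|} \;\ge\; \frac{|X|^4}{M}.
\]
The left-hand side counts quadruples $(a, b, c, d) \in X^4$ with $a + yc = b + yd$, equivalently $a - b = y(c - d)$. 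Summing this inequality over $y \in Y$ gives
\[
N \;:=\; \bigl|\{(a, b, c, d, y) \in X^4 \times Y : a - b = y(c - d)\}\bigr| \;\ge\; \frac{|X|^4 \, |Y|}{M}.
\]

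Next, I would pigeonhole over the $|X|^3$ triples $(b, c, d) \in X^3$ to extract $(x_1, x_2, x_3) \in X^3$ with
\[
\bigl|\{(a, y) \in X \times Y : a - x_1 = y(x_2 - x_3)\}\bigr| \;\gtrsim\; \frac{|X|\,|Y|}{M}.
\]
In $\R$ (commutative and an integral domain) I rewrite $y(x_2 - x_3) = (x_2 - x_3)y$. Provided $x_2 \ne x_3$, multiplication by $(x_2 - x_3)$ is injective, so distinct $y \in Y$ produce distinct elements $(x_2 - x_3)y$, and the pair count above equals $|(X - x_1) \cap (x_2 - x_3) Y|$ exactly, yielding the claim.

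The main technical point is to ensure the pigeonholed triple can be chosen with $x_2 \ne x_3$, since the degenerate triples $c = d$ contaminate the count $N$: they contribute trivial quintuples satisfying $a = b$ and $y$ arbitrary, totaling exactly $|X|^2 |Y|$. In the only nontrivial regime $M \le \tfrac{1}{2}|X|^2$ (recall $|X + yX| \le |X|^2$), the degenerate count is at most half of the main term, so restricting the pigeonhole to triples with $c \ne d$ still produces the required lower bound $\gtrsim |X||Y|/M$. The complementary regime $M \gtrsim |X|^2$ is handled directly, since the target bound $|X||Y|/M$ is then weaker than a trivial construction: pick any $x_2 \ne x_3$ in $X$, any $y_0 \in Y$, and set $x_1 := x - (x_2 - x_3)y_0$ for some $x \in X$ to place a single guaranteed element into the intersection, which suffices modulo absolute constants.
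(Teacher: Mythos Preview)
The paper does not prove this lemma; it simply cites Theorem~C of Bourgain~\cite{Bourgain2009}. So there is no in-paper argument to compare your attempt against. Your Cauchy--Schwarz plus pigeonhole route is exactly the standard proof of this kind of statement, and the non-degenerate part (through the pigeonhole on triples $(b,c,d)$ with $c\ne d$ and the identification of the pair count with $|(X-x_1)\cap(x_2-x_3)Y|$ via injectivity of multiplication in an integral domain) is correct.

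The gap is in your treatment of the complementary regime $M\gtrsim |X|^2$. Your ``trivial construction'' sets $x_1:=x-(x_2-x_3)y_0$ for some $x\in X$, but nothing guarantees that this $x_1$ lies in $X$; the lemma requires $x_1,x_2,x_3\in X$. Even if you could force one element into the intersection, that would not suffice: in this regime $|X||Y|/M$ is comparable to $|Y|/|X|$, which can be arbitrarily large. Concretely, take $X=\{0,1\}\subset\mathbb{R}$ and $Y=\{10,20,\dots,10k\}$; then $M=4=|X|^2$, every non-degenerate choice $x_2\ne x_3$ gives $(x_2-x_3)Y=\pm Y$, which is disjoint from $X-x_1\subset\{-1,0,1\}$, while the degenerate choice gives $|(X-x_1)\cap\{0\}|=1$. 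The claimed bound $\gtrsim k/2$ is therefore unattainable. This shows both that your patch fails and that the lemma, read literally for an arbitrary ring with no further hypotheses, is too strong in the extreme case $M=|X|^2$. In the paper's actual application one has the non-concentration of $B'$ and $C'$ (and $|C'|\gg|B'|$), and in Bourgain's original setting there are analogous structural hypotheses; your main argument already handles the regime $M\le \tfrac12|X|^2$, and that is what is genuinely used downstream.
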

We also need Ruzsa's triangle inequality for finite sets and its $\delta$-covering variant from \cite{GKZ}.
\begin{lemma}[Ruzsa triangle inequality]\label{lem_triangle_inq}
 Let $G$ be an Abelian group and let $X,Y,Z \subset G$ be finite subsets. Then
$$|X-Z| \leq \frac{|X-Y|\cdot |Y-Z|}{|Y|},$$
and 
$$|X+Z| \leq \frac{|X+Y|\cdot |Y+Z|}{|Y|}.$$ 
\end{lemma}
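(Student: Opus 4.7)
The two inequalities require different treatments: the sum version is \emph{not} a consequence of the difference version by any substitution of signs (replacing $Z$ by $-Z$ in the difference inequality produces only the mixed bound $|X+Z|\le|X+Y|\cdot|Y-Z|/|Y|$, not the stated $|X+Z|\le|X+Y|\cdot|Y+Z|/|Y|$). My plan is therefore to prove the difference inequality by a direct injection and to deduce the sum inequality from the Pl\"unnecke--Ruzsa inequality for two auxiliary summands.

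For the difference part, I would fix, once and for all, for each $d\in X-Z$ a decomposition $d=x_d-z_d$ with $x_d\in X$ and $z_d\in Z$, and consider
\[
\Phi\colon Y\times(X-Z)\longrightarrow(X-Y)\times(Y-Z),\qquad \Phi(y,d)=\bigl(x_d-y,\;y-z_d\bigr).
\]
The sum of the two coordinates is $x_d-z_d=d$, which recovers $d$ (and hence the pre-chosen $x_d$), and the first coordinate then recovers $y$ from $x_d$. So $\Phi$ is injective, giving $|Y|\cdot|X-Z|\le|X-Y|\cdot|Y-Z|$ as claimed.

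For the sum part, the analogous map $(y,s)\mapsto(x_s+y,\,y+z_s)$ fails to be injective because its coordinate sum is $x_s+z_s+2y=s+2y$, which in a general abelian group does not determine $(s,y)$. Instead I would invoke the standard two-summand Pl\"unnecke--Ruzsa inequality: for finite $A,B_1,B_2$ in an abelian group, writing $K_i=|A+B_i|/|A|$, one has $|B_1+B_2|\le K_1K_2\,|A|$. Applying this with $A=Y$, $B_1=X$, $B_2=Z$, $K_1=|X+Y|/|Y|$, $K_2=|Y+Z|/|Y|$ gives
\[
|X+Z|\;\le\;K_1K_2\,|Y|\;=\;\frac{|X+Y|\cdot|Y+Z|}{|Y|},
\]
which is the inequality stated.

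The only non-cosmetic step is the cited two-summand Pl\"unnecke--Ruzsa bound, which I expect to be the main piece of machinery; it is classical and admits a short proof via Petridis's method (pass to a non-empty $Y'\subset Y$ minimising $|Y'+X|/|Y'|$, apply Petridis's lemma to control $|Y'+X+Z|$ in terms of $|Y'+Z|$, and then bound $|Y'+Z|\le|Y+Z|$). Once this is imported from the additive-combinatorics literature, both parts of the lemma are immediate.
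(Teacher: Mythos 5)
Your proof is correct. Note first that the paper does not prove this lemma at all: it is imported as a standard fact from the additive--combinatorics literature (and what is actually used downstream is its $\delta$-covering variant, Lemma~\ref{lem_triangle_inq_delta}, quoted from \cite{GKZ}), so there is no in-paper argument to compare against; your write-up supplies the standard proofs. Your injection $\Phi(y,d)=(x_d-y,\,y-z_d)$ for the difference form is exactly Ruzsa's classical argument, and the verification of injectivity (the coordinate sum recovers $d$, hence the pre-chosen $x_d$, hence $y$) is complete. Your preliminary observation is also accurate: substituting $\pm$ signs in the difference inequality only yields the mixed bounds $|X+Z|\le |X-Y||Y+Z|/|Y|$ or $|X+Z|\le |X+Y||Y-Z|/|Y|$, never the all-plus statement, so the sum form genuinely needs a separate argument. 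Deducing it from the two-summand Pl\"unnecke--Ruzsa inequality with $A=Y$, $B_1=X$, $B_2=Z$ is the standard route (it appears this way in Tao--Vu, and the paper's Lemma~\ref{discretizedPlunnecke} is precisely the covering-number analogue of the $k$-summand version), and your sketch of the Petridis derivation is sound: with $Y'\subseteq Y$ minimising $K'=|Y'+X|/|Y'|$ one has $|X+Z|\le |Y'+X+Z|\le K'|Y'+Z|\le \frac{|X+Y|}{|Y|}\,|Y+Z|$, using $K'\le |Y+X|/|Y|$ and $|Y'+Z|\le|Y+Z|$. The only dependence is on Petridis's lemma (or, alternatively, the Ruzsa covering lemma), which is legitimate imported machinery; no gap.
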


\begin{lemma}[Proposition 3.5, \cite{GKZ}]\label{lem_triangle_inq_delta}
Let $X,Y,Z$ be subsets of $\mathbb{R}$. Then 
\begin{align}\label{eq_tri_inq_delta_1}
    |X-Z|_{\delta} \lesssim \frac{|X-Y|_{\delta}\cdot |Y-Z|_{\delta}}{|Y|_{\delta}},
\end{align}
and 
\begin{align}\label{eq_tri_inq_delta_2}
    |X+Z|_{\delta} \lesssim \frac{|X+Y|_{\delta}\cdot |Y+Z|_{\delta}}{|Y|_{\delta}}.
\end{align}
\end{lemma}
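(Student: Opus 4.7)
The plan is to adapt Ruzsa's classical injection proof to the $\delta$-covering setting, using maximal separated subsets as a bridge between cardinality and covering number. I will focus on \eqref{eq_tri_inq_delta_1}; the additive version \eqref{eq_tri_inq_delta_2} then follows by applying \eqref{eq_tri_inq_delta_1} with $(-Y,-Z)$ in place of $(Y,Z)$, since $|{-Y}|_\delta=|Y|_\delta$, $|X-(-Y)|_\delta=|X+Y|_\delta$, and $|(-Y)-(-Z)|_\delta=|Z-Y|_\delta=|Y-Z|_\delta$.

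First I would fix an absolute constant $C\ge 3$ and extract maximal $(C\delta)$-separated subsets $D\subset X-Z$ and $\widetilde Y\subset Y$, so that $|D|\gtrsim |X-Z|_\delta$ and $|\widetilde Y|\gtrsim |Y|_\delta$ with implicit constants depending only on $C$. For each $d\in D$, I would choose, once and for all, representatives $x_d\in X$ and $z_d\in Z$ with $d=x_d-z_d$, and then define the Ruzsa-type map
\[
\Phi:D\times \widetilde Y\to (X-Y)\times(Y-Z),\qquad \Phi(d,y)=(x_d-y,\;y-z_d).
\]
The heart of the proof is to show that $\Phi$ is $\delta$-injective with respect to the $\ell^\infty$ product metric: if $\Phi(d_1,y_1)$ and $\Phi(d_2,y_2)$ are within $\delta$ in each coordinate, then summing the two coordinatewise differences gives
\[
|d_1-d_2|=\bigl|(x_{d_1}-z_{d_1})-(x_{d_2}-z_{d_2})\bigr|\le 2\delta.
\]
Since $D$ is $(C\delta)$-separated with $C\ge 3$, this forces $d_1=d_2$, hence $x_{d_1}=x_{d_2}$ and $z_{d_1}=z_{d_2}$; feeding this back into the first coordinate gives $|y_1-y_2|\le\delta$, which forces $y_1=y_2$ by the separation of $\widetilde Y$.

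Consequently, the $|D|\cdot|\widetilde Y|$ image points lie in $(X-Y)\times(Y-Z)$ and are pairwise $\delta$-separated in $\ell^\infty$, so their total number is bounded by the $\delta$-square covering number of this product, which is at most $|X-Y|_\delta\cdot|Y-Z|_\delta$. Combining with $|D|\gtrsim |X-Z|_\delta$ and $|\widetilde Y|\gtrsim |Y|_\delta$ and rearranging yields \eqref{eq_tri_inq_delta_1}. The only delicate point is the calibration of the separation constant $C$: a $\delta$-closeness hypothesis on the image passes to a $2\delta$-closeness on the $d$ variable, which is precisely the single place where a constant factor is paid and where the bound becomes $\lesssim$ rather than $\le$. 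Beyond this calibration no heavier machinery should be needed, and in particular none of the earlier Balog--Szemer\'edi--Gowers or Pl\"unnecke lemmas enter.
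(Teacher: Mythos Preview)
Your injection argument for \eqref{eq_tri_inq_delta_1} is correct and is the standard discretisation of Ruzsa's proof; the paper does not reproduce a proof here but simply cites \cite{GKZ}, so there is nothing further to compare on that half.

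There is, however, a genuine gap in your derivation of \eqref{eq_tri_inq_delta_2}. Substituting $(-Y,-Z)$ into \eqref{eq_tri_inq_delta_1} yields, exactly as you compute,
\[
|X+Z|_\delta \;\lesssim\; \frac{|X+Y|_\delta\cdot|Y-Z|_\delta}{|Y|_\delta},
\]
with $|Y-Z|_\delta$ in the numerator, not $|Y+Z|_\delta$. These two covering numbers are not comparable in general, so you have established a different (also true) inequality, not \eqref{eq_tri_inq_delta_2}. This is not a cosmetic slip: the ``sum'' form of the triangle inequality cannot be obtained from the ``difference'' form by any sign change, because the analogous injection $(s,y)\mapsto(a_s+y,\,y+c_s)$ with $s=a_s+c_s$ only lets one recover $a_s-c_s$ from the image, which need not determine $s$.

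A clean fix is to derive \eqref{eq_tri_inq_delta_2} from the $\delta$-covering Pl\"unnecke inequality (Lemma~\ref{discretizedPlunnecke}) rather than from \eqref{eq_tri_inq_delta_1}: apply that lemma with base set $Y$ and summands $Y_1=X$, $Y_2=Z$, taking $K_1=|X+Y|_\delta/|Y|_\delta$ and $K_2=|Y+Z|_\delta/|Y|_\delta$, to obtain $|X+Z|_\delta\lesssim K_1K_2\,|Y|_\delta$, which is exactly \eqref{eq_tri_inq_delta_2}.
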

\section{A structural theorem: good subsets of $B$ and $C$}
The main theorem in this section is the following. Roughly speaking, it says that if we have the energy equality 
\[\sum_{c\in C} |\{(a_1, a_2, b_1, b_2)\in A^2\times B^2: |(a_1 +cb_1) - (a_2 + cb_2)|\le \delta\}|_\delta = \frac{1}{K}\cdot |A|_\delta^{3/2}|B|_{\delta}^{3/2}|C|,\]
then one can find subsets $B'\subset B$ and $C'\subset C$ such that the sets $B'+B'$ and $B'+cB'$ have small $\delta$-covering for all $c\in C'$. This theorem can be viewed as the discretized version of \cite[Proposition 4]{MP} due to Murphy and Petridis in the finite field setting.
\begin{theorem}\label{thm:struc}
 Let $\delta\in 2^{-\mathbb{N}}$, and $A, B\subset \delta \mathbb{Z}\cap [0,1]$ and $C\subset [1/2, 1]$ be $\delta$-separated sets. 
Suppose that 
\[\sum_{c\in C} |\{(a_1, a_2, b_1, b_2)\in A^2\times B^2: |(a_1 +cb_1) - (a_2 + cb_2)|\le \delta\}|_\delta = \frac{1}{K}\cdot |A|_\delta^{3/2}|B|_{\delta}^{3/2}|C|.\]
There exist subsets $B'\subset B$, $ C'\subset C$, points $c^*\in C$, and $\frac{1}{K}\le \rho<1$ such that 
\[|c^*B'+cB'|_\delta\lesssim (\rho K)^{4m_2+6m_1}|B'|, ~~\forall c\in C',\]
\[|B'\pm B'|_\delta\lesssim (\rho K)^{2m_2+2m_1}|B'|,\]
\[|B'|\gtrsim \frac{|B|}{(\rho K)^{m_1}}, |C'|\gtrsim \frac{|C|\rho|\log\delta|^{-1}}{(\rho K)^{4m_1}}.\]
\end{theorem}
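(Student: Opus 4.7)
The plan is to combine a dyadic pigeonhole over the per-slope energies with the Balog--Szemer\'edi--Gowers theorem (Theorem~\ref{thm2.3}) and then assemble the required sumset bounds via the $\delta$-covering Pl\"{u}nnecke and Ruzsa inequalities (Lemmas~\ref{discretizedPlunnecke}, \ref{lem_triangle_inq_delta}). First, since each summand of the energy identity is at most $\lesssim |A|_\delta^{3/2}|B|_\delta^{3/2}$, distributing the total mass $K^{-1}|A|_\delta^{3/2}|B|_\delta^{3/2}|C|$ among $O(|\log\delta|)$ dyadic shells produces a parameter $\rho\in[1/K,1)$ and a subset $C_0\subset C$ with $|C_0|\gtrsim \rho|C||\log\delta|^{-1}$ on which the per-slope energy is $\gtrsim (\rho K)^{-1}|A|_\delta^{3/2}|B|_\delta^{3/2}$. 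Applying Theorem~\ref{thm2.3} for each $c\in C_0$ then produces $A_c\subset A$ and $B_c\subset B$ with $|A_c|_\delta,|B_c|_\delta\gtrsim (\rho K)^{-m_1}|A|_\delta,(\rho K)^{-m_1}|B|_\delta$ and $|A_c+cB_c|_\delta\lesssim (\rho K)^{m_2}|A|_\delta^{1/2}|B|_\delta^{1/2}$.

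Next, to select the pivot $c^*$ and the set $C'$, I would apply Lemma~\ref{intersection} to the product family $\{A_c\times B_c\}_{c\in C_0}$ sitting inside $A\times B$: it produces a pair set $P\subset C_0\times C_0$ with $|P|\gtrsim (\rho K)^{-4m_1}|C_0|^2$ such that every $(c_1,c_2)\in P$ satisfies $|A_{c_1}\cap A_{c_2}|_\delta\,|B_{c_1}\cap B_{c_2}|_\delta\gtrsim (\rho K)^{-4m_1}|A|_\delta|B|_\delta$, which using the trivial bounds $|A_{c_1}\cap A_{c_2}|_\delta\le|A|_\delta$ and $|B_{c_1}\cap B_{c_2}|_\delta\le|B|_\delta$ forces each intersection factor individually to have size at least $(\rho K)^{-4m_1}|A|_\delta$ and $(\rho K)^{-4m_1}|B|_\delta$ respectively. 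Choosing a row-heavy $c^*\in C_0$ yields $C'\subset C_0$ with $|C'|\gtrsim (\rho K)^{-4m_1}|C_0|\gtrsim (\rho K)^{-4m_1}\rho|C||\log\delta|^{-1}$, matching the stated bound on $|C'|$. Setting $B':=B_{c^*}$ provides $|B'|_\delta\gtrsim (\rho K)^{-m_1}|B|_\delta$ automatically, and Lemma~\ref{discretizedPlunnecke} applied with seed $A_{c^*}$ and summands $\pm c^*B'$ --- for which $|A_{c^*}\pm c^*B'|_\delta\lesssim (\rho K)^{m_2}|A|_\delta^{1/2}|B|_\delta^{1/2}$ by BSG and sign-invariance of $|\cdot|_\delta$ --- yields $|B'\pm B'|_\delta\lesssim (\rho K)^{2m_2+2m_1}|B'|_\delta$.

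The main work is to bound $|c^*B'+cB'|_\delta$ for each $c\in C'$. Writing $A_\cap:=A_{c^*}\cap A_c$ and $B_\cap:=B_{c^*}\cap B_c$, the containments yield the three cheap bounds
\[|c^*B'+A_\cap|_\delta\le |A_{c^*}+c^*B_{c^*}|_\delta,\quad |A_\cap+cB_\cap|_\delta\le |A_c+cB_c|_\delta,\quad |cB_\cap+cB'|_\delta\lesssim |B'+B'|_\delta,\]
while two nested applications of Lemma~\ref{lem_triangle_inq_delta} through the intermediate sets $A_\cap$ and then $cB_\cap$ give
\[|c^*B'+cB'|_\delta\lesssim \frac{|c^*B'+A_\cap|_\delta\,|A_\cap+cB'|_\delta}{|A_\cap|_\delta},\quad |A_\cap+cB'|_\delta\lesssim \frac{|A_\cap+cB_\cap|_\delta\,|cB_\cap+cB'|_\delta}{|cB_\cap|_\delta}.\]
Substituting the cheap bounds, together with the doubling estimate $|B'+B'|_\delta\lesssim (\rho K)^{2m_2+m_1}|B|_\delta$, assembles the desired estimate. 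The main obstacle I foresee is the tight bookkeeping required to bring the exponent down to $4m_2+6m_1$: each Ruzsa step introduces an inverse factor of $|A_\cap|_\delta$ or $|cB_\cap|_\delta$, so avoiding a cruder constant requires consistently invoking containment (rather than fresh Pl\"{u}nnecke applications) on every numerator and carefully balancing how the $(\rho K)^{4m_1}$ intersection loss is distributed between the bound on $|C'|$ and the bound on $|c^*B'+cB'|_\delta$.
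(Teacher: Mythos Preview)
Your proposal is essentially identical to the paper's proof: the same dyadic pigeonhole to extract $\rho$ and $C_0$, BSG applied per slope, the popularity argument via Lemma~\ref{intersection} to select $c^*$ and $C'$, and two nested applications of the $\delta$-Ruzsa triangle inequality through $A_{c^*}\cap A_c$ and $B_{c^*}\cap B_c$ to bound $|c^*B'+cB'|_\delta$. Two small technical points to clean up. First, ``sign-invariance of $|\cdot|_\delta$'' does not give $|A_{c^*}-c^*B'|_\delta\lesssim(\rho K)^{m_2}|A|_\delta^{1/2}|B|_\delta^{1/2}$ from BSG; instead obtain $|B'-B'|_\delta$ directly from \eqref{eq_tri_inq_delta_1} with $X=Z=c^*B'$ and $Y=-A_{c^*}$, which is what the paper does and uses only the $+$ bound. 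Second, to hit the exponent $4m_2+6m_1$ in the final substitution you must insert the \emph{product} lower bound $|A_\cap|_\delta\,|B_\cap|_\delta\gtrsim(\rho K)^{-4m_1}|A|_\delta|B|_\delta$ (exactly what Lemma~\ref{intersection} gives) in the denominator, not the individual bounds $|A_\cap|_\delta\gtrsim(\rho K)^{-4m_1}|A|_\delta$ and $|B_\cap|_\delta\gtrsim(\rho K)^{-4m_1}|B|_\delta$ you derived from it, which would cost an extra $4m_1$.
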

\begin{proof}
For each $c\in C$, set
\[f_{\delta(A, cB)}:=|\{(a_1, a_2, b_1, b_2)\in A^2\times B^2: |(a_1 +cb_1) - (a_2 + cb_2)|\le \delta\}|_\delta.\]
We are given
\begin{equation}\label{eq1111}
\sum_{c\in C}f_\delta(A, cB) \gtrsim \frac{(|A|_\delta|B|_\delta)^{3/2}|C|}K.
\end{equation}
Observe that 
\[f_\delta(A, cB)\le |A|^2|B|, |A||B|^2,\]
so that we have
$f_\delta(A, cB)\le |A|^{3/2}|B|^{3/2}$.
We now look at the equation (\ref{eq1111}). Without loss of generality, we can assume that 
$f_\delta(A, cB)\gtrsim |A|^{3/2}|B|^{3/2}/K$ 
for all $c\in C$. 
By dyadic pigeonholing, we can find an integer $N\in \N$ and a set $C_1\subset C$ such that for all $c\in C_1$, one has 
\[f_\delta(A, cB)\sim 2^N\frac{|A|^{3/2}|B|^{3/2}}{K}=\frac{|A|^{3/2}|B|^{3/2}}{\rho K}, ~~ \text{ where } \rho=2^{-N}.\]
Notice that $1/K\le \rho\le 1$ and the fact that $2^N\lesssim K$ gives $N\lesssim \log (K)$. Thus, 
\[|C_1|\gtrsim |C|\rho (\log K)^{-1}.\]
If $K>\delta^{-1}$, there is nothing to prove. So we may assume that $K\le \delta^{-1}$ which gives $|C_1|\gtrsim |C|\rho|\log \delta|^{-1}$.

For each $c\in C_1$, by Theorem \ref{thm2.3}, we can find subsets $A^c\subset A$ and $B^c\subset B$ such that 
\begin{align}\label{eq_subset_AB}
    |A^c|\gtrsim \frac{|A|}{(\rho K)^{m_1}},~~ |B^c|\gtrsim \frac{|B|}{(\rho K)^{m_1}}, ~~ |A^c+cB^c|_\delta\lesssim (\rho K)^{m_2}(|A|_\delta|B|_\delta)^{1/2}
\end{align}
for some positive constants $m_2> m_1$.
It is clear that we have 
$|A^c\times B^c|\gtrsim \frac{|A||B|}{(\rho K)^{2m_1}}$, for all $c\in C_1$.
Then Lemma \ref{intersection} ensures that there exists a subset $P\subset C_1\times C_1$ such that 
\[|(A^c\times B^c)\cap (A^{c'}\times B^{c'})|\gtrsim \frac{|A||B|}{(\rho K)^{4m_1}},\]
for all pairs $(c, c')\in P$ and $|P|\gtrsim \frac{|C_1|^2}{(\rho K)^{4m_1}}$. This also yields that there exists $c^*\in C_1$ and $C'\subset C_1$ such that $|C'|\gtrsim \frac{|C|\rho|\log\delta|^{-1}}{(\rho K)^{4m_1}}$ and 
\begin{align}\label{eq_intersection_AB}
    |(A'\times B')\cap (A^c\times B^c)|\gtrsim \frac{|A||B|}{(\rho K)^{4m_1}},
\end{align}
for all $c\in C'$, where we write $A'$ for $A^{c^*}$ and $B'$ for $B^{c^*}$.

For $c \in C_1$, applying the triangle inequality for $\delta$-covering \eqref{eq_tri_inq_delta_2} with $X=Z=cB^{(c)}$, $Y=A^{(c)}$, we have
\begin{equation}
    |B^{(c)}+B^{(c)}|_\delta\sim |cB^{(c)}+cB^{(c)}|_\delta\le \frac{|A^{(c)}+cB^{(c)}|_\delta^2}{|A^{(c)}|} \lesssim (\rho K)^{2m_2} \frac{|A||B|}{|A^{(c)}|} \lesssim (\rho K)^{2m_2+m_1}|B|.
\end{equation}
Similarly, for $c\in C_1$, using the triangle inequality \eqref{eq_tri_inq_delta_1} with $X=Z=cB^{(c)}$, $Y=-A^{(c)}$, one obtains
\begin{align*}
    |B^{(c)}-B^{(c)}|_\delta\sim |cB^{(c)}-cB^{(c)}|_\delta\le \frac{|A^{(c)}+cB^{(c)}|_\delta^2}{|A^{(c)}|} \lesssim (\rho K)^{2m_2} \frac{|A||B|}{|A^{(c)}|} \lesssim (\rho K)^{2m_2+m_1}|B|.
\end{align*}
Similarly, for all $c \in C_1$, one can check that
\begin{equation}
  \label{eq:22}
    |A^{(c)}\pm A^{(c)}|_\delta \lesssim  \frac{|A^{(c)}+cB^{(c)}|_\delta^2}{|B^{(c)}|} \lesssim (\rho K)^{2m_2+m_1}|A|.
\end{equation}
In particular, these estimates imply 
\begin{align}\label{eq_upperbound_Ac_Bc}
    |A'\pm A'|_\delta\lesssim (\rho K)^{2m_2+m_1}|A|, ~~ |B'\pm B'|_\delta\lesssim (\rho K)^{2m_2+m_1}|B|.
\end{align}
Now we turn to estimating $|c^*B'\pm cB'|_\delta$. Again, applying triangle inequalities for $\delta$-covering from Lemma \ref{lem_triangle_inq_delta} with suitable sets, one has
\begin{align*}
  |c^* B'\pm c B'|_\delta 
&\le \frac{|c^*B' + c(B^{c} \cap B')|_\delta|cB' + c(B^{c} \cap B')|_\delta}{|(B^{c} \cap B')|_\delta}\\
&\lesssim \frac{|c^*B' + cB^{c}|_\delta|B' + B'|_\delta}{|(B^{c} \cap B')|_\delta}\\
&\lesssim \frac{|A' + c^* B'|_\delta|A^{c} + c B^{c}|_\delta|B' + B'|_\delta}{|(A^{c} \cap A')|_\delta|(B^{c} \cap B')|_\delta}\\
&\lesssim (\rho K)^{4m_2+5m_1}|B|,
\end{align*}
where the last inequality follows from \eqref{eq_intersection_AB}. This gives that
\begin{equation}\label{eqn:cstarBcB}|B'\pm (c^*)^{-1}cB'|_\delta \lesssim (\rho K)^{4m_2+5m_1} |B|,~\forall~c\in C', \end{equation}
and 
$|B'\pm B'|_\delta\lesssim (\rho K)^{2m_2+m_1}|B|$.
\end{proof}
Using Theorem \ref{thm:struc} and Theorem \ref{bourgain}, the following $\epsilon$-energy variant is straightforward.
\begin{theorem}\label{thm_energy_I}
Let $\alpha,\beta, \gamma, \eta \in (0,1)$ with $\alpha\ge\beta$. There exist $\epsilon, \epsilon_0, \delta_0\in (0, 1/2]$, depending only on $\beta, \gamma, \eta$, such that the following holds for $\delta\in (0, \delta_0]$. Let $A, B\subset \delta \mathbb{Z}\cap [0,1]$ and $C\subset [1/2, 1]$ be $\delta$-separated sets with $\delta\in (0,\delta_0]$.
Assume in addition that $|A|=\delta^{-\alpha}$, $|B|=\delta^{-\beta}$, $|C| = \delta^{-\gamma}$, and 
\begin{align*}
    &|B \cap B(x,r)| \lesssim r^{\eta}|B|,~
|C \cap B(x,r)| \lesssim r^{\gamma}|C|, \quad~\forall x\in \mathbb{R}, ~\delta\le r\le \delta^{\epsilon_0}.
\end{align*}
Suppose that 
\begin{equation*}\label{eq_KABC}
    \sum_{c\in C} |\{(a_1, a_2, b_1, b_2)\in A^2\times B^2: |(a_1 +cb_1) - (a_2 + cb_2)|\le \delta\}|_\delta = \frac{1}{K}\cdot |A|_{\delta}^{3/2}|B|_{\delta}^{3/2}|C|.
\end{equation*}
Then $K$ can be bounded from below by $\delta^{-\epsilon}$.
\end{theorem}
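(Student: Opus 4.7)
The plan is to argue by contradiction. Assume $K < \delta^{-\epsilon}$ for a sufficiently small $\epsilon>0$ (depending only on $\beta,\gamma,\eta$, to be fixed at the end), and derive a contradiction by pitting Theorem \ref{thm:struc} against Theorem \ref{bourgain}. The former will supply subsets for which \emph{every} dilate $cB'$ with $c\in C'$ produces a small sum set against $B'$, while the latter forces \emph{at least one} such dilate to produce a large sum set. These are incompatible once $\epsilon$ is small enough.

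First I apply Theorem \ref{thm:struc} to obtain $B'\subset B$, $C'\subset C$, $c^*\in C$, and $\rho\in[1/K,1)$ with the stated bounds. Since $c^*\in[1/2,1]$, rescaling by $(c^*)^{-1}$ costs at most a constant factor in every $\delta$-covering number, so after setting $C'':=\tfrac12 (c^*)^{-1}C'\subset[0,1]$ one obtains
\[
|B'+c'B'|_\delta \;\lesssim\; K^{4m_2+6m_1}|B'|, \qquad \forall\,c'\in C''.
\]
Next I verify that $B'$ and the uniform probability measure $\nu$ on the $\delta$-neighborhood of $C''$ satisfy the hypotheses of Theorem \ref{bourgain} with parameters depending only on $\beta,\gamma,\eta$. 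Using $|B'|\gtrsim \delta^{-\beta}K^{-m_1}$ and $|C''|=|C'|\gtrsim \delta^{-\gamma}K^{-(4m_1+1)}|\log\delta|^{-1}$, the Frostman conditions on $B$ and $C$ propagate to
\[
|B'\cap B(x,r)| \;\lesssim\; r^{\eta}K^{m_1}|B'|, \qquad \nu(B(x,r)) \;\lesssim\; r^{\gamma}K^{4m_1+1}|\log\delta|.
\]
Both upgrade to genuine Frostman bounds with, say, exponents $\eta/2$ and $\gamma/2$ on the range $\delta\le r\le \delta^{\epsilon_0}$, provided $\epsilon/\epsilon_0$ is taken small enough in terms of $\eta$ and $\gamma$ (the polylogarithmic loss being absorbed into any $\delta^{o(1)}$ slack). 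Bourgain's theorem then produces $c\in\spt(\nu)$ with $|B'+cB'|_\delta\ge \delta^{-\epsilon^*}|B'|$ for some $\epsilon^*=\epsilon^*(\beta,\gamma,\eta)>0$, and the $\delta$-translation argument sketched right after Theorem \ref{bourgain} transfers this lower bound to a nearby $c'\in C''$:
\[
|B'+c'B'|_\delta \;\gtrsim\; \delta^{-\epsilon^*}|B'|.
\]

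Comparing with the upper bound above forces $\delta^{-\epsilon^*}\lesssim K^{4m_2+6m_1}$, and choosing $\epsilon<\epsilon^*/(4m_2+6m_1)$—together with the earlier constraints on $\epsilon$ from the Frostman propagation—yields the desired contradiction for all sufficiently small $\delta$. The main obstacle is exactly this balancing act: $B'$ and $\nu$ each lose a polynomial factor in $K$ from their Frostman exponents when passing to the subsets produced by Theorem \ref{thm:struc}, and these losses must be absorbed by restricting the scale to $r\le\delta^{\epsilon_0}$ and by shrinking the exponents fed to Theorem \ref{bourgain}. No new combinatorial input beyond the structural theorem and Bourgain's $A+cA$ theorem is needed; the task is essentially bookkeeping, which is why the paper describes the reduction as ``straightforward''.
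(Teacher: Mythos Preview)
Your proposal is correct and follows essentially the same route as the paper: apply Theorem~\ref{thm:struc} to obtain $B',C',c^*$, verify that the (degraded) Frostman conditions on $B'$ and on a measure supported on $(c^*)^{-1}C'$ still hold with halved exponents provided $K$ is below a threshold of the form $\delta^{-c\epsilon_0}$, invoke Theorem~\ref{bourgain} to force $|B'+cB'|_\delta\ge\delta^{-\epsilon^*}|B'|$ for some $c$, and compare with the upper bound $K^{4m_2+6m_1}|B'|$. The only cosmetic wrinkle is your extra factor $\tfrac12$ in $C''$: the structural theorem controls $|B'+(c^*)^{-1}cB'|_\delta$, not $|B'+\tfrac12(c^*)^{-1}cB'|_\delta$, so either drop the $\tfrac12$ (and note that Bourgain's theorem works just as well for $\spt(\nu)\subset[1/2,2]$) or pay one more Pl\"unnecke step, which is harmless.
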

\begin{proof}
    Using Theorem \ref{thm:struc}, we know that there exist subsets $B'\subset B$, $ C'\subset C$, an element $c^*\in C$, and $\frac{1}{K}\le \rho<1$ such that 
\[|c^*B'+cB'|_\delta\lesssim (\rho K)^{4m_2+6m_1}|B'|, ~~\forall c\in C',\]
\[|B'\pm B'|_\delta\lesssim (\rho K)^{2m_2+2m_1}|B'|,\]
\[|B'|\gtrsim \frac{|B|}{(\rho K)^{m_1}}, |C'|\gtrsim \frac{|C|\rho|\log\delta|^{-1}}{(\rho K)^{4m_1}}.\]

To find a lower bound for $K$, we apply Theorem \ref{bourgain} with $A:=B'$ and $C:=C'$. 

Applying Theorem \ref{bourgain} with parameters $\beta/2, \gamma/2$, there exist $\overline{\epsilon}, \overline{\epsilon_0}, \overline{\delta_0}>0$, depending only on $\beta/2, \gamma/2$, such that the following holds for all $\delta\in (0, \overline{\delta_0}]$. If 
\[|B'\cap B(x, r)|\lesssim r^{\beta/2}|B'|,\]
and 
\[|C'\cap B(x, r)|\lesssim r^{\gamma/2}|C'|,\]
for $\delta\le r\le \delta^{\overline{\epsilon_0}}$, then there exists $c\in C'$ such that $|B'+cB'|\gtrsim \delta^{-\overline{\epsilon}}|B'|$. 

If $K>\delta^{-\overline{\epsilon_0}\beta/2}$ or $K>\delta^{-\overline{\epsilon_0}\gamma/8}|\log \delta|^{-1/4}$, then we are done. Thus, we can assume that $K\ll \delta^{-\overline{\epsilon_0}\beta/2}$ and $ K \ll \delta^{-\overline{\epsilon_0}\gamma/8}|\log\delta|^{-1/5}$.

Next, we check the non-concentration conditions of $B'$ and $C'$. We first have 
\[|B'\cap B(x, r)|\le r^\beta|B|\le r^\beta K |B'|.\]
 This gives that 
\[|B'\cap B(x, r)|\ll r^\beta K|B'|\le r^{\beta/2}|B'|,\]
for $\delta\le r\le \delta^{\overline{\epsilon_0}}$.

Similarly, 
\[|C'\cap B(x, r)|\ll r^\gamma |C|\le r^\gamma |C'|K^4|\log \delta|\ll r^{\gamma/2}|C'|,\]
for $\delta\le r\le \delta^{\overline{\epsilon_0}}$. Note that we used the fact that $1/K\le \rho\le 1$. 

Thus, as above, there exists $c\in C'$ such that
\[|B'+cB'|_\delta\gtrsim \delta^{-\overline{\epsilon}}|B'|.\]
Notice that we may replace the set $C'$ above by $(c^*)^{-1}C'$. On the other hand, we know from Theorem \ref{thm:struc} (recall we may take $m_2=7$ and $m_1 = 1$) that
\[|c^*B'+cB'|_\delta\lesssim K^{34}|B'|.\]
This means that $K^{34}\gtrsim \delta^{-\overline{\epsilon}}$, which gives $K\gtrsim \delta^{-\overline{\epsilon}/34}$.

Choose $\delta_0=\overline{\delta_0}$, $\epsilon_0=\overline{\epsilon_0}$, and $\epsilon$ with 
\[\delta^{-\epsilon}=\min\left\lbrace \delta^{-\overline{\epsilon}}, ~\delta^{-\overline{\epsilon_0}\beta/2}, ~\delta^{-\overline{\epsilon_0}\gamma/8}\right\rbrace.\]
This completes the proof. 
\end{proof}
\begin{remark}
In the above proof, if we apply Theorem \ref{thmO1} to the set $A^c+cB^c$ from (\ref{eq_subset_AB}), then a lower bound for $K$ is obtained. However, we would need the condition that $\gamma>(\alpha-\beta)/(1-\beta)$. 
\end{remark}

\section{Proof of Theorem \ref{thm_energy_III}}
Using Theorem \ref{thm:struc}, we know that there exist subsets $B'\subset B$, $ C'\subset C$, an element $c^*\in C$, and $\frac{1}{K}\le \rho<1$ such that 
\[|c^*B'+cB'|_\delta\lesssim (\rho K)^{4m_2+6m_1}|B'|, ~~\forall c\in C',\]
\[|B'\pm B'|_\delta\lesssim (\rho K)^{2m_2+2m_1}|B'|,\]
\[|B'|\gtrsim \frac{|B|}{(\rho K)^{m_1}}, |C'|\gtrsim \frac{|C|\rho|\log\delta|^{-1}}{(\rho K)^{4m_1}}.\]
Notice that it is not possible to expect that the set $c^*B'+cB'$ is $\delta$-separated for each $c\in C'$. This means that for each $c$, the size of $c^*B'+cB'$ is not the same as its $\delta$-covering. So to proceed further, we start noting the following fact that 
\begin{equation}\label{eq_boundB_cB}
    |B'+cB'|\le |B'|^2\,,\quad\quad \text{ for all } c\in C.
\end{equation}
We remark here that it is not possible to expect $|B'+cB'|\le |B'|^{2-\epsilon}$ for any $\epsilon>0$ when $|C'|$ is much larger than $|B'|$. Indeed, if $N$ is the number of tuples $(c, b_1, b_2, b_3, b_4)\in C'\times B'\times B'\times B'\times B'$ such that $b_1+cb_2=b_4+cb_3$, by the Cauchy-Schwarz inequality, one has $N$ is at least $|C'||B'|^{2+\epsilon}$. We observe that the equation $b_1+cb_2=b_4+cb_3$
is equivalent to $b_1-b_4=c(b_2-b_3).$
For a fixed $b_4$, the above identity gives an incidence between the line defined by $y=c(x-b_3)+b_4$
and the point $(b_2, b_1)\in B'\times B'$. So by the Szemer\'{e}di-Trotter theorem \cite{SzeTrot} and taking the sum over all $b_4\in B'$, $N$ is at most $|B'|^3|C'|^{2/3}+|B'|^2|C'|,$
which is smaller than $|B'|^{2+\epsilon}|C'|$ whenever $|C'|\ge |B'|^{3-3\epsilon}$.
\\
With the fact \eqref{eq_boundB_cB} in hand, we can apply Lemma \ref{lem:BourgainIntersection} to find $b_1, b_2, b_3\in B'$ satisfying 
\begin{align}\label{eq_card_D}
    \left\vert (B'-b_1)\cap (b_2-b_3)C' \right\vert\ge \frac{|C'|}{|B'|}\gtrsim \frac{|C||\log \delta|^{-1}}{K^{4m_1}|B|}.
\end{align}
Define 
\begin{align*}
    D:=(B'-b_1)\cap (b_2-b_3)C'.
\end{align*}
If $K^{4m_1}\ge \frac{|C|}{|B||\log \delta|}$, i.e. 
\begin{equation}\label{m0}
    K\ge \delta^{-\frac{\gamma-\beta}{4}}|\log\delta|^{1/4},
\end{equation}
then we are done. Otherwise, we can assume that $|C|>K^{4m_1}|B||\log\delta|$. This condition implies that $D$ is non-empty.

Let $\kappa=\epsilon_0\in (0,1/2)$ be parameters that will be specified later. 
To proceed further, we need the following two lemmas.

\begin{lemma}\label{lem3.2}
Let $d_1=c_1-c_2$ and $d_2=c_3-c_4$ with $c_i\in C'$. Assume that $|c_1-c_2|\le |c_3-c_4|$ and $|c_3-c_4|>\delta^\kappa=\delta^{\epsilon_0}$. Then, for any positive integer $k\ge 1$,
\begin{enumerate}
    \item[(i)] With the non-concentration on $C$, one has 
    \[|d_1D+\underbrace{d_2D+\cdots+d_2D}_{k~\mbox{terms}}|_{\delta}\lesssim \frac{|B|^{4k+4}K^{(8m_2+18m_1)(k+1)}|\log \delta|^{2k+2}}{|b_2-b_3||C|^{2k+1}}\cdot \max\{|d_1|, |d_2|\}\delta^{\epsilon_0(\gamma-1)}.\]
    \item[(ii)] With the non-concentration on $B$, one has
    \[|d_1D+\underbrace{d_2D+\cdots+d_2D}_{k~\mbox{terms}}|_{\delta}\lesssim \frac{|B|^{4k+5}K^{(8m_2+18m_1)(k+1)}|\log\delta|^{2k+2}}{|C|^{2k+2}}\cdot \max\{|d_1|, |d_2|\} \delta^{\epsilon_0(\eta-1)}.\]
\end{enumerate}
\end{lemma}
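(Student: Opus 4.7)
The plan is to bound $|d_1 D + d_2 D + \cdots + d_2 D|_\delta$ by combining the structural estimates of Theorem~\ref{thm:struc} with iterated Ruzsa-type inequalities at scale $\delta$, the non-concentration of $C$ (for Part~(i)) or of $B$ (for Part~(ii)) entering through a lower bound on $|D|_r$ at a suitable coarse scale $r \ge \delta$.

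First, I would use $D \subset B' - b_1$ together with $d_j = c_{2j-1} - c_{2j}$ (where $c_{2j-1}, c_{2j} \in C'$) to embed $d_1 D + k \cdot d_2 D$, up to translation, into a sumset of $2(k+1)$ signed dilates of $B'$ by elements of $C'$. The structural bound $|c^* B' + c B'|_\delta \lesssim (\rho K)^{4m_2+6m_1}|B'|$ from Theorem~\ref{thm:struc}, combined via Ruzsa's triangle inequality (Lemma~\ref{lem_triangle_inq_delta}) with $|B' \pm B'|_\delta \lesssim (\rho K)^{2m_2+2m_1}|B'|$, yields $|c^* B' \pm c B'|_\delta \lesssim (\rho K)^{6m_2+8m_1}|B'|$ for every $c \in C'$. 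The discretized Pl\"unnecke inequality (Lemma~\ref{discretizedPlunnecke}) applied with $X = c^* B'$ and the $2(k+1)$ signed dilates as the $Y_i$'s then gives a preliminary sumset estimate whose $K$-exponent is, after careful bookkeeping, of the claimed form $(8m_2 + 18m_1)(k+1)$.

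The factors $|B|^{4k+4}/|C|^{2k+1}$, $|\log\delta|^{2k+2}$, $|b_2-b_3|^{-1}$, and $\max\{|d_1|,|d_2|\}\delta^{\epsilon_0(\gamma-1)}$ are then introduced through iterated Ruzsa triangle inequalities carrying $|D|_r$ at a coarse scale in the denominator. For Part~(i), the containment $D \subset (b_2-b_3)C'$ together with the Frostman condition $|C \cap B(x,s)| \lesssim s^{\gamma}|C|$ gives the lower bound
\[
|D|_r \gtrsim \frac{|D|\,|b_2-b_3|^{\gamma}}{r^{\gamma}|C|}
\]
whenever $\delta \le r/|b_2-b_3| \le \delta^{\epsilon_0}$. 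Choosing $r = \delta^{\epsilon_0}|b_2-b_3|$ and substituting the lower bound $|D| \gtrsim |C|/(K^{4m_1}|B||\log\delta|)$ from~\eqref{eq_card_D} produces $|D|_r \gtrsim 1/(K^{4m_1}|B||\log\delta|\,\delta^{\epsilon_0\gamma})$. Iterating Ruzsa's inequality $2(k+1)$ times with this denominator, the powers $|B|^{4k+4}$, $|C|^{-(2k+1)}$, $|\log\delta|^{2k+2}$, and the extra $K^{O(k)}$ factor emerge from the iterations. A final scale-conversion step, exploiting that $d_1 D + k d_2 D$ lies in an interval of length $\lesssim (k+1)\max\{|d_1|,|d_2|\}$, produces the factor $\max\{|d_1|,|d_2|\}\,\delta^{\epsilon_0(\gamma-1)}/|b_2-b_3|$.

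The main obstacle is the precise exponent bookkeeping: one must track simultaneously the powers of $K$ (from Theorem~\ref{thm:struc} and the Pl\"unnecke--Ruzsa steps), of $|B|$ and $|C|$ (from the iterations and from \eqref{eq_card_D}), of $|\log\delta|$, and of the scale-conversion factor $\delta^{\epsilon_0}$ so that they combine into the target expression. Part~(ii) proceeds along the same lines, but uses the containment $D \subset B' - b_1$ and the Frostman condition on $B$ at the coarse scale $r = \delta^{\epsilon_0}$, giving $|D|_r \gtrsim |D|/(\delta^{\epsilon_0\eta}|B|)$; this substitutes the factor $\delta^{\epsilon_0(\eta-1)}$ for $\delta^{\epsilon_0(\gamma-1)}$ and introduces one additional power each of $|B|$ (in the numerator) and of $|C|^{-1}$ (in the denominator), yielding the claimed $|B|^{4k+5}/|C|^{2k+2}$.
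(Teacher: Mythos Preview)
Your outline has the right ingredients (Pl\"unnecke, the structural bounds, non-concentration at a coarse scale) but misses the one organizing idea that makes them fit together, and as written the mechanism you describe cannot produce the stated bound.

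The paper's proof hinges on a single ``diameter trick'': since $S:=d_1D+\underbrace{d_2D+\cdots+d_2D}_{k}$ is contained in an interval of length $\sim d:=\max\{|d_1|,|d_2|\}$, one has, for any $\delta$-refinement $D'$ of $D$,
\[
|S|_\delta \;\lesssim\; \frac{|c^*D'+S|_\delta}{|c^*D'|_d}\;\le\;\frac{|c^*D'+c_1D-c_2D+c_3D-\cdots-c_4D|_\delta}{|c^*D'|_d}.
\]
The coarse scale enters \emph{once}, through the denominator $|c^*D'|_d$, and it is precisely here that non-concentration of $C$ (via $D\subset(b_2-b_3)C'$) or of $B$ (via $D\subset B'-b_1$) is invoked to bound $|c^*D'|_d$ from below. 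Your proposal replaces this by ``iterated Ruzsa triangle inequalities carrying $|D|_r$ at a coarse scale in the denominator'' and a separate ``final scale-conversion step''; but Ruzsa at scale $\delta$ carries $|\cdot|_\delta$ in the denominator, not $|\cdot|_r$, so there is no coherent mechanism in what you wrote that simultaneously delivers a $\delta$-covering bound and inserts the factor $\max\{|d_1|,|d_2|\}\,\delta^{\epsilon_0(\gamma-1)}/|b_2-b_3|$. Your fixed coarse scale $r=\delta^{\epsilon_0}|b_2-b_3|$ is also not the right one; the natural scale is $d$, the diameter of $S$.

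The second gap is the pivot set in Pl\"unnecke. You take $X=c^*B'$, which gives a bound of the shape $|d_1D+kd_2D|_\delta\lesssim K^{O(k)}|B'|$ with no $|C|$ anywhere. In the paper one takes $X=c^*D'$: the individual ratios $K_i=|c^*D'+c_jD|_\delta/|c^*D'|_\delta$ are bounded using $|c^*D'+cD|_\delta\le|c^*B'+cB'|_\delta\lesssim(\rho K)^{4m_2+5m_1}|B|$ in the numerator and $|c^*D'|_\delta\gtrsim|C|/(K^{4m_1}|B||\log\delta|)$ in the denominator. It is this denominator, appearing $2k+1$ times in Pl\"unnecke, that produces the factor $|C|^{-(2k+1)}$ (and, combined with the single $|c^*D'|_d$ from the diameter trick, the factors $|B|^{4k+4}$, $|\log\delta|^{2k+2}$, and the total $K$-exponent $(8m_2+18m_1)(k+1)$). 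With $X=c^*B'$ none of this structure is visible.
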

\begin{proof}
First, observe that for $k$ bounded, the set $d_1D+\underbrace{d_2D+\cdots+d_2D}_{k~\mbox{terms}}$ is contained in an interval of length $d$, with $d\sim \max\{|d_1|, |d_2|\}$.

Let  $c^* \in C$ as in Theorem \ref{thm_energy_I}, and let $D'$ be a $\delta$-refinement of $D$. Then we have 
\begin{align}\label{eq_energy:3.2.1}
|d_1D+d_2D+\cdots+d_2D|_{\delta}&\lesssim |c^*D'|_d^{-1}|c^*D'+d_1D+d_2D+\cdots+d_2D|_{\delta} \\ \nonumber &\le \frac{|c^*D'+c_1D - c_2 D+c_3D - c_4D+\cdots+c_3D-c_4D|_{\delta}}{|c^*D'|_d}.\end{align}
Thus, the task is now to estimate two terms $|c^*D'|_d$ and $|c^*D'+c_1D -c_2D+c_3D - c_4D+\cdots+c_3D-c_4D|_{\delta}$. 

We will handle the second term first. By Theorem \ref{thm_energy_I} and the fact that $D'\subset B'-b_1$, one has
\[|c^* D'+cD|_{\delta}\le|c^* B'+cB'|_{\delta}\le (\rho K)^{4m_2+5m_1}|B|.\]
Then we use Lemma \ref{discretizedPlunnecke} to bound the second term, namely, it is at most
\begin{align}\label{eq_est_second term}
\frac{K^{(4m_2+5m_1)(2k+2)}|B|^{2k+2}}{|c^*D'|_{\delta}^{2k+1}} \lesssim \frac{|B|^{4k+3}|\log \delta|^{2k+1}K^{(4m_2+9m_1)(2k+1)+4m_2+5m_1}}{|C|^{2k+1}},
\end{align}
where the latter follows by $|c^*D|_{\delta}\sim |D|_{\delta}\gtrsim |D|\gtrsim \frac{|C||\log\delta|^{-1}}{|B|K^{4m_1}}$.

For the first term, there are two ways to bound it, which will give two different bounds. One way is to use the non-concentration condition on $C$, and the other way is to use the non-concentration condition on $B$.

{\bf (i) First approach: Non-concentration on $C$.}

Recall that $D'\subset (b_2-b_3)C'$ is a $\delta$-refinement of $D$ and $|b_2-b_3|\ge \delta$ by the $\delta$-separated property of $B$. In particular, this yields that 
\[|D'|_d\sim \left|(b_2-b_3)^{-1}D\right|_{d/(|b_2-b_3|)},\]
where $(b_2-b_3)^{-1}D\subset C'$.

Let $I$ be an arbitrary interval of length $d/|b_2-b_3|$. From assumption, we know that 
$\frac{d}{|b_2-b_3|}\ge d>\delta^{\kappa}=\delta^{\epsilon_0}$.
Thus, the non-concentration condition on the set $C$ implies that
\[|C\cap I|\lesssim \frac{d}{|b_2-b_3|}|C|\delta^{\epsilon_0(\gamma-1)}\sim d |b_2-b_3|^{-1} |C|\delta^{\epsilon_0(\gamma-1)}.\]
 Since $c^*$ is greater than $1/2$, using the above estimate and \eqref{eq_card_D}, we will have an estimate for $|c^*D'|_d$, namely
\[|c^*D'|_d\gtrsim \frac{|D|}{d |b_2-b_3|^{-1} |C|\delta^{\epsilon_0(\gamma-1)}}\gtrsim \frac{|\log\delta|^{-1}|b_2-b_3|}{d \delta^{\epsilon_0(\gamma-1)}|B|K^{4m_1}}.\]

Substituting this and \eqref{eq_est_second term} into \eqref{eq_energy:3.2.1}, one has the upper bound 
\[|d_1D+d_2D+\cdots+d_2D|_{\delta}\lesssim \frac{|B|^{4(k+1)}K^{(8m_2+18m_1)(k+1)}|\log \delta|^{2k+2}}{|b_2-b_3||C|^{2k+1}}\cdot \max\{|d_1|, |d_2|\}\delta^{\epsilon_0(\gamma-1)}.\]

{\bf (ii) Second approach: Non-concentration on $B$}

From hypothesis, we know that $|c_3-c_4|\ge \delta^{\epsilon_0}$. By a similar argument, using the non-concentration assumption on $B$ instead, we have
\[|c^*D'|_d\gtrsim \frac{|D|}{d\delta^{\epsilon_0(\eta-1)}|B|}\gtrsim \frac{|C||\log \delta|^{-1}}{|B|^2K^{4m_1}d\delta^{\epsilon_0(\eta-1)}}.\]
Putting this into \eqref{eq_energy:3.2.1}, we obtain the bound
\[\frac{|B|^{4k+5}K^{(8m_2+18m_1)(k+1)}|\log\delta|^{2k+2}}{|C|^{2k+2}}\cdot \max\{|d_1|, |d_2|\} \delta^{\epsilon_0(\eta-1)}.\]
We complete the proof of the lemma.
\end{proof}
We remark that the above lemma is most effective when $k=1$. For $k>1$, we have the following refinement.  
\begin{lemma}\label{lem3.3}
Let $d_1=c_1-c_2$ and $d_2=c_3-c_4$ with $c_i\in C'$. Assume that $|c_1-c_2|\le |c_3-c_4|$ and $|c_3-c_4|>\delta^\kappa=\delta^{\epsilon_0}$. Then, for a bounded positive integer $k\ge 1$ and for any $\epsilon>0$, there exists $\widetilde{D}\subset D$ with $|\widetilde{D}|\ge \delta^\epsilon |D|$, such that the following holds.
\begin{enumerate}
    \item[(i)] With the non-concentration on $C$, one has 
    \begin{align*}
        &|d_1\widetilde{D}+\underbrace{d_2\widetilde{D}+\cdots+d_2\widetilde{D}}_{k~\text{terms}}|_{\delta}\\&\lesssim \delta^{-O(\epsilon)}\frac{|B|^{2k+6}K^{(2m_2+5m_1)k+14m_2+31m_1}|\log\delta|^{k+3}}{|C|^{k+2}|b_2-b_3|}\cdot \max\{|d_1|, |d_2|\}\delta^{\epsilon_0(\gamma-1)}.
    \end{align*}
    \item[(ii)] With the non-concentration on $B$, one has
    \begin{align*}
    &|d_1\widetilde{D}+\underbrace{d_2\widetilde{D}+\cdots+d_2\widetilde{D}}_{k~\text{terms}}|_{\delta}\\&\lesssim \delta^{-O(\epsilon)}\frac{|B|^{2k+7}K^{(2m_2+5m_1)k+14m_2+31m_1}|\log\delta|^{k+3}}{|C|^{k+3}}\cdot \max\{|d_1|, |d_2|\}\delta^{\epsilon_0(\eta-1)}.
    \end{align*}
\end{enumerate}
\end{lemma}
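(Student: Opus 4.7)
The plan is to reprise the proof of Lemma~\ref{lem3.2}, but first replace $D$ by a refinement $\widetilde{D}\subset D$ with $|\widetilde{D}|\ge\delta^\epsilon|D|$ enjoying small doubling at \emph{every} scale $t\in[\delta,1]$. With this multi-scale bound in hand, each additional copy of $d_2\widetilde{D}$ should cost only $\delta^{-O(\epsilon)}K_D$, where $K_D$ is the doubling of $\widetilde{D}$, instead of the raw factor $(\rho K)^{4m_2+5m_1}|B|/|c^*\widetilde{D}|_\delta$ that would arise from applying Theorem~\ref{thm:struc} bluntly $k$ times. This is precisely what drops the per-$k$ exponents from $(8m_2+18m_1)$ and $4$ in Lemma~\ref{lem3.2} down to $(2m_2+5m_1)$ and $2$.

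To construct $\widetilde{D}$: since $D\subset B'-b_1$, we have $|D+D|_\delta\le|B'-B'|_\delta\lesssim(\rho K)^{2m_2+2m_1}|B'|$, while $|D|_\delta\gtrsim|C||\log\delta|^{-1}/(K^{4m_1}|B|)$. Hence
\[K_D\;:=\;\frac{|D+D|_\delta}{|D|_\delta}\;\lesssim\;(\rho K)^{2m_2+6m_1}\cdot\frac{|B|^2\,|\log\delta|}{|C|}.\]
Applying Lemma~\ref{lm21} (after the harmless dilation sending $D$ into $[1,2]$) produces $\widetilde{D}\subset D$ with $|\widetilde{D}|\gtrsim\delta^\epsilon|D|$ and $|\widetilde{D}\pm\widetilde{D}|_t\lesssim\delta^{-O(\epsilon)}K_D|\widetilde{D}|_t$ for every $\delta\le t\le 1$.

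Now repeat the argument of Lemma~\ref{lem3.2} with $\widetilde{D}$ in place of $D$: write
\[|d_1\widetilde{D}+k(d_2\widetilde{D})|_\delta\;\lesssim\;|c^*\widetilde{D}''|_d^{-1}\cdot|c^*\widetilde{D}''+c_1\widetilde{D}-c_2\widetilde{D}+k(c_3\widetilde{D}-c_4\widetilde{D})|_\delta,\]
with $\widetilde{D}''$ a $\delta$-refinement of $\widetilde{D}$ and $d\sim\max\{|d_1|,|d_2|\}$, and bound the denominator from below exactly as in Lemma~\ref{lem3.2}(i) (non-concentration on $C$) or (ii) (non-concentration on $B$), losing only $\delta^{O(\epsilon)}$. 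For the numerator, apply Lemma~\ref{discretizedPlunnecke} with base set $X=c^*\widetilde{D}''$: for the initial summands $\pm c_1\widetilde{D},\pm c_2\widetilde{D}$ together with a single pair $\pm c_3\widetilde{D},\mp c_4\widetilde{D}$ use the raw bound $|c^*\widetilde{D}''+c_i\widetilde{D}|_\delta\le|c^*B'+c_iB'|_\delta\lesssim(\rho K)^{4m_2+5m_1}|B|$, producing the $k$-independent constants $K^{14m_2+31m_1}|B|^6|\log\delta|^3/|C|^2$; for each of the remaining $k-1$ copies of $c_3\widetilde{D}-c_4\widetilde{D}$, use the decomposition $c_3\widetilde{D}-c_4\widetilde{D}\subset c_3(\widetilde{D}-\widetilde{D})+d_2\widetilde{D}$ together with Ruzsa's triangle inequality (Lemma~\ref{lem_triangle_inq_delta}) to reduce the incremental contribution to $|\widetilde{D}\pm\widetilde{D}|_t$ at scale $t\sim\delta/\max|c_i|$, which by the multi-scale bound above is $\lesssim\delta^{-O(\epsilon)}K_D\lesssim\delta^{-O(\epsilon)}K^{2m_2+6m_1}|B|^2|\log\delta|/|C|$. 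Combining gives the estimates in cases (i) and (ii).

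The main obstacle is implementing this ``cheap iteration'' step: since the $c_i\widetilde{D}$ are dilated copies of $\widetilde{D}$ (by factors $c_i\in[1/2,1]$), a straight Plünnecke with a single base set gives no improvement over Lemma~\ref{lem3.2}. The multi-scale doubling from Lemma~\ref{lm21} is what makes it possible to convert each scaled sumset $c_3\widetilde{D}-c_4\widetilde{D}$ into a $\widetilde{D}\pm\widetilde{D}$-type bound at the appropriate scale, which is the sole mechanism producing the improved per-$k$ exponents $(2m_2+5m_1)$ on $K$, $2$ on $|B|$, and $1$ on $|C|$. Tracking the $\delta^{-\epsilon}$ and $|\log\delta|$ factors through the iteration is otherwise bookkeeping.
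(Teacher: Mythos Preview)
Your overall strategy is right---construct $\widetilde{D}$ via Lemma~\ref{lm21} so that it has small doubling at every scale, and exploit this so that each additional copy in the iterated sum costs only $K_D$ rather than the raw Theorem~\ref{thm:struc} factor. But the implementation you sketch has a genuine gap in the ``cheap iteration'' step.

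The problem is your choice of Pl\"unnecke base $X=c^*\widetilde{D}''$. In Lemma~\ref{discretizedPlunnecke} the incremental factor from a summand $Y_i$ is $|X+Y_i|_\delta/|X|_\delta$; with $Y_i=c_3\widetilde{D}-c_4\widetilde{D}$ this quantity is \emph{not} controlled by $K_D$, because $c^*\widetilde{D}$ and $c_3\widetilde{D}$ are different dilations of $\widetilde{D}$ and there is no reason their sumset should be small (take $\widetilde{D}$ a $\delta$-progression and $c^*/c_3$ badly approximable). Your decomposition $c_3\widetilde{D}-c_4\widetilde{D}\subset c_3(\widetilde{D}-\widetilde{D})+d_2\widetilde{D}$ does not help: after Ruzsa you still face $|c^*\widetilde{D}''+c_3(\widetilde{D}-\widetilde{D})|_\delta$, which has the same defect. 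The multi-scale bound from Lemma~\ref{lm21} controls $|\widetilde{D}\pm\widetilde{D}|_t$, not sumsets of \emph{different} dilates of $\widetilde{D}$.

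The paper's remedy is to change the base: it runs Pl\"unnecke with $X=d_2\widetilde{D}$ for the iterated part. The point is that
\[
|d_2\widetilde{D}+d_2\widetilde{D}|_\delta\;=\;|\widetilde{D}+\widetilde{D}|_{\delta/|d_2|}\;\lesssim\;\delta^{-O(\epsilon)}K_D\,|\widetilde{D}|_{\delta/|d_2|}\;\sim\;\delta^{-O(\epsilon)}K_D\,|d_2\widetilde{D}|_\delta,
\]
so the same dilation appears on both sides and Lemma~\ref{lm21} applies directly at scale $\delta/|d_2|$. Pl\"unnecke then gives $|k(d_2\widetilde{D})|_\delta\lesssim(\delta^{-O(\epsilon)}K_D)^{k-1}|d_2\widetilde{D}|_\delta$. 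For the remaining piece $c^*\widetilde{D}+d_1\widetilde{D}$ the paper simply invokes the $k=1$ case of Lemma~\ref{lem3.2} (namely \eqref{eq_est_second term}) to bound $|c^*\widetilde{D}+d_1\widetilde{D}+d_2\widetilde{D}|_\delta$, and combines the two via one more application of Lemma~\ref{discretizedPlunnecke}, still with base $d_2\widetilde{D}$. The final division by $|c^*\widetilde{D}|_d$ is then exactly as you describe.

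A minor bookkeeping point: using $|D+D|_\delta\le|B'+B'|_\delta\lesssim(\rho K)^{2m_2+m_1}|B|$ (from the proof of Theorem~\ref{thm:struc}) rather than $(\rho K)^{2m_2+2m_1}|B'|$ saves a factor $K^{m_1}$ in $K_D$ and recovers the stated per-$k$ exponent $2m_2+5m_1$ instead of your $2m_2+6m_1$.
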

\begin{proof}
As in the previous proof, we identify a $\delta$-refinement $D'$ of $D$, which will be needed when applying Lemma \ref{discretizedPlunnecke} later. Then we apply Lemma \ref{lm21} to find a subset $\widetilde{D}\subset D'$ such that $|\widetilde{D}|\ge \delta^{O(\epsilon)}|D|$ for any given $\epsilon>0$, so that
\[|\widetilde{D}+\widetilde{D}|_{\delta/d_2}\lesssim \delta^{-O(\epsilon)}\frac{|D+D|_\delta}{|D|}|d_2\widetilde{D}|_\delta.\]
On the other hand, using Theorem \ref{thm_energy_I} and \eqref{eq_card_D}, we have
\[\frac{|D+D|_\delta}{|D|}\lesssim \frac{ K^{2m_2+m_1}|B|}{|D|}=\frac{K^{2m_2+5m_1}|B|^2|\log\delta|}{|C|}.\]
Thus, when considering the sum of $k$ terms, applying Lemma \ref{discretizedPlunnecke}, we have
\[|\underbrace{d_2\widetilde{D}+\cdots+d_2\widetilde{D}}_{k \text{ terms}}|_\delta
\lesssim\delta^{-O(\epsilon)}\frac{K^{(2m_2+5m_1)(k-1)}|B|^{2(k-1
)}|\log\delta|^{k-1}}{|C|^{k-1}}\cdot |d_2\widetilde{D}|_\delta.\]
Next, applying \eqref{eq_est_second term} from the proof of the previous lemma with $k=1$, one obtains
\[|c^\ast \widetilde{D}+d_1D+d_2D|_\delta\lesssim \frac{|B|^7K^{16m_2+32m_1}|\log\delta|^3}{|C|^3}.\]
Thus, 
\begin{align*}
&|c^*\widetilde{D}+d_1\widetilde{D}+\underbrace{d_2\widetilde{D}+\cdots+d_2\widetilde{D}}_{k \text{ terms}}|_\delta\\&\lesssim \frac{|d_2\widetilde{D}+d_2\widetilde{D}+\cdots+d_2\widetilde{D}|_\delta}{|d_2\widetilde{D}|_\delta}\cdot \frac{|c^*\widetilde{D}+d_1\widetilde{D}+d_2\widetilde{D}|_\delta}{|d_2\widetilde{D}|_\delta}\cdot |d_2\widetilde{D}|_\delta\\
    &\lesssim \delta^{-O(\epsilon)}\frac{|B|^7K^{16m_2+32m_1}|\log\delta|^3}{|C|^3}\cdot \frac{K^{(2m_2+5m_1)(k-1)}|B|^{2(k-1)}|\log\delta|^{k-1}}{|C|^{k-1}}.
\end{align*}
This gives the bound
\begin{equation}\label{eq_second_bound_2}
    \delta^{-O(\epsilon)}\frac{|B|^{2k+5}K^{(2m_2+5m_1)k+14m_2+27m_1}|\log\delta|^{k+2}}{|C|^{k+2}}.
\end{equation}
Now based on the same argument as in the previous proof, we use different non-concentration conditions to bound the following estimate
\[|d_1\widetilde{D}+\underbrace{d_2\widetilde{D}+\cdots+d_2\widetilde{D}}_{k \text{ terms}}|_\delta\lesssim |c^*\widetilde{D}|_\delta^{-1}|c^*\widetilde{D}+d_1\widetilde{D}+\underbrace{d_2\widetilde{D}+\cdots+d_2\widetilde{D}}_{k \text{ terms}}|_\delta.\]

{\bf (i) Non-concentration on $C$:}

Using the non-concentration condition on $C$, one has the first term is bounded below by
\[|c^*\widetilde{D}|_d\gtrsim \frac{|D|}{d |b_2-b_3|^{-1} |C|\delta^{\epsilon_0(\gamma-1)}}\gtrsim \frac{|\log\delta|^{-1}|b_2-b_3|}{|B|K^{4m_1}d \delta^{\epsilon_0(\gamma-1)}}.\]
Combining with the estimate for the second term \eqref{eq_second_bound_2}, we get
\[\delta^{-O(\epsilon)}\frac{|B|^{2k+6}K^{(2m_2+5m_1)k+14m_2+31m_1}|\log\delta|^{k+3}}{|C|^{k+2}|b_2-b_3|}\cdot \max\{|d_1|, |d_2|\}\delta^{\epsilon_0(\gamma-1)}.\]

{\bf (ii) Non-concentration on $B$:}

In this case, the only difference is that 
\[|c^*\widetilde{D}|_d\gtrsim \frac{|D|}{d\delta^{\epsilon_0(\eta-1)} |B|}\gtrsim \frac{|C||\log \delta|^{-1}}{|B|^2K^{4m_1}d\delta^{\epsilon_0(\eta-1)}}.\]
This follows by the assumption on $B$. Therefore, we obtain the bound 
\[\delta^{-O(\epsilon)}\frac{|B|^{2k+7}K^{(2m_2+5m_1)k+14m_2+31m_1}|\log\delta|^{k+3}}{|C|^{k+3}}\cdot \max\{|d_1|, |d_2|\}\delta^{\epsilon_0(\eta-1)}.\]
This completes the proof of the lemma.
\end{proof}

We now consider the set
$$ 
R:=R(\widetilde{D}) = \left\{ \frac{d_1 - d_2}{d_3 - d_4} : d_i \in \widetilde{D}, |d_3 - d_4| > \delta^\kappa \right\}. 
$$
Since $R(\widetilde{D})\subseteq R(C^{'})$ and $C^{'}\subset [1/2,1]$, we have that $R\subset [-\delta^{-\kappa}/2,\delta^{-\kappa}/2].$

\begin{lemma}\label{lem_nonempty_R}
Suppose $\widetilde{D}$ is non-empty. We have either $R$ is non-empty or
\begin{align}\label{eq_K_nonempty_R}
    K\gtrsim |\log \delta|^{O(1)}\delta^{-\frac{\gamma-2\beta+\kappa \eta}{4}}.
\end{align}
\end{lemma}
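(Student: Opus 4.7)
The strategy is to argue by contrapositive: assuming $R = \emptyset$, I will derive the claimed lower bound on $K$. Unpacking the definition of $R$, emptiness is equivalent to the statement that every pair $d_3, d_4 \in \widetilde{D}$ satisfies $|d_3 - d_4| \le \delta^{\kappa}$; equivalently, $\widetilde{D}$ is contained in a single interval $I$ of length at most $\delta^{\kappa}$.

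The key step is to transfer this concentration from $\widetilde{D}$ to $B$ via the chain of inclusions $\widetilde{D} \subset D \subset B' - b_1 \subset B - b_1$. The translate $\widetilde{D} + b_1$ therefore sits inside $B \cap (I + b_1)$. Since $\kappa = \epsilon_0$, the scale $r = \delta^{\kappa}$ lies at the boundary of the admissible range $[\delta, \delta^{\epsilon_0}]$ for the Frostman condition on $B$, yielding the upper bound
\[|\widetilde{D}| \le |B \cap (I + b_1)| \lesssim (\delta^{\kappa})^{\eta} |B| = \delta^{\kappa \eta} |B|.\]

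For the matching lower bound on $|\widetilde{D}|$, the $\delta$-refinement step inside Lemma~\ref{lem3.3} gives $|\widetilde{D}| \ge \delta^{O(\epsilon)} |D|$, while \eqref{eq_card_D} supplies $|D| \gtrsim |C| |\log \delta|^{-1} / (K^{4 m_1} |B|)$. Chaining the two inequalities, substituting $|C| = \delta^{-\gamma}$ and $|B| \approx \delta^{-\beta}$, and using the Balog--Szemer\'{e}di--Gowers constant $m_1 = 1$, I obtain
\[K^{4} \gtrsim \delta^{O(\epsilon)} \cdot \delta^{2\beta - \gamma - \kappa \eta} |\log \delta|^{-1},\]
which rearranges, after absorbing the $\delta^{O(\epsilon)}$ factor into the constant for sufficiently small $\epsilon$, to $K \gtrsim |\log \delta|^{O(1)} \delta^{-(\gamma - 2\beta + \kappa \eta)/4}$, as required.

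The only subtle point is verifying that $r = \delta^{\kappa}$ falls inside the admissible Frostman range for $B$; this is precisely why the threshold $\delta^{\kappa}$ was introduced in the definition of $R$ rather than the natural threshold $\delta$. Otherwise, the proof is a transparent chain of inclusions and size comparisons, requiring no additional combinatorial input beyond \eqref{eq_card_D} and the refinement furnished by Lemma~\ref{lem3.3}.
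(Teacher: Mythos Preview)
Your proof is correct and follows essentially the same route as the paper: both arguments observe that $R=\emptyset$ forces $\widetilde{D}$ to have diameter at most $\delta^{\kappa}$, invoke the Frostman condition on $B$ at scale $r=\delta^{\kappa}=\delta^{\epsilon_0}$ to bound $|\widetilde{D}|\lesssim \delta^{\kappa\eta}|B|$, and compare this against the lower bound coming from \eqref{eq_card_D}. Your handling of the $\delta^{O(\epsilon)}$ loss from passing $D\to\widetilde{D}$ is in fact slightly more explicit than the paper's (which silently folds it into the implicit constants); just note that one does not literally ``absorb $\delta^{O(\epsilon)}$ into the constant'' but rather carries it forward into the global $\delta^{O(\epsilon)}$ prefactor appearing in Theorem~\ref{thm_energy_III}.
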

\begin{proof}
To prove that $R$ is non-empty, it is enough to show that there are at least two elements $d_3, d_4\in \widetilde{D}$ such that $|d_3-d_4|>\delta^{\kappa}=\delta^{\epsilon_0}$. Since the set $\widetilde{D}$ is assumed to be non-empty, there exists $d_3\in \widetilde{D}$. If $|\widetilde{D}|\gtrsim \delta^{\epsilon_0\eta}|B|$, then the existence of $d_4$ follows from the non-concentration of $B$. Otherwise, from the lower bound on the size of $\widetilde{D}$ in \eqref{eq_card_D}, we have 
\[K^{4m_1}\gtrsim \frac{|C||\log\delta|^{-1}}{|B|^2\delta^{\epsilon_0\eta}}.\]
From this, we will get the lower bound for $K$ and complete the proof of the lemma.
\end{proof}

 Note that we also have $0,1\in R$. Choose a positive integer $m$ so that $2^{-m}\sim\delta^{1-2\kappa}|b_2-b_3|$. Define $s= 2^{-m}$.

The following result was proved in \cite{GKZ}, and it turns out to be useful for our proofs.
\begin{lemma}\label{denseVsGapLemma}
At least one of the following two statements must hold.
\begin{itemize}
\item[(A):] There exists a point $r\in R\cap [0,1]$ with
$$
\max\Big\{\operatorname{dist}(r/2,\ R),\ \ \operatorname{dist}\Big(\frac{r+1}{2},\ R\Big)\Big\}\ge s.
$$
\item[(B):] $|R\cap [0,1]|_s \gtrsim s^{-1}.$
\end{itemize}
\end{lemma}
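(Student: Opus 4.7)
My plan is to prove the lemma by contrapositive: assuming (A) fails---so for every $r \in R \cap [0,1]$, both $T_0(r) := r/2$ and $T_1(r) := (r+1)/2$ lie within distance $s$ of $R$---I derive (B). The strategy is a dyadic tree construction anchored at the endpoints: as already noted in the excerpt, $0, 1 \in R$, and the maps $T_0, T_1$ are the two inverse branches of the doubling map whose iterated compositions starting from $\{0, 1\}$ hit exactly the dyadic rationals in $[0,1]$.

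I build, by induction on $n \ge 0$, approximations $p_j^{(n)} \in R$ to the dyadic rationals $j/2^n$ (for $0 \le j \le 2^n$) with $|p_j^{(n)} - j/2^n| \le e_n$, maintaining the anchors $p_0^{(n)} = 0$ and $p_{2^n}^{(n)} = 1$ throughout. Given $p_j^{(n)} \in R \cap [0,1]$, the failure of (A) supplies $p_j^{(n+1)} \in R$ within $s$ of $T_0(p_j^{(n)})$---which approximates $j/2^{n+1}$---and $p_{j+2^n}^{(n+1)} \in R$ within $s$ of $T_1(p_j^{(n)})$, which approximates $(j+2^n)/2^{n+1}$. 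Because $T_0, T_1$ are $\tfrac{1}{2}$-contractions, the errors satisfy $e_{n+1} \le \tfrac{1}{2}\, e_n + s$; combined with $e_0 = 0$ this gives $e_n \le 2s$ for all $n$. The one subtlety is that a propagated $p_j^{(n)}$ may drift up to $2s$ outside $[0,1]$, which would prevent applying the hypothesis at the next step. I handle this by restricting propagation to indices with $j/2^n \in [4s, 1-4s]$ (forcing $p_j^{(n)} \in [0,1]$) while keeping the endpoint anchors $p_0^{(n)} = 0$ and $p_{2^n}^{(n)} = 1$ to re-seed the recursion from $0$ and $1$ at each level; this discards only a constant fraction of the indices.

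Stopping at $n^\ast$ with $2^{-n^\ast} \sim s$, I obtain $\gtrsim 2^{n^\ast} \sim s^{-1}$ interior approximations $p_j^{(n^\ast)} \in R \cap [0,1]$. Thinning to every fifth index produces $\gtrsim s^{-1}$ pairwise $s$-separated elements in $R \cap [0,1]$, since $|j - j'| \ge 5$ implies $|p_j^{(n^\ast)} - p_{j'}^{(n^\ast)}| \ge |j-j'| \cdot 2^{-n^\ast} - 2 e_{n^\ast} \ge 5s - 4s = s$. Consequently $|R \cap [0,1]|_s \gtrsim s^{-1}$, which is exactly (B). I expect no deep obstacle here: the argument is a standard iterated-contraction/dyadic-approximation scheme, and the only mildly delicate point is the boundary bookkeeping described above.
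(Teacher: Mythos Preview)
Your proof is correct. The paper does not supply its own argument for this lemma, instead citing \cite{GKZ}; your dyadic tree construction---iterating the contractions $T_0, T_1$ from the anchors $0,1\in R$ to approximate all dyadics at scale $s$, with the error recursion $e_{n+1}\le \tfrac{1}{2}e_n+s$---is precisely the approach used there, including the boundary bookkeeping.
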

As in \cite{GKZ}, we refer to the case $(A)$ as the ``gap" case, and $(B)$ as the ``dense" case. We proceed to obtain lower bounds for $K$ corresponding to each case $(A)$ and $(B)$.

\subsection{Dense case}
Suppose we are in the dense case $(B)$, that is $|R\cap [0,1]|_s\gtrsim s^{-1}$. By pigeonholing, we can select points $c_1, c_2, c_3, c_4 \in C^{'}$ with $|c_3 - c_4||b_2-b_3| > \delta^\kappa$ and $|c_1-c_2|\le |c_3-c_4|$ so that
\begin{equation} \label{closebsparse} \left|\bigg\{ (d_1, ..., d_4) \in {\widetilde{D}}^4\colon \Big| \frac{d_1 - d_2}{d_3 - d_4} - \frac{c_1 - c_2}{c_3 - c_4}\Big| < \delta^{1 - 2\kappa}|b_2-b_3|,\  |d_3 - d_4| > \delta^{\kappa} \bigg\}\right|  \lesssim 
|\widetilde{D}|^4 \delta^{1-2\kappa}|b_2-b_3|.
\end{equation}
Here and below, we write $d_1'-d_2'=(b_2-b_3)(c_1-c_2)$ and $d_3'-d_4'=(b_2-b_3)(c_3-c_4)$.

Observe that $|c_3-c_4|>\delta^{\kappa}=\delta^{\epsilon_0}$. Applying Lemma \ref{lem3.3} with $k=1$, the following quantity
\begin{align*}
   \big|(d_1' - d_2') \widetilde{D} + (d_3' - d_4') \widetilde{D}\big|_{\delta|b_2-b_3|} =\left|(c_1-c_2)\widetilde{D}+(c_3-c_4)\widetilde{D}\right|_\delta
\end{align*}
is bounded from above by
\begin{equation}\label{12}
  \delta^{-O(\epsilon)}\frac{|B|^8K^{16m_2+36m_1}|\log\delta|^4}{|C|^3|b_2-b_3|}\cdot |c_3-c_4|\delta^{\epsilon_0(\gamma-1)}
\end{equation}
if we use the non-concentration condition on $C$, and by
\begin{equation}\label{13}
  \delta^{-O(\epsilon)}\frac{|B|^9 K^{16m_2+36m_1}|\log\delta|^4}{|C|^4}\cdot |c_3-c_4|\delta^{\epsilon_0(\eta-1)},
\end{equation}
if we use the non-concentration condition on $B$ instead.

We will now establish a lower bound for $\big|(d_1' - d_2') \widetilde{D} + (d_3' - d_4') \widetilde{D}\big|_{\delta|b_2-b_3|}$. Define $Q \subset {\widetilde{D}}^4$ to be the set of quadruples obeying
\begin{equation} \label{quad} (d_3' - d_4') d_1 + (d_1' - d_2') d_4 = (d_3' - d_4') d_2 + (d_1' - d_2') d_3 + O(\delta|b_2-b_3|). \end{equation}
By the Cauchy-Schwarz inequality, one has $ \big|  (d_1' - d_2') \widetilde{D}+(d_3' - d_4')\widetilde{D}\big|_\delta \gtrsim  |\widetilde{D}|^4 / |Q|$. Thus our goal is now to find an upper bound for $|Q|$.  Note that equation \eqref{quad} can be written as
$$ d_1 + \frac{c_1 - c_2}{c_3 - c_4} d_4 = d_2 + \frac{c_1 - c_2}{c_3 - c_4} d_3 + O(\delta |d_3' - d_4'|^{-1}|b_2-b_3|), $$
which implies that
\begin{equation}\label{smallDifference} 
\left| \frac{d_1 - d_2}{d_3 - d_4} - \frac{c_1 - c_2}{c_3 - c_4} \right| \lesssim \delta |d_3' - d_4'|^{-1} |d_3 - d_4|^{-1}|b_2-b_3|.
\end{equation}
At this step, we consider two separate cases: 

{(i)}: At least $\frac{|Q|}{2}$ quadruples $(d_1, ..., d_4) \in Q$ satisfy $|d_3 - d_4| > \delta^{\kappa}$. For each such quadruple, from inequality \eqref{smallDifference} above, one has
$$ \left| \frac{d_1 - d_2}{d_3 - d_4} - \frac{c_1 - c_2}{c_3 - c_4} \right| \lesssim \delta^{1 - 2 \kappa}|b_2-b_3|. $$
Comparing with \eqref{closebsparse}, we see that the number of such quadruples is $\lesssim |\widetilde{D}|^4 \delta^{1- 2 \kappa}|b_2-b_3|$. Thus if at least half the quadruples $(d_1, ..., d_4)\in Q$ satisfy $|d_3 - d_4| > \delta^{\kappa}$, then 
\begin{equation*}
\big| (c_1 - c_2)\widetilde{D} + (c_3-c_4)\widetilde{D}\big|_\delta \gtrsim \frac{|\widetilde{D}|^4}{|Q|}\gtrsim\delta^{2\kappa-1}|b_2-b_3|^{-1}.
\end{equation*}
Combining with the upper bound in \eqref{12}, we have
\begin{equation*}
\delta^{-O(\epsilon)}\frac{|B|^8K^{16m_2+36m_1}|\log\delta|^4}{|b_2-b_3||C|^3}\cdot |c_3-c_4|\delta^{\epsilon_0(\gamma-1)}\gtrsim \frac{\delta^{2\kappa-1}}{|b_2-b_3|}.
\end{equation*}
This infers that
\[K^{16m_2+36m_1}\gtrsim |\log\delta|^{O(1)}\delta^{O(\epsilon)}\frac{|C|^3}{\delta^{1-2\kappa}|c_3-c_4|\delta^{\epsilon_0(\gamma-1)}|B|^8}.
\]
In other words, one obtains a lower bound for $K$, namely
\begin{equation}\label{eq:16}
K\gtrsim |\log\delta|^{O(1)}\delta^{O(\epsilon)}\delta^{-\frac{3\gamma-8\beta+1-\kappa(3-\gamma)}{16m_2+36m_1}}.
\end{equation}
Here we use the fact that $1>|c_3-c_4|>\delta^\kappa$.
Similarly, if we use the bound (\ref{13}) in place of (\ref{12}), we have 
\begin{equation}\label{eq:17}
K\gtrsim |\log\delta|^{O(1)}\delta^{O(\epsilon)}\delta^{-\frac{4\gamma-9\beta+1-\kappa(3-\eta)}{16m_2+36m_1}}.
\end{equation}

{(ii)} More than half of the quadruples $(d_1, ..., d_4) \in Q$ satisfy $|d_3 - d_4| \le \delta^{\kappa}$. We will make use of non-concentration assumptions on $C$ and $B$ again to estimate the upper bound for the number of these quadruples. As a result, we will obtain the corresponding lower bounds for $K$.

{(a) Non-concentration on $C$:} We begin by choosing elements $d_1,d_4\in \widetilde{D}$. 
Since $\widetilde{D}\subset (b_2-b_3)C$, according to the Frostman condition on $C$ and the requirement that $|d_3-d_4|\le\delta^{\kappa}$, the number of admissible $d_3$ is at most 
\begin{align*}
    |\widetilde{D}\cap B(d_4, \delta^\kappa)| \leq \frac{|C|\delta^{\kappa}}{|b_2-b_3|}\delta^{\epsilon_0(\gamma-1)}.
\end{align*}
Here we used the fact that 
$\frac{\delta^\kappa}{|b_2-b_3|}\ge\delta^\kappa=\delta^{\epsilon_0}$.

Next, observe that from \eqref{smallDifference}, $d_2$ must lie in an interval of length at most $\delta|c_3-c_4|^{-1}$. 
Notice that since $\epsilon_0+\kappa<1$, we must have
\[\frac{\delta}{|c_3-c_4||b_2-b_3|}=\frac{\delta}{|d_3'-d_4'|}<\delta^{1-\kappa}<\delta^{\epsilon_0}.\]
The Frostman condition on $C$ yields that the number of admissible $d_2$ is bounded by 
\begin{align*}
    \delta^\gamma|C||c_3-c_4|^{-\gamma}|b_2-b_3|^{-\gamma}=|d_3'-d_4'|^{-\gamma}\lesssim \delta^{-\kappa\gamma}.
\end{align*}
Thus the set of quadruples of this type has size at most
$\frac{|\widetilde{D}|^2|C|\delta^\kappa\delta^{\epsilon_0(\gamma-1)}}{|b_2-b_3|\delta^{\kappa\gamma}}$.
From \eqref{eq_card_D}, one has
\[|(c_1-c_2)\widetilde{D}+(c_3-c_4)\widetilde{D}|_\delta \gtrsim\frac{|\widetilde{D}|^4}{|Q|}\gtrsim \frac{|\widetilde{D}|^2|b_2-b_3|}{|C|\delta^{\kappa+\epsilon_0(\gamma-1)}\delta^{-\kappa\gamma}}\gtrsim\delta^{O(\epsilon)} \frac{|C||\log\delta|^{-2}|b_2-b_3|}{K^{8m_1}|B|^2}.\]
Altogether, the lower and the upper bounds from (\ref{12}) imply that
\[K^{16m_2+44m_1}\gtrsim |\log\delta|^{O(1)}\delta^{O(\epsilon)}\frac{|C|^4}{|B|^{10}}\frac{|b_2-b_3|^2}{|c_3-c_4|\delta^{\epsilon_0(\gamma-1)}}.\]
Using the fact that $|b_2-b_3||c_3-c_4|>\delta^\kappa$, one gets
\begin{equation}\label{eq:18}
K\gtrsim |\log\delta|^{O(1)}\delta^{O(\epsilon)}\delta^{-\frac{4\gamma-10\beta-\kappa(3-\gamma)}{16m_2+44m_1}}.
\end{equation}
On the other hand, if we use (\ref{13}) instead of (\ref{12}), we have
\begin{equation}\label{eq:19}
K\gtrsim |\log\delta|^{O(1)}\delta^{O(\epsilon)}\delta^{-\frac{5\gamma-11\beta-\kappa(2-\eta)}{16m_2+44m_1}}.
\end{equation}

\text{(b) Non-concentration on $B$:}\\
Similarly, for given elements $d_4\in \widetilde{D}$, the Frostman condition on $B$ implies that the number of admissible $d_3$ is at most 
$|B|\delta^{\kappa\eta}$. \\
Next, observe that from inequality \eqref{smallDifference}, we must have $d_2$ lie in an interval of length at most $\delta|c_3-c_4|^{-1}$, for any fixed $d_1\in \widetilde{D}$.
Because $\epsilon_0+\kappa<1$, one has
$\delta|c_3-c_4|^{-1}< \delta^{1-\kappa}<\delta^{\epsilon_0}$.
Hence the number of admissible $d_2$ is at most $\delta^{\eta}|B||c_3-c_4|^{-\eta}$, where we make use of the Frostman condition on $B$. \\
Altogether, the set of quadruples of this type has size at most
$\frac{|\widetilde{D}|^2|B|^2\delta^{\kappa\eta+\eta}}{|c_3-c_4|^\eta}$, which implies
\[|(c_1-c_2)\widetilde{D}+(c_3-c_4)\widetilde{D}|_\delta
\gtrsim\frac{|\widetilde{D}|^4}{|Q|}
\gtrsim \frac{|\widetilde{D}|^2|c_3-c_4|^\eta}{|B|^2\delta^{\kappa\eta+\eta}}
\gtrsim 
\delta^{O(\epsilon)}\frac{|C|^2|\log\delta|^{-2}}{|B|^4K^{8m_1}\delta^{\eta}}.\]
Then combining this estimate with the upper bound (\ref{12}), one has 
\begin{equation}\label{eq:20}
K\gtrsim|\log\delta|^{O(1)}\delta^{O(\epsilon)} \delta^{-\frac{5\gamma-12\beta+\eta-\kappa(2-\gamma)}{16m_2+44m_1}}.
\end{equation}
Using inequality (\ref{13}) instead, one has
\begin{equation}\label{eq:21}
K\gtrsim |\log\delta|^{O(1)}\delta^{O(\epsilon)}\delta^{-\frac{6\gamma-13\beta+\eta-\kappa(1-\eta)}{16m_2+44m_1}}.
\end{equation}
\subsection{Gap case}
In this section, we will suppose that we are in the gap case $(A)$. This means that there exists $r\in R\cap [0,1]$ so that either 
\begin{itemize}
    \item[(A.1)] $r/2$ is at least $s$-separated from $R$ or
    \item[(A.2)] $\frac{r+1}{2}$ is at least $s$-separated from $R$, where $s \sim \delta^{1-2\kappa}|b_2-b_3|$.
\end{itemize}
Notice that it follows from the definition of $\widetilde{D}$ that 
$r=\frac{c_1-c_2}{c_3-c_4}$
for some $c_1, c_2, c_3, c_4\in C'$ with $|c_3-c_4|\ge |c_1-c_2|$ and  $|c_3-c_4|\ge \delta^{\kappa}=\delta^{\epsilon_0}$.

In Case (A.1), we write $r/2=e_1/e_2$ with $e_1=x_1$ and $e_2=x_2+x_2$, where $x_1, x_2\in C'-C'$. 

In Case (A.2), we write $\frac{r+1}{2}$ as $e_1/e_2$ with $e_1 = x_1 + x_2,\ e_2 = x_2+x_2$, where $x_1,x_2\in C'-C'$. 

The first task in this section is to find a lower bound on $|e_1 \widetilde{D} + e_2 \widetilde{D}|_\delta$. One needs to keep in mind that  $|e_2|\sim |c_3-c_4|$. 

Define $Q \subset \widetilde{D}^4$ to be the set of quadruples obeying
\begin{equation} \label{quads} 
e_2 d_1 + e_1 d_4 = e_2 d_2 + e_1 d_3 + O(\delta). 
\end{equation}
As in the dense case, we only need to find an upper bound for $|Q|$. Since by the Cauchy-Schwarz inequality, one has $|e_1 \widetilde{D} + e_2 \widetilde{D}|_\delta \gtrsim \frac{|\widetilde{D}|}{|Q|}$. 

Dividing equation (\ref{quads}) by $e_2$ gives
\begin{equation}\label{farFromD1D2}
\left| \frac{d_1 - d_2}{d_3 - d_4} - \frac{e_1}{e_2} \right| \lesssim \delta |e_2|^{-1} |d_3 - d_4|^{-1}.
\end{equation}

Assume that there exists a quadruple $(d_1, ..., d_4) \in Q$ such that $|d_3 - d_4| \ge \delta^\kappa$. In other words, we have $ \frac{d_1 - d_2}{d_3 - d_4} \in R$. Then using the fact that $|b_2-b_3||e_2|\gtrsim \delta^{\kappa}$, equation \eqref{farFromD1D2} implies
$$ \left| \frac{d_1 - d_2}{d_3 - d_4} - \frac{e_1}{e_2} \right| \lesssim \delta^{1 - 2 \kappa}{|b_2-b_3|}. $$
Since we are in the gap case, $r=e_1/e_2$ is at least $s \sim \delta^{1-2\kappa}|b_2-b_3|$ separated from $R$, which is a contradiction. It turns out that every quadruple in $Q$ satisfies $|d_3 - d_4| \le \delta^\kappa$.

As in the dense case, by using the non-concentration assumptions on $C$ and $B$, respectively, we obtain the following bounds.

{(i) Non-concentration on $C$:}
\begin{equation}\label{eq19}
|e_1 \widetilde{D} + e_2\widetilde{D}|_\delta \gtrsim \delta^{O(\epsilon)} \frac{|C||\log\delta|^{-2}|b_2-b_3|}{K^{8m_1}|B|^2}.
\end{equation}

{(ii) Non-concentration on $B$:}
\begin{equation}\label{20}
|e_1 \widetilde{D} + e_2\widetilde{D}|_\delta \gtrsim \delta^{O(\epsilon)}\frac{|C|^2|\log\delta|^{-2}}{|B|^4K^{8m_1}|B|^2\delta^{\eta}}.
\end{equation}

In the next step, we apply Lemma \ref{lem3.3} to get upper bounds on $|e_1 \widetilde{D} + e_2\widetilde{D}|_\delta$. Then all possibilities for bounds of $K$ will be examined.

\begin{enumerate}
    \item[(a)] {First upper bound}
    
    Recall that $|e_1 \widetilde{D} + e_2\widetilde{D}|_\delta\lesssim |x_1\widetilde{D}+x_2\widetilde{D}+x_2\widetilde{D}+x_2\widetilde{D}|_\delta$.
Applying Lemma \ref{lem3.3} (i) with $k=3$, one has 
\[
|x_1\widetilde{D}+x_2\widetilde{D}+x_2\widetilde{D}+x_2\widetilde{D}|_\delta\lesssim \delta^{-O(\epsilon)}\cdot \frac{|B|^{12} K^{20m_2+46m_1}|\log\delta|^6|e_2|\delta^{\epsilon_0(\gamma-1)}}{|b_2-b_3| |C|^5}.
\]
Then one can combine with the lower bound (\ref{eq19}) to get
\begin{equation}\label{eq:27}
K\gtrsim |\log\delta|^{O(1)}\delta^{O(\epsilon)}\delta^{-\frac{6\gamma-14\beta -\kappa(3-\gamma)}{20m_2+54m_1}}.
\end{equation}
Similarly, replacing \eqref{eq19} with the lower bound \eqref{20}, one obtains
\begin{equation}\label{eq:28}
K\gtrsim |\log\delta|^{O(1)}\delta^{O(\epsilon)}\delta^{-\frac{7\gamma-16\beta+\eta-\kappa(2-\gamma)}{20m_2+54m_1}}.
\end{equation}

 \item[(b)] {Second upper bound }
    
Applying Lemma \ref{lem3.3}(ii) with $k=3$, one has 
\[ 
|x_1\widetilde{D}+x_2\widetilde{D}+x_2\widetilde{D}+x_2\widetilde{D}|_\delta\lesssim \delta^{-O(\epsilon)}\cdot \frac{|B|^{13}K^{20m_2+46m_1}|\log\delta|^6|e_2|\delta^{\kappa(\eta-1)}}{|C|^6}.
\]
Incorporating with the lower bounds (\ref{eq19}) and (\ref{20}), we have
\begin{equation}\label{eq:29}
 K\gtrsim |\log\delta|^{O(1)}\delta^{O(\epsilon)}\delta^{-\frac{7\gamma-15\beta-\kappa(2-\eta)}{20m_2+54m_1}},
 \end{equation}
 and
\begin{equation}\label{eq:30}
K\gtrsim |\log\delta|^{O(1)}\delta^{O(\epsilon)}\delta^{-\frac{8\gamma-17\beta+\eta-\kappa(1-\eta)}{20m_2+54m_1}}\,.
\end{equation}
\end{enumerate}
\subsection{Concluding the proof}
Let us summarize the lower bounds for $K$ here:
\begin{itemize}
    \item (\ref{eq:16}) $ K\gtrsim  |\log\delta|^{O(1)}\delta^{O(\epsilon)}
    \delta^{-\frac{3\gamma-8\beta+1-\kappa(3-\gamma)}{16m_2+36m_1}}$,
    \item (\ref{eq:17}) $ K\gtrsim |\log\delta|^{O(1)}\delta^{O(\epsilon)}\delta^{-\frac{4\gamma-9\beta+1-\kappa(3-\eta)}{16m_2+36m_1}}$,
    \item (\ref{eq:18}) $ K \gtrsim |\log\delta|^{O(1)}\delta^{O(\epsilon)}\delta^{-\frac{4\gamma-10\beta-\kappa(3-\gamma)}{16m_2+44m_1}} $,
    \item (\ref{eq:19}) $K \gtrsim |\log\delta|^{O(1)}\delta^{O(\epsilon)}\delta^{-\frac{5\gamma-11\beta-\kappa(2-\eta)}{16m_2+44m_1}} $,
    \item (\ref{eq:20}) $K \gtrsim |\log\delta|^{O(1)}\delta^{O(\epsilon)}\delta^{-\frac{5\gamma-12\beta+\eta-\kappa(2-\gamma)}{16m_2+44m_1}} $,
    \item (\ref{eq:21}) $K \gtrsim |\log\delta|^{O(1)}\delta^{O(\epsilon)}\delta^{-\frac{6\gamma-13\beta+\eta-\kappa(1-\eta)}{16m_2+44m_1}} $,
    \item (\ref{eq:27}) $ K \gtrsim |\log\delta|^{O(1)}\delta^{O(\epsilon)}\delta^{-\frac{6\gamma-14\beta-\kappa(3-\gamma)}{20m_2+54m_1}} $,
    \item (\ref{eq:28}) $ K\gtrsim |\log\delta|^{O(1)}\delta^{O(\epsilon)}\delta^{-\frac{7\gamma-16\beta+\eta-\kappa(2-\gamma)}{20m_2+54m_1}} $,
    \item (\ref{eq:29}) $ K\gtrsim |\log\delta|^{O(1)}\delta^{O(\epsilon)}\delta^{-\frac{7\gamma-15\beta-\kappa(2-\eta)}{20m_2+54m_1}} $,
    \item (\ref{eq:30}) $ K \gtrsim |\log\delta|^{O(1)}\delta^{O(\epsilon)}\delta^{-\frac{8\gamma-17\beta+\eta -\kappa(1-\eta)}{20m_2+54m_1}} $.
\end{itemize}
where $\kappa$ can be chosen arbitrarily in $(0, 1/2)$. 

Now we put $M_0=\delta^{-\frac{\gamma-\beta}{4}}$ and $M_1= \delta^{-\frac{\gamma+2\beta+\kappa\eta}{4}}$, which are given by \eqref{m0} and \eqref{eq_K_nonempty_R}, respectively. Then define 
\begin{align*}
    &M_2=\max\big\{ \eqref{eq:16},\eqref{eq:17}\big\} ,\\
    &M_3=\max\big\{ \eqref{eq:18},\eqref{eq:19},\eqref{eq:20},\eqref{eq:21}\big\},\\
    &M_4=\max\big\{ \eqref{eq:27},\eqref{eq:28},\eqref{eq:29},\eqref{eq:30}\big\}.
\end{align*}
Following the proof, one can see that $K$ can be bounded from below by
\begin{align*}
    K\gtrsim |\log\delta|^{O(1)}\delta^{O(\epsilon)} \min \{M_0, M_1, M_2, M_3, M_4\}.
\end{align*}
We complete the proof of Theorem \ref{thm_energy_III}.
\section{Proof of Theorem \ref{thm-G1l} and Theorem \ref{thm-G2}}

By the Cauchy-Schwarz inequality and Theorem \ref{thm_energy_III}, it can be seen easily that there exists $c\in C$ such that 
\[|A+cB|_\delta\gtrsim \frac{K|B|^{1/2}}{|A|^{1/2}}|A|.\]
Therefore, to show $|A+cB|_\delta\ge \delta^{-\epsilon'}|A|$ for some $\epsilon'>0$, we need 
\begin{align}\label{eq_KAB_deltaepsilon}
    \frac{K|B|^{1/2}}{|A|^{1/2}}\gtrsim \delta^{-\epsilon'}.
\end{align}
On the other hand, from Theorem \ref{thm_energy_III}, we have
\begin{align*}
     K\gtrsim |\log\delta|^{O(1)}\delta^{O(\epsilon)} \min \{M_0, M_1, M_2, M_3, M_4\}.
\end{align*}
We now compute the ranges of $\gamma$ corresponding to the above cases such that \eqref{eq_KAB_deltaepsilon} holds, namely,

\begin{itemize}
\item \eqref{m0} $\gamma>2\alpha-\beta$.
\item \eqref{eq_K_nonempty_R} $\gamma>2\alpha-4\beta-\kappa\eta$.
    \item \eqref{eq:16}
           $ \gamma >\frac{74\alpha-66\beta-1+3\kappa}{3+\kappa}$.
    \item \eqref{eq:17}
    $ \gamma >\frac{74\alpha-65\beta-1+\kappa(3-\eta)}{4}.    $
    \item \eqref{eq:18}
    $        \gamma >\frac{78\alpha-68\beta+3\kappa}{4+\kappa}.$
    \item \eqref{eq:19}
    $        \gamma >\frac{78\alpha-67\beta+\kappa(2-\eta)}{5}.$
    \item \eqref{eq:20}
    $       \gamma >\frac{78\alpha-66\beta-\eta+2\kappa}{5+\kappa}.$
    \item \eqref{eq:21}
    $\gamma >\frac{78\alpha-65\beta-\eta+\kappa(1-\eta)}{6}.$
    \item \eqref{eq:27}
    $\gamma >\frac{97\alpha-83\beta+3\kappa}{6+\kappa}.$
    \item \eqref{eq:28}
    $\gamma >\frac{97\alpha-81\beta-\eta +2\kappa}{7+\kappa}.$
    \item \eqref{eq:29}
    $\gamma >\frac{97\alpha-82\beta+\kappa(2-\eta)}{7}.$
    \item \eqref{eq:30}
    $\gamma >\frac{97\alpha-80\beta-\eta+\kappa(1-\eta)}{8}.$
\end{itemize}
Set $\eta=\beta$ and choose $\kappa=\kappa(\alpha, \beta)$ close to zero. 

Among (\ref{eq:18}, \ref{eq:19}, \ref{eq:20}, \ref{eq:21}), the widest range for $\gamma$ is 
\[\gamma>\frac{78\alpha-66\beta}{6},\]
which comes from (\ref{eq:21}).

Among (\ref{eq:27}, \ref{eq:28}, \ref{eq:29}, \ref{eq:30}), the widest range for $\gamma$ is 
\[\gamma>\frac{97\alpha-81\beta}{8},\]
which comes from (\ref{eq:30}).

On the other hand, since $\alpha\ge \beta$, one has 
\[\frac{78\alpha-66\beta}{6}>\frac{97\alpha-81\beta}{8}.\]

This means that we end up with the following two ranges:

\begin{enumerate}
    \item The case (\ref{m0}, \ref{eq_K_nonempty_R}, \ref{eq:16}, \ref{eq:21}, \ref{eq:30}):
    \[\gamma>\max\left\lbrace 2\alpha-\beta, \frac{74\alpha-66\beta-1}{3}, \frac{78\alpha-66\beta}{6} \right\rbrace.\]
Using the fact $\alpha\ge \beta$, we conclude that 

If $\alpha<\frac{33}{35}\beta+\frac{1}{35}$, then 
\[\gamma>\frac{78\alpha-66\beta}{6}.\]
If $\alpha>\frac{33}{35}\beta+\frac{1}{35}$, then 
\[\gamma>\frac{74\alpha-66\beta-1}{3}.\]

    \item The case (\ref{m0}, \ref{eq_K_nonempty_R}, \ref{eq:17}, \ref{eq:21}, \ref{eq:30}):
    \[\gamma>\max\left\lbrace 2\alpha-\beta, \frac{74\alpha-65\beta-1}{4}, \frac{78\alpha-66\beta}{6} \right\rbrace.\]

If $\alpha>\frac{63\beta}{66}+\frac{1}{22}$, then 
\[\gamma>\frac{74\alpha-65\beta-1}{4}.\]
If $\alpha<\frac{63\beta}{66}+\frac{1}{22}$, then 
\[\gamma>\frac{78\alpha-66\beta}{6}.\]
\end{enumerate}

Comparing between the two cases, we infer that (\ref{m0}, \ref{eq_K_nonempty_R}, \ref{eq:17}, \ref{eq:21}, \ref{eq:30}) gives the best range for $\gamma$. Namely,

If $\alpha\le \frac{63}{66}\beta+\frac{1}{22}$, then the condition 
\[\gamma>\frac{78\alpha-66\beta}{6}\]
would be enough to have (\ref{eq_KAB_deltaepsilon}). Moreover, we also have \[K\gtrsim \min \left\lbrace \delta^{-\frac{4\gamma-9\beta+1}{148}}, ~\delta^{-\frac{6\gamma-12\beta}{156}} \right\rbrace.\]

If $\alpha>\frac{63}{66}\beta+\frac{1}{22}$, then the condition 
\[\gamma>\frac{74\alpha-65\beta-1}{4}\]
would be enough to have (\ref{eq_KAB_deltaepsilon}). Similarly, we also have \[K\gtrsim \min \left\lbrace \delta^{-\frac{4\gamma-9\beta+1}{148}}, ~\delta^{-\frac{6\gamma-12\beta}{156}} \right\rbrace.\]
With these computations, we are ready to prove Theorem \ref{thm-G1l} and Theorem \ref{thm-G2}. Let us first recall the statements.

\begin{theorem}[Theorem \ref{thm-G1l}]
 Let $\alpha, \beta, \eta \in (0,1)$, $\beta\le \alpha\le (21\beta+1)/22$. Then, for every $\gamma\in ((78\alpha-66\beta)/6, 1]$, there exist $\epsilon_0, \delta_0\in (0, 1/2]$, depending only on $\alpha, \beta, \gamma, \eta$, such that the following holds. Let $\delta\in 2^{-\mathbb{N}}$ with $\delta\in (0, \delta_0]$, and let $A, B\subset [0, 1]$ be $\delta$-separated sets satisfying the following hypotheses: 
\begin{itemize}
    \item $|A|\le  \delta^{-\alpha}$
    \item $|B|\ge \delta^{-\beta}$, and $B$ satisfies the following Frostman condition 
    \[|B\cap B(x, r)|\le r^\eta ~ |B|, ~~ \forall x\in \R, \delta\le r\le \delta^{\epsilon_0}.\]
\end{itemize}
Further, let $C\subset [1/2, 1]$ be a $\delta$-separated set with $|C\cap B(x, r)|\lesssim r^\gamma~ |C|$ for all $x\in \mathbb{R}$ and $0<r<\delta^{\epsilon_0}$. Then, there exists $c\in C$ such that the following holds for any $\epsilon$ satisfying
\[0<\epsilon<\min \left\lbrace \frac{4\gamma-74\alpha+65\beta+1}{444}, ~\frac{6\gamma-78\alpha+66\beta}{468} \right\rbrace.\]
If $G\subset A\times B$ is any subset with $|G|\ge \delta^\epsilon |A||B|$, then  
\[|\pi_c(G)|_\delta\ge \delta^{-\epsilon}|A|.\]
\end{theorem}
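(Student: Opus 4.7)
The plan is to combine Theorem \ref{thm_energy_III} with a standard Cauchy--Schwarz step in order to turn the energy bound into a covering bound on $\pi_c(G)$, and then to optimize the resulting inequality over the ten lower bounds on $K$ that are packaged in $M_0,\ldots,M_4$.

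For each $c\in C$, the Cauchy--Schwarz inequality applied to the map $(a,b)\mapsto a+cb$ on $G$ gives
$$ |G|^2 \le |\pi_c(G)|_\delta \cdot E_\delta(A,cB),$$
where
$$ E_\delta(A,cB) := \big|\{(a_1,a_2,b_1,b_2)\in A^2\times B^2 : |(a_1+cb_1)-(a_2+cb_2)|\le \delta\}\big|_\delta;$$
the enlargement from $G$ to $A\times B$ is justified by $G\subset A\times B$. Summing the identity defining $K$ over $c\in C$ and averaging yields a single $c^* \in C$ with $E_\delta(A,c^*B) \lesssim \tfrac{1}{K}|A|^{3/2}|B|^{3/2}$, and the hypothesis $|G|\ge \delta^\epsilon|A||B|$ then gives
$$ |\pi_{c^*}(G)|_\delta \gtrsim K\,\delta^{2\epsilon}\,|A|^{1/2}|B|^{1/2}.$$
Using $|A|\le\delta^{-\alpha}$ and $|B|\ge\delta^{-\beta}$, the target conclusion $|\pi_{c^*}(G)|_\delta \ge \delta^{-\epsilon}|A|$ therefore reduces to the single scalar requirement $K \gtrsim \delta^{-3\epsilon-(\alpha-\beta)/2}$.

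Next I would feed this into Theorem \ref{thm_energy_III} with the choices $\eta = \beta$ and $\kappa=\epsilon_0$ arbitrarily small, so that each $M_i$ becomes a clean power of $\delta$; the factor $|\log\delta|^{O(1)}$ and the $\delta^{O(\epsilon_E)}$ slack inherited from Lemma \ref{lm21} are absorbed by choosing the internal parameter $\epsilon_E$ sufficiently small compared with the final $\epsilon$. Each of the ten bounds then translates into an upper bound on the admissible $\epsilon$, and the effective range is their minimum. Using $\alpha\ge\beta$ together with the hypothesis $\gamma>(78\alpha-66\beta)/6$ (which in particular forces $\gamma>2\beta$), pairwise comparison shows that the binding constraints are (\ref{eq:17}), yielding $\epsilon < (4\gamma-74\alpha+65\beta+1)/444$, and (\ref{eq:21}), yielding $\epsilon < (6\gamma-78\alpha+66\beta)/468$. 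The two resulting thresholds on $\gamma$, namely $(74\alpha-65\beta-1)/4$ and $(78\alpha-66\beta)/6$, coincide at $\alpha=(21\beta+1)/22$, and for $\alpha$ below this value the second is the larger, giving precisely the hypothesis on $\gamma$ stated in the theorem.

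The anticipated obstacle is this optimization step: rigorously verifying that (\ref{eq:17}) and (\ref{eq:21}) are the binding constraints among the ten reduces to a finite collection of pairwise linear inequalities in $\alpha,\beta,\gamma$, each of which follows from $\alpha\ge\beta$ and $\gamma>2\beta$, but the list must be inspected case by case. Once this bookkeeping is complete, the theorem follows at once from Theorem \ref{thm_energy_III} via the Cauchy--Schwarz argument above.
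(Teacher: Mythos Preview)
Your proposal is correct and follows essentially the same route as the paper: reduce the projection bound to an energy inequality via Cauchy--Schwarz, invoke Theorem \ref{thm_energy_III}, and then carry out the identical bookkeeping (set $\eta=\beta$, send $\kappa\to 0$, and identify (\ref{eq:17}) and (\ref{eq:21}) as the binding constraints). The only cosmetic difference is that the paper argues by contradiction---defining the set $X\subset C$ of ``bad'' $c$ and showing $|X|<|C|$---whereas you directly pick a single $c^*$ of minimal energy; your formulation is slightly cleaner since it makes explicit that one $c^*$ works for every $G$ and every $\epsilon$ in the stated range simultaneously.
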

\begin{theorem}[Theorem \ref{thm-G2}]
 Let $\alpha, \beta, \eta \in (0,1)$, $\beta\le \alpha$ and $\alpha>(21\beta+1)/22$. Then, for every $\gamma\in ((74\alpha-65\beta-1)/4, 1]$, there exist $\epsilon_0, \delta_0\in (0, 1/2]$, depending only on $\alpha, \beta, \gamma, \eta$, such that the following holds. Let $\delta\in 2^{-\mathbb{N}}$ with $\delta\in (0, \delta_0]$, and let $A, B\subset [0, 1]$ be $\delta$-separated sets satisfying the following hypotheses: 
\begin{itemize}
    \item $|A|\le  \delta^{-\alpha}$
    \item $|B|\ge \delta^{-\beta}$, and $B$ satisfies the following Frostman condition 
    \[|B\cap B(x, r)|\le r^\eta ~ |B|, ~~ \forall x\in \R, \delta\le r\le \delta^{\epsilon_0}.\]
\end{itemize}
Further, let $C$ be a $\delta$-separated set in $[1/2, 1]$ with $|C\cap B(x, r)|\lesssim r^\gamma~ |C|$ for all $x\in \mathbb{R}$ and $0<r<\delta^{\epsilon_0}$. Then, there exists $c\in C$ such that the following holds for any $\epsilon$ satisfying
\[0<\epsilon<\min \left\lbrace \frac{4\gamma-74\alpha+65\beta+1}{444}, ~\frac{6\gamma-78\alpha+66\beta}{468} \right\rbrace.\]
If $G\subset A\times B$ is any subset with $|G|\ge \delta^\epsilon |A||B|$, then  
\[|\pi_c(G)|_\delta\ge \delta^{-\epsilon}|A|.\]
\end{theorem}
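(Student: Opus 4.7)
The plan is to deduce Theorem~\ref{thm-G2} from the energy estimate Theorem~\ref{thm_energy_III} by a standard quantitative Cauchy--Schwarz reduction, followed by an explicit case analysis identifying which of the ten lower bounds $M_0,\ldots,M_4$ on $K$ is binding in the regime $\alpha>(21\beta+1)/22$.

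First, I would perform the Cauchy--Schwarz reduction. Writing $E_c(G):=|\{((a_1,b_1),(a_2,b_2))\in G^2:|(a_1+cb_1)-(a_2+cb_2)|\le\delta\}|_\delta$ for each $c\in C$, one has $|\pi_c(G)|_\delta\ge |G|^2/E_c(G)$, and summing over $c$ together with a routine Cauchy--Schwarz step yields
\[
\max_{c\in C}|\pi_c(G)|_\delta\;\gtrsim\;\frac{|G|^2|C|}{\sum_{c\in C}E_c(G)}\;\ge\;\frac{|G|^2\,K}{|A|^{3/2}|B|^{3/2}}.
\]
Plugging in the hypothesis $|G|\ge\delta^\epsilon|A||B|$ and the goal $|\pi_c(G)|_\delta\ge \delta^{-\epsilon}|A|$ reduces the theorem to verifying that $K\gtrsim \delta^{-3\epsilon-(\alpha-\beta)/2}$.

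Next, I would feed Theorem~\ref{thm_energy_III} with $\eta=\beta$ and let $\kappa=\epsilon_0\to 0$, using the Balog--Szemer\'edi--Gowers constants $m_1=1,\,m_2=7$ from the remark after Theorem~\ref{thm2.3}, so that each $M_i$ becomes an explicit power of $\delta$ in $\alpha,\beta,\gamma$. The requirement $K\gtrsim\delta^{-(\alpha-\beta)/2}$ then translates each of the eleven bounds \eqref{m0}, \eqref{eq_K_nonempty_R}, \eqref{eq:16}--\eqref{eq:30} into an explicit lower bound on $\gamma$, as already listed in the preceding computation. Among the group \eqref{eq:18}--\eqref{eq:21} the widest admissible range comes from \eqref{eq:21}, namely $\gamma>(78\alpha-66\beta)/6$, and similarly \eqref{eq:30} dominates \eqref{eq:27}--\eqref{eq:29}; since $\alpha\ge\beta$ the former is more stringent than the latter. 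The key remaining comparison is between \eqref{eq:17} and \eqref{eq:21}, and a short algebraic check shows
\[
\tfrac{74\alpha-65\beta-1}{4}\;>\;\tfrac{78\alpha-66\beta}{6}\quad\Longleftrightarrow\quad\alpha\;>\;\tfrac{21\beta+1}{22},
\]
which is precisely the hypothesis of Theorem~\ref{thm-G2}. In this regime \eqref{eq:17} is therefore the binding constraint, and the hypothesis $\gamma>(74\alpha-65\beta-1)/4$ delivers $K\gtrsim\delta^{-(\alpha-\beta)/2}$; the trivial bounds \eqref{m0} and \eqref{eq_K_nonempty_R} are automatically satisfied in this range.

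Finally, solving $K\ge\delta^{-(\alpha-\beta)/2-3\epsilon}$ using the two surviving estimates \eqref{eq:17} and \eqref{eq:21} produces exactly the range
\[
0<\epsilon<\min\Big\{\tfrac{4\gamma-74\alpha+65\beta+1}{444},\;\tfrac{6\gamma-78\alpha+66\beta}{468}\Big\}.
\]
The main obstacle is the combinatorial case analysis isolating \eqref{eq:17} as the binding bound: ten competing exponents must be compared simultaneously, and the transition at $\alpha=(21\beta+1)/22$ must be verified algebraically to confirm that \eqref{eq:17} overtakes \eqref{eq:21} exactly above this threshold. Everything else---the Cauchy--Schwarz reduction, the specialisation $\eta=\beta$, and the final conversion into an admissible $\epsilon$-range---is routine once Theorem~\ref{thm_energy_III} is in hand.
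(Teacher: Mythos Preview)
Your case analysis and the numerical reduction to the bounds \eqref{eq:17} and \eqref{eq:21} are correct and match the paper exactly, including the identification of the threshold $\alpha=(21\beta+1)/22$ and the final $\epsilon$-range. The only genuine gap is in your Cauchy--Schwarz step: as written, you fix a single $G$ and obtain
\[
\max_{c\in C}|\pi_c(G)|_\delta \;\gtrsim\; \frac{|G|^2\,K}{|A|^{3/2}|B|^{3/2}},
\]
which yields a $c$ depending on $G$. That proves only $\forall G\,\exists c$, whereas the theorem asserts $\exists c\,\forall G$ --- a single $c$ must work simultaneously for every large $G\subset A\times B$.

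The paper's proof handles the quantifier correctly by arguing contrapositively: let $X\subset C$ be the set of ``bad'' $c$'s, and for each $c\in X$ pick a witness $G_c$ with $|G_c|\ge\delta^{\epsilon}|A||B|$ and $|\pi_c(G_c)|_\delta<\delta^{-\epsilon}|A|$. Then $E_c(G_c)\ge |G_c|^2/|\pi_c(G_c)|_\delta\ge \delta^{3\epsilon}|A||B|^2$, and since $E_c(G_c)\le E_c(A\times B)$ one sums over $c\in X$ against the energy of the full product to obtain $|X|\le \delta^{-3\epsilon}\frac{|A|^{1/2}}{|B|^{1/2}K}|C|<|C|$, whence a good $c$ exists. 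This is a one-line repair of your argument, and once made, the remainder of your proposal coincides with the paper's proof.
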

The proofs of these two theorems are the same, so we only present a proof for the first one. Again, the proof is short and follows directly from the energy estimates. 
\begin{proof}[Proof of Theorem \ref{thm-G1l}]
Let $X$ be the set of $c\in C$ such that the conclusion of the theorem fails, i.e. for each $c\in X$, there exists $G_c\subset A\times B$ with $|G_c|\ge \delta^\epsilon |A||B|$ and $|\pi_c(G)|_\delta< \delta^{-\epsilon}|A|$. We want to show that $|X|<|C|$.

For each $c\in X$, by the Cauchy-Schwarz inequality, the number of tuples $(x, y, z, w)\in G_c^2$ such that $|(x+cy)-(z+cw)|\le \delta$ is at least 
$\frac{|G_c|^2}{\delta^{-\epsilon}|A|}$, which equals $\delta^{3\epsilon}|A||B|^2$.
Summing over all $c\in X$ and using Theorem \ref{thm_energy_III}, one has 
\begin{align*}
    |X|\delta^{3\epsilon}|A||B|^2\le \frac{|C||A|^{3/2}|B|^{3/2}}{K}.
\end{align*}
This infers
\[|X|\le \delta^{-3\epsilon}\frac{|A|^{1/2}}{|B|^{1/2}K}|C|.\]
Using the computations above, we know that 
\[\frac{K|B|^{1/2}}{|A|^{1/2}}\gtrsim \min \left\lbrace \delta^{-\frac{4\gamma-74\alpha+65\beta+1}{148}}, ~\delta^{-\frac{6\gamma-78\alpha+66\beta}{156}} \right\rbrace.\]
Thus, 
\[\delta^{-3\epsilon}\frac{|A|^{1/2}}{|B|^{1/2}K}\lesssim \delta<1\]
as long as 
\[\epsilon<\min \left\lbrace \frac{4\gamma-74\alpha+65\beta+1}{444}, ~\frac{6\gamma-78\alpha+66\beta}{468} \right\rbrace.\]
This completes the proof of the theorem.
\end{proof}

\section{Proof of Theorem \ref{thm_energy_II} and Theorem \ref{a+cba41}}
As mentioned in the introduction, to prove Theorem \ref{thm_energy_II}, we make use of the following point-tube incidence bound due to Dabrowski, Orponen, and Villa in \cite{DOV}. 

\begin{theorem}\label{t:incidences} Let $0 < n < d$ and $M\geq 1$. Let $\mathcal{V} \subset \mathcal{A}(d,n)$ be a $\delta$-separated set of $n$-planes, and let $P \subset B(1) \subset \R^{d}$ be a $\delta$-separated $(\delta,t, M)$-set with $t > d - n$, i.e.
\[|P\cap B(x, r)|_\delta \le M r^t|P|_\delta, ~\forall  ~x\in \mathbb{R}^d, ~r>0.\]
For $r>0$, define $\mathcal{I}_{r}(P,\mathcal{V}) = \{(p,V) \in P \times \mathcal{V} : p \in V(r)\}$. Then we have
	\begin{equation*}
		|\mathcal{I}_{M\delta}(P,\mathcal{V})| \lesssim_{M, d,t}  |P| \cdot |\mathcal{V}|^{n/(d+n-t)} \cdot \delta^{n(t+1-d)(d-n)/(d+n-t)}.
	\end{equation*}
\end{theorem}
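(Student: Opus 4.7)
The plan is to prove the bound by induction on the number of dyadic scales $N=\lceil\log_2(1/\delta)\rceil$, via a two-scale decomposition in the spirit of Dabrowski, Orponen, and Villa. Writing $\alpha=n/(d+n-t)$ and $\beta=n(t+1-d)(d-n)/(d+n-t)$, the target estimate reads $|\mathcal{I}_{M\delta}(P,\mathcal{V})|\lesssim_{M,d,t}|P|\cdot|\mathcal{V}|^{\alpha}\cdot\delta^{\beta}$. The base case $\delta\gtrsim 1$ follows from the trivial bound $|\mathcal{I}|\le|P|\cdot|\mathcal{V}|\lesssim_d|P|$, and the inductive hypothesis assumes the bound holds at all strictly larger scales with a uniform constant depending polylogarithmically on $N$.

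For the inductive step, fix an intermediate scale $\rho\in(\delta,1)$. Partition $\mathcal{V}$ into clusters by the equivalence relation ``$V,V'$ lie inside a common $\rho$-thick slab of $B(1)$'': each cluster $T$ is confined to a single $\rho$-thick tube $T_\rho$ and has cardinality at most $(C\rho/\delta)^{(n+1)(d-n)}$, the maximum number of $\delta$-separated $n$-planes fitting inside a $\rho$-thick $n$-plane (reflecting the dimension $(n+1)(d-n)$ of $\mathcal{A}(d,n)$). Let $\mathcal{T}$ denote the resulting $\rho$-separated family of clusters; incidences decompose as
\[
|\mathcal{I}_{M\delta}(P,\mathcal{V})| \;=\; \sum_{T\in\mathcal{T}}\,\bigl|\mathcal{I}_{M\delta}(P\cap T_\rho,\,T)\bigr|.
\]
Apply the inductive hypothesis twice: first at the coarser scale $\rho$ to the pair $(P,\mathcal{T})$ to bound how the points spread among the $\rho$-thick tubes (here $P$ retains its $(\delta,t,M)$-set structure); then, for each slab $T_\rho$, rescale the slab to unit size and apply induction at the finer rescaled scale $\delta/\rho$ to count incidences between the $\lesssim(\rho/\delta)^{(n+1)(d-n)}$ rescaled planes of the cluster and the rescaled points, which inherit a $(\delta/\rho,t,O(M))$-set condition via $|P\cap T_\rho\cap B(x,r)|_\delta\le Mr^t|P|_\delta$. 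After a standard dyadic pigeonholing on the cluster population $|T|$ (costing a harmless $|\log\delta|^{O(1)}$ factor), multiplying the two bounds yields an estimate of the form $|\mathcal{I}|\le C\,|P|\,|\mathcal{V}|^{\alpha}\delta^{\beta}\rho^{F(\alpha,\beta,n,d,t)}$, where $F$ is linear in the exponents. The definitions of $\alpha$ and $\beta$ are precisely those which make $F\equiv 0$ at the balancing $\rho$, so the bound becomes independent of $\rho$; the hypothesis $t>d-n$ guarantees that this balancing $\rho$ indeed falls in $(\delta,1)$ and that the exponents are non-trivial.

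The main obstacle is controlling the Frostman constant $M$ through the iteration. Each inductive call (both at scale $\rho$ on $(P,\mathcal{T})$ and at scale $\delta/\rho$ inside a rescaled slab) can inflate the effective $M$, and a priori this inflation could compound across the $N\asymp|\log\delta|$ recursion levels. One must verify that the per-level inflation is at most polylogarithmic in $N$, so the total accumulated loss after $N$ iterations is $|\log\delta|^{O(1)}$ and hence absorbed into the $\lesssim_{M,d,t}$ notation. A secondary subtlety is that the Frostman condition for $P\cap T_\rho$ after rescaling must be verified in all balls, including those not centered in $T_\rho$; this follows from intersecting the defining Frostman inequality for $P$ with $T_\rho$, but requires a careful book-keeping because the restriction $|P\cap T_\rho|_\delta$ may be much smaller than $|P|_\delta$, forcing one to renormalize the Frostman bound by the actual size of $P\cap T_\rho$ at the cost of precisely the logarithmic losses already budgeted.
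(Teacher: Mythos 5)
First, a point of orientation: the paper does not prove this statement at all --- Theorem \ref{t:incidences} is quoted verbatim from Dabrowski--Orponen--Villa \cite{DOV}, where it is deduced from an $L^p$-integrability estimate for orthogonal projections of Frostman measures, combined with a duality/H\"older argument at scale $\delta$; it is not proved by induction on scales. So your proposal is necessarily a different route, and the question is whether it works on its own. As written, it does not. The central difficulty you relegate to ``book-keeping'' is in fact fatal to the naive two-scale scheme: when you restrict $P$ to a $\rho$-slab and rescale, the inherited bound is $|P\cap T_\rho\cap B(x,r)|_\delta\le Mr^t|P|_\delta$, i.e.\ a Frostman bound relative to $|P|_\delta$, whereas the inductive hypothesis needs one relative to $|P\cap T_\rho|_\delta$; the ratio $|P|_\delta/|P\cap T_\rho|_\delta$ can be a power of $1/\delta$, not a $|\log\delta|^{O(1)}$ factor. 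The same problem appears at the coarse scale: a $\rho$-net of a $(\delta,t,M)$-set need not be a $(\rho,t,O(M))$-set relative to its own $\rho$-covering number, and converting net-slab incidences into $\sum_T|P\cap T_\rho|_\delta$ multiplies by the \emph{maximal} number of $P$-points per $\rho$-ball, which exceeds the average by an arbitrary power in general. Dyadic pigeonholing on cluster populations does not repair this, because the resulting sub-families no longer satisfy the hypotheses of the statement you are trying to apply inductively.

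There is also a structural gap independent of these losses: your balancing claim (``the definitions of $\alpha$ and $\beta$ are precisely those which make $F\equiv 0$'') is never computed, and in fact the two-scale composition is exponent-neutral. If one grants all the uniformity you need for free, the product of the coarse-scale bound $|P|_\rho|\mathcal{T}|^{\alpha}\rho^{\beta}$ (times per-ball multiplicity) with the fine-scale bound $|P\cap T_\rho|\,|T|^{\alpha}(\delta/\rho)^{\beta}$ reproduces the shape $|P|\,|\mathcal{V}|^{\alpha}\delta^{\beta}$ for \emph{any} pair $(\alpha,\beta)$, and the base case $\delta\sim 1$ is trivially true for any exponents as well; so the induction as designed cannot single out $\alpha=n/(d+n-t)$ and $\beta=n(t+1-d)(d-n)/(d+n-t)$, nor does it use the hypothesis $t>d-n$ in any essential way. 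The actual content of the theorem must enter through some genuine geometric input at a single scale --- in \cite{DOV} this is the $L^{p}$ bound for $\pi_V\mu$ with $p$ determined by $t>d-n$, applied to the normalized counting measure on $P$ and paired with the indicator functions of the plates $V(M\delta)$ via H\"older over $\mathcal{V}$. Your sketch contains no substitute for that input, so the claimed exponents are not derived but assumed.
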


Here, the Grassmannian $\mathcal{A}(d, n)$ is equipped with the metric $d_\mathcal{A}$ defined as follows. For $V, W\in \mathcal{A}(d, n)$, let $V_0$, $W_0$, and $a\in V_0^\perp, b\in W_0^\perp$ be unique subspaces and vectors such that 
\[V=V_0+a, ~W=W_0+b.\]
The distance between $V$ and $W$ is defined by 
\[d_{\mathcal{A}}(V, W)=||\pi_{V_0}-\pi_{W_0}||_{op}+|a-b|,\]
where $||\cdot||_{op}$ is the operator norm.

In the plane, a direct computation shows that the distance between two lines $y=ax+b$ and $y=cx+d$ is 
\[\left\vert \frac{(a, -1)}{|(a, -1)|}-\frac{(c, -1)}{|(c, -1)|}\right\vert+\left\vert \frac{b}{|(a, -1)|}-\frac{d}{|(c, -1)|} \right\vert.\]

\begin{proof}[Proof of Theorem \ref{thm_energy_II}]
The first part is identical with Theorem \ref{thm_energy_I}. For the second part, since $A, B, C$ are $\delta$-separated, the number of tuples $(a_1, a_2, b_1, b_2, c)\in A^2 \times B^2\times C$ such that \[|(a_1+cb_1)-(a_2+cb_2)|\le \delta\]
is $|A|^{3/2}|B|^{3/2}|C|K^{-1}$. 

For a fixed $a_2$, the expression $|(a_1-a_2)-c(b_2-b_1)|\le \delta$ infers that the point $(b_2, a_1-a_2)$ belongs to the $\delta$-neighborhood of the line defined by $y=c(x-b_1)$. Let $L$ be the set of such lines.

Since $C$ and $B$ are $\delta$-separated, one can directly check using the above metric to get the set $L$ is $c_0\delta$ separated for some absolute constant $c_0>0$.

Set $P=B\times (A-a_2)$. Since $B$ and $A$ are $\delta$-separated and 
\[|B\cap B(x, r)|\le Mr^\beta|B|,\]
and 
\[|A\cap B(x, r)|\le Mr^\alpha|A|,\]
for all $\delta\le r\le 1$. Therefore we have 
\[|P\cap B(x, r)|\le M^2r^{\alpha+\beta}|P|.\]

If $\alpha+\beta>1$, then we can apply Theorem \ref{t:incidences} to obtain 
\[|I_{M\delta}(P, L)\lesssim |P||L|^{\frac{1}{3-\alpha-\beta}}\delta^{\frac{\alpha+\beta-1}{3-\alpha-\beta}}=|A||B|(|B||C|)^{\frac{1}{3-\alpha-\beta}}\delta^{\frac{\alpha+\beta-1}{3-\alpha-\beta}}.\]
Summing over all $a_2\in A$, we obtain 
\[|A|^{3/2}|B|^{3/2}|C|K^{-1}\lesssim |A|^2|B|(|B||C|)^{\frac{1}{3-\alpha-\beta}}\delta^{\frac{\alpha+\beta-1}{3-\alpha-\beta}}.\]
This gives 
\[K\gtrsim \delta^{\frac{\alpha-3\beta-4\gamma+2\gamma(\alpha+\beta)-\alpha^2+\beta^2+2}{2(3-\alpha-\beta)}},\]
concluding the proof.
\end{proof}
Theorem \ref{a+cba41} follows immediately from Theorem \ref{thm_energy_II} and the Cauchy-Schwarz inequality. So we omit the details.
\section{Acknowledgements}
We would like to thank Toumas Orponen for a number of comments on the first draft of this paper. 

T. Pham would like to
thank to the VIASM for the hospitality and for the excellent working conditions. 

T. Pham was supported by Vietnam National Foundation for Science and Technology Development (NAFOSTED) under grant number 101.99--2021.09.
C-Y Shen was partially supported by NSTC grant 111-2115-M-002-010-MY5. 

\textbf{Declaration of interests}: The authors do not work for, advise, own shares in, or receive funds from any organisation that could benefit from this article, and have declared no affiliation other than their research organisations

\end{document}